\title{On the \texorpdfstring{$K$}{K}-theory of algebraic tori}
\author{Qingyuan Bai \and Shachar Carmeli \and Branko Juran\and Florian Riedel}
\begin{document}

\maketitle
\begin{abstract}
    Given an algebraic torus $T$ over a field $F$, its lattice of characters 
    $\Lambda$ gives rise to a topological torus $\topTor{T}=\Lambda_{\mathbb R}/\Lambda$ 
    with a continuous action of the absolute Galois group $G$.
    We construct a natural equivalence between the algebraic $K$-theory $K_{\ast}(T)$ and the
    equivariant homology $H^{G}_{\ast}(\topTor{T};K_G(F))$ of the topological torus $\topTor{T}$ with coefficients in the
    $G$-equivariant $K$-theory of $F$. 
    This generalizes a computation of $K_0(T)$ due to Merkurjev and Panin.
    We obtain this equivalence by analyzing the motive $\KGL_{\baf}^{T}$ in the stable motivic category $\SH{\baf}$ of Voevodsky and Morel, where $\KGL_{\baf}$ is the motivic spectrum
    representing homotopy $K$-theory. We construct a natural comparison map 
    $\Four\colon \KGL_{\baf}[\deloop{\Lambda}] \to \KGL_{\baf}^{T}$ from the $\KGL_{\baf}$-homology of the \'etale delooping of $\Lambda$ to $\KGL_{\baf}^{T}$ as a special case of a motivic Fourier 
    transform and prove that it is an equivalence by using a motivic Eilenberg--Moore formula
    for classifying spaces of tori.
\end{abstract}

\tableofcontents
\newpage
\section{Introduction}

The \enquote{fundamental theorem} of algebraic $K$-theory, due to Quillen \cite{qui72},
asserts that for a field~$\baf$, the $K$-theory of the multiplicative group $\GG_{m,F}$ is given by $K_{\ast}(\GG_{m,F}) \simeq K_{\ast}(F) \oplus K_{\ast-1}(F)$.
Put differently, the $K$-groups are computed by the homology
\begin{equation}\label{eq: Quillens comp}
 K_{\ast}(\GG_{m,F})\simeq H_{\ast}(S^1; K(F))
\end{equation}
of the circle $S^1$ with coefficients in the $K$-theory spectrum of $F$.

We generalize this formula to algebraic tori which are non-split. 
Before we set up the statement, let us first be explicit about what our objects of study are.

\begin{ter}
Let $F$ be a field and $T$ a commutative group scheme over $F$. Then $T$ is called an
\tdef{algebraic torus} if it is isomorphic
to a split torus $T_E\simeq \GG_{m,E}^{\times n}$ for some $n\geq 1$ after base change
along a finite Galois extension $E/F$. In this situation, $n$ is called the \tdef{rank} and 
the field extension $E/F$ is called a \tdef{splitting field} of the torus $T$. 
\end{ter}

Given a rank $n$ torus $T/F$ with a choice of splitting field $E$ and Galois group 
$G=\mathrm{Gal}(E/F)$, we can attach an algebraic invariant to $T$ as follows:
The \tdef{character lattice} of $T$ is the free abelian
group of rank $n$ with $G$-action defined as
\[ \mdef{\cow{T}}\coloneq \hom(T_E, \GG_{m,E})\,.\]
Namely, a class $\chi \in \cow{T}$ is given by a character $\chi \colon T_E\to E^\times$
and $G$ acts via the action on $E$. To extend Quillen's computation 
\eqref{eq: Quillens comp} to non-split tori, we make the following replacements:

\begin{enumerate}
    \item The role of the space $S^1$ will be played by the \tdef{topological mirror}
          of $T$, namely the topological space with $G$-action given by the quotient
          space
          \[ \mdef{\topTor{T}}\coloneq (\cow{T} \otimes_{\ZZ}\RR)/\cow{T}\,.\]
    \item The role of the coefficients $K(F)$ will be played by the 
          \tdef{$G$-equivariant $K$-theory} of $F$ which we denote by \mdef{$K_G(F)$}. 
          This is a genuine $G$-spectrum representing the equivariant cohomology theory
          which, for any normal subgroup $H\subseteq G$, evaluates on the orbits as
          $H^{\ast}(G/H;K_G(F))= K_{\ast}(E^H)$.\footnote{See~\cite[\href{https://arxiv.org/pdf/1404.0108}{Ex. D.24}]{bar17} as well as \cref{defn: G equiv K theory} for a precise construction of
          $K_G(F)$.}
\end{enumerate}

Indeed, for $\t=\Gm$, the character lattice is given by $\ZZ$ with no action and we see that
$\topTor{\t}= S^1$. For a non-split example, the torus over $\R$
defined by the equation $x^2+y^2=1$ is a non-split torus of rank one and has topological mirror
given by $S^1$ equipped with the $C_2$-action induced by complex conjugation on $S^1 \subseteq \CC$.

With all players in place, we can state the most digestible version of our main theorem.

\begin{thmx}[\Cref{equivariant mainthm}]\label{thm: thm A intro}
Let $\t/F$ be an algebraic torus with splitting field $E$ and Galois group
$G=\mathrm{Gal}(E/F)$.
We have a natural equivalence of graded rings 
\[H_\ast^G(\topTor{\t};\eqK(F)) \xrightarrow{\sim} K_\ast(\t)\]
between the $G$-equivariant homology of the topological mirror $\topTor{\t}$ with coefficients
in $\eqK(F)$ and the algebraic $K$-theory of $\t$.
\end{thmx}

Here, the ring structure on $H_{\ast}^G(\topTor{\t};\eqK(F))$ is given by 
convolution, i.e.~induced by the natural $\GalG$-equivariant Lie group structure on the torus $\topTor{\t}$. 

This provides a method for computing $K_{\ast}(T)$ in terms of the $K$-theory
of all intermediate extensions $F\subseteq L \subseteq E$ via any $G$-equivariant cell
structure on the topological torus $\topTor{T}$. 
Moreover, for $\ast = 0$, the explicit formula for Bredon homology combined with Hilbert 90,
recovers the following result of Merkurjev and Panin~\cite{mp97}, therein
stated in terms of generators and relations.

\begin{cor}[\cref{K_0 computation}] \label{corintro}
    Let $\t$ be an algebraic torus defined over a field $F$ with splitting field $E$ and
    Galois group $G=\mathrm{Gal}(E/F)$. There is natural equivalence 
    \[ H_0^G(\topTor{\t};\underline{\Z}) \xrightarrow{\sim} K_0(T) \] 
    between the $0$-th integral Bredon homology of $\topTor{\t}$ and the algebraic $K$-theory group $K_0(T)$. 
\end{cor}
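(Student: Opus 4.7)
The plan is to derive this directly from \Cref{thm: thm A intro} by specializing to degree $\ast = 0$ and then replacing the spectrum coefficients $K_G(F)$ by its zero-truncated coefficient system. The main theorem immediately produces a natural isomorphism
\[
K_0(T) \;\cong\; H_0^G\bigl(\topTor{T}; K_G(F)\bigr),
\]
so the remaining work is to identify this right-hand side with $H_0^G(\topTor{T}; \underline{\Z})$.

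For this, I would invoke the Bredon (equivariant Atiyah--Hirzebruch) spectral sequence
\[
E^2_{p,q} = H_p^G\bigl(\topTor{T}; \underline{\pi}_q K_G(F)\bigr) \;\Longrightarrow\; H_{p+q}^G\bigl(\topTor{T}; K_G(F)\bigr),
\]
whose only contribution in total degree zero is $E^2_{0,0}$, since $\pi_q K(E^H) = 0$ for $q<0$. Equivalently, the Postnikov truncation $K_G(F) \to H\underline{\pi}_0 K_G(F)$ induces an isomorphism on $H_0^G$, reducing the problem to identifying the coefficient system $\underline{\pi}_0 K_G(F)$.

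Next I would compute this coefficient system from the construction of the genuine $G$-equivariant $K$-theory spectrum used in the paper: its value on the orbit $G/H$ is $\pi_0 K(E^H) = \Z$, generated by the class of the structure sheaf, and the restriction maps are induced by the field extensions $E^H \hookrightarrow E^{H'}$ for $H' \subseteq H$, which act as the identity on these generators. Therefore $\underline{\pi}_0 K_G(F) \cong \underline{\Z}$, giving the desired equivalence.

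The main subtlety, if $\underline{\Z}$ is interpreted as a full Mackey functor rather than merely a coefficient system, is checking that the transfer maps $\mathrm{tr}^{H}_{H'} \colon K_0(E^{H'}) \to K_0(E^H)$ correspond to multiplication by $[H:H']$ and thus match the transfers in the constant Mackey functor $\underline{\Z}$; this is essentially the statement that the transfer on $K_0$ of a finite field extension sends $[\mathcal{O}]$ to the degree. The Hilbert~$90$ input flagged in the paper is not needed for the corollary statement itself; rather, it enters when one unwinds the Bredon-cellular description of $H_0^G(\topTor{T}; \underline{\Z})$ into the explicit generators-and-relations presentation of Merkurjev and Panin, via its standard role in controlling $H^1$ with coefficients in units.
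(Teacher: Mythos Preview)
Your proposal is correct and follows essentially the same route as the paper: the paper records the Atiyah--Hirzebruch spectral sequence from \cref{equivariant mainthm}, observes that the rank map identifies $\pi_0 K_G(F)$ with the constant Mackey functor $\underline{\ZZ}$, and then reads off the degree-zero edge isomorphism from the vanishing of $\pi_q K_G(F)$ for $q<0$. Your remark that Hilbert~90 only enters when further unwinding the Bredon description into the Merkurjev--Panin generators-and-relations form is exactly how the paper uses it as well.
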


Although we write it in the equivariant language, the proof 
of \cref{thm: thm A intro} does not proceed through equivariant homotopy theory. 
Instead, we lift everything to the land of \emph{motivic} homotopy theory and in fact prove 
a much more highly structured statement, namely \cref{thm: motivic main thm}. 
Let us switch gears to motivic homotopy theory and outline what actually goes into the proof.

\subsection*{The motivic story}

For a field $F$, we denote by \mdef{$\SH{\baf}$} the stable motivic category 
of $\baf$ in the sense of Voevodsky--Morel. This is a stable, symmetric monoidal 
$\infty$-category, which is constructed such that an object $E\in \SH{\baf}$ represents cohomology theory $H^{\ast}(-;E)$ on the category 
of smooth $\baf$-schemes $\Sm{\baf}$, given on some $X\in \Sm{\baf}$ by the (bigraded) homotopy groups 
of the motive $E^{X}\in \SH{\baf}$.
The objects appearing in \cref{thm: thm A intro} translate as follows into this setup:

\begin{enumerate}
    \item The role of the genuine $\GalG$-spectrum $K_G$ is played by the motivic
          spectrum $\mdef{\KGL_{\baf}} \in \SH{\baf}$ representing algebraic $K$-theory of 
          smooth $\baf$-schemes.
    \item For a torus $T/F$, the role of the topological mirror $\topTor{\t}$ is played
          by the \'etale delooping $\mdef{\deloop{\cow{\t}}}$ where $\cow{\t}$ is the 
          \'etale sheaf of group homomorphisms $\t\to \Gm$.
\end{enumerate}

Our goal in life is to produce a natural equivalence
\[ H_{\ast}(\deloop{\cow{\t}};\KGL_{\baf})\xrightarrow{\sim} H^{\ast}(\t;\KGL_{\baf}) = K_{\ast}(T)\]
between the motivic \emph{homology} of the \'etale delooping $\deloop{\cow{\t}}$
and the algebraic $K$-theory of~$\t$, which is a \emph{cohomology theory} applied to $\t$.
The first step then is to construct a highly structured,
natural comparison map, which we can manipulate using the machinery of motivic homotopy theory. 
We interpret it as a kind of \tdef{Fourier transform}. 
Recall that, if $R$ is an ordinary commutative ring and $G$ a finite abelian group
with Pontryagin dual $G^{\vee}=\hom(G,\QQ/\ZZ)$, the classical Fourier
transform is a map of commutative rings
\[ \Four_{\omega}\colon R[G] \too R^{G^{\vee}}\]
depending on a choice of orientation $\omega \colon \QQ/\ZZ\to R^{\times}$. Here, $R[G]$ is
the group ring construction and $R^{G^\vee}$ is the ring of functions $G^{\vee}\to R$. This tells us that, a natural first step towards writing 
down our comparison map is 
formulating a notion of \emph{duality} that intertwines a torus with the delooping of 
its character lattice.

We write $\Ettop{\baf} = \Shv_{\et}(\Sch_{\baf};\Spc)$ for the $\infty$-category of
sheaves of spaces on the big \'etale site of $\baf$. Then any commutative group scheme $A/F$ as
well as its \'etale classifying stack $\deloop{A}$ naturally refine to abelian group objects
in the category $\Ettop{\baf}$ and we make this identification implicitly from now on.
\begin{defn}
    For any abelian group object $M\in \Ab(\Ettop{\base})$ we define the
    \tdef{shifted Cartier dual} of $M$ as the internal mapping object
    \[\mdef{M^{\vee}}\coloneq \map(M,\deloop{\Gm}) \in \Ab(\Ettop{\base})\,.\]
Moreover, we define the motivic group ring construction
\[ \mdef{\KGL_{\baf}[-]}\colon \Ab(\Ettop{\base}) \too \CAlg_{\KGL_{\baf}}(\SH{\baf})\]
as the composite of the forgetful functor to Nisnevich sheaves with the 
left Kan extension of the motivic suspension spectrum functor 
$\KGL_{\baf}\otimes \Sigma^\infty_+(-)\colon \Sm{\baf}\to \Mod{\SH{\baf}}{\KGL_{\baf}}$. 
\end{defn}
If $\t$ is a torus over $\baf$, recalling that $\cow{\t}=\map(\t, \Gm)$ it is straightforward
to show that there are natural equivalences of \'etale sheaves
\[ \cow{\t}^{\vee} \simeq \deloop{\t} \quad \text{and} \quad 
\deloop{\cow{\t}}^{\vee}\simeq \t\,,\]
i.e.~the delooping of the torus is dual to its character lattice, and the torus itself
is dual to the delooping of its character lattice. 
Moreover, we can naturally identify $\deloop{\Gm}$ with the 
sheaf of spaces taking a scheme $X/F$ to the Picard groupoid $\pic(X)$. 
Under this identification, we get an \enquote{orientation} for free via
the natural map $\pic(X)\to K(X)^{\times}$. Building on this observation, we construct the
comparison map as a $K$-theoretic delooping of a categorical Fourier--Mukai transform, 
see \cref{sec:Fourier} for a detailed discussion. Concretely, we prove the following:

\begin{prop}[\cref{defn: mot fourier}] \label{prop: intro constr mot}
Let $M\in \Ab(\Ettop{F})$, such that the shifted Cartier dual $M^{\vee}$ is representable by 
a stack~\footnote{In the sense of \cref{ter: stack}.}.
There is a natural map of $\EE_\infty$-algebras 
\[ \mdef{\Four_{\rm{mot}}^{M}}\colon \KGL_{\baf}[M] \too \KGL_{\baf}^{M^{\vee}} \,\] 
in the category $\Mod{\SH{\baf}}{\KGL_{\baf}}$ which we call the \tdef{motivic Fourier transform}.
\end{prop}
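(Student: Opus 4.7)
The plan is to produce the Fourier transform as a $K$-theoretic delooping of the tautological evaluation pairing against the line bundle stack $\deloop{\Gm}$. Concretely, I will first construct the underlying multiplicative pairing of sheaves of $E_\infty$-spaces
\[ \Four_{\rm pre} \colon M \otimes M^\vee \too \Omega^\infty \KGL_{\baf} \]
in $\Ab(\Ettop{\baf})$, and then upgrade it to a map of $\KGL_{\baf}$-algebras via the universal property of the group ring construction.

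To get started, I need the \emph{orientation} $\theta \colon \deloop{\Gm}\to \Omega^\infty \KGL_{\baf}$. Under the identification $\deloop{\Gm}(X) \simeq \pic(X)$ with the Picard groupoid, the assignment $L\mapsto [L]$ lands in the units of $\pi_0\KGL_{\baf}(X)$; because the tensor product of line bundles is symmetric monoidal and $\KGL_{\baf}$ is an $\EE_\infty$-ring, this refines to an $\EE_\infty$-group map $\theta$ into $\mathrm{GL}_1(\KGL_{\baf})$. Now $M^\vee$ is defined precisely so that there is a tautological evaluation pairing $\mathrm{ev}\colon M\otimes M^\vee \to \deloop{\Gm}$ in $\Ab(\Ettop{\baf})$, and I set $\Four_{\rm pre} \coloneq \theta \circ \mathrm{ev}$. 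The pairing is a map of $\EE_\infty$-groups by construction.

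To convert $\Four_{\rm pre}$ into the stated map, I first use the hypothesis that $M^\vee$ is representable by a stack so that the internal hom $\map(M^\vee,-)$ commutes with $\Omega^\infty$, yielding an adjoint
\[ M \too \map\bigl(M^\vee, \Omega^\infty \KGL_{\baf}\bigr) \simeq \Omega^\infty \KGL_{\baf}^{M^\vee} \]
of $\EE_\infty$-monoids in $\Ab(\Ettop{\baf})$. By construction, $\KGL_{\baf}[-]$ is the left adjoint of the composite of $\Omega^\infty$ with the inclusion $\Ab(\Ettop{\baf})\hookrightarrow \Ab(\Shv_{\rm Nis}(\Sm{\baf}))$ into commutative monoids, so this adjoint extends uniquely to an $\EE_\infty$-algebra map $\Four_{\rm mot}^M \colon \KGL_{\baf}[M] \to \KGL_{\baf}^{M^\vee}$ in $\Mod{\SH{\baf}}{\KGL_{\baf}}$. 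Naturality in $M$ is immediate from the naturality of the evaluation pairing.

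The main technical obstacle is ensuring that $\KGL_{\baf}^{M^\vee}$ is well-behaved enough for the adjunction $\map(M^\vee,\Omega^\infty \KGL_{\baf}) \simeq \Omega^\infty \KGL_{\baf}^{M^\vee}$ to hold; this is exactly why the representability hypothesis on $M^\vee$ is in place, and I would expect the detailed verification to amount to showing that the stack presentation of $M^\vee$ can be resolved by smooth $\baf$-schemes compatibly with the motivic cohomology functor. The remaining subtlety is checking that the universal property of $\KGL_{\baf}[-]$ from the left Kan extension definition is actually an $\EE_\infty$-adjunction after passing from $\Sm{\baf}$ to $\Ab(\Ettop{\baf})$, which follows from the fact that both forgetful functors (étale-to-Nisnevich, abelian-group-to-underlying-space) preserve the relevant colimits.
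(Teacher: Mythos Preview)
Your approach diverges from the paper's and runs into a genuine obstacle. The paper does not build $\Four_{\rm mot}^M$ directly via the group-ring/units adjunction in $\Mod{\SH{\baf}}{\KGL_{\baf}}$. Instead it first constructs a \emph{categorical} Fourier transform $\Four_{\orcan}^M \colon \pQCoh{\baf}[M] \to (\pQCoh{\baf})^{\ulSpc{M^\vee}}$ in the parametrized category of $\Perf$-linear categories (where your orientation idea is implemented as the map $\Sigma\Gm \simeq \pic_\baf \to (\pQCoh{\baf})^\times$), then applies $K$-theory sectionwise followed by Nisnevich, $\AA^1$, and Bott localization to land in $\KGL_\baf$-modules, and finally precomposes with an assembly map $\KGL_\baf[M] \to \KFun{\baf}(\pQCoh{\baf}[M])$. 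The target is identified with $\KGL_\baf^{M^\vee}$ by showing that $K$-theory applied to $f_*\pQCoh{M^\vee}$ yields $f_* \KGL_{M^\vee}$.

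The specific gap in your argument is the claimed identification $\map(M^\vee, \Omega^\infty \KGL_\baf) \simeq \Omega^\infty \KGL_\baf^{M^\vee}$. In the paper's notation, $\KGL_\baf^{M^\vee}$ means $f_* \KGL_{M^\vee}$, using the genuine motivic $K$-theory spectrum over the stack $M^\vee$; it is \emph{not} the right Kan extension $f_* f^* \KGL_\baf$ from smooth $\baf$-schemes. When $M^\vee$ is a non-representable stack such as $\deloop{\t}$ these differ: your internal-hom construction produces Borel (Atiyah--Segal completed) $\t$-equivariant $K$-theory, whereas $f_*\KGL_{\deloop{\t}}$ represents genuine equivariant $K$-theory $K(\Perf(\deloop{\t}))$. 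The representability hypothesis on $M^\vee$ is there so that $\KGL_{M^\vee}$ exists over the stack, not to make an internal-hom formula hold; no resolution by smooth schemes will repair this. Routing through categories is precisely how the paper sidesteps the completion issue: applying $K$-theory to $\Perf(\deloop{\t})$ on the nose lands in the uncompleted object.
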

Here, the right hand side is \emph{not} defined via right Kan-extension from smooth
$F$-schemes, but using the global structure of the motivic spectrum $\KGL_{\baf}$, see also
the discussion at \cref{war: completed KGL}. Concretely, if $M^{\vee}=\deloop{\GG}$
for a well-behaved, smooth, commutative group scheme $\GG$, then the motive
$\KGL_{\baf}^{\deloop{\GG}}$ represents \emph{$\GG$-equivariant} $K$-theory. In particular,
the global sections are given by the spectrum $
C^{\ast}(F;\KGL_{\baf}^{\deloop{\GG}})=K(\Perf(\deloop{\GG}))$. 
Our main theorem in the motivic world goes as follows:

\begin{thmx}[\Cref{thm: mainthm}] \label{thm: motivic main thm}
    For any torus $\t$ over a field $\baf$, the motivic Fourier transform
    \[ \Four_{\rm{mot}}^{\deloop{\cow{\t}}} \colon \KGL_{\baf}[\deloop{\cow{\t}}]
    \xrightarrow{\sim} \KGL_{\baf}^{\t}\,,\]
    is an equivalence of $\EE_\infty$-algebras in $\Mod{\SH{\baf}}{\KGL_{\baf}}$. 
\end{thmx}

The equivariant statement of \Cref{thm: thm A intro} then follows by a standard assembly
argument about the realization functor $\SH{\baf}\to \Sp_{\GalG}$ where $\GalG$ is the 
absolute Galois group of $\baf$. Having constructed the motivic Fourier transform, the 
proof of \cref{thm: motivic main thm} proceeds by first making an auxiliary computation.

For a torus $\t$, we show that the Fourier transform $\Four_{\mathrm{mot}}^{\cow{\t}}$
witnesses the well-known fact the $\infty$-category of representations $\Perf(\deloop{\t})$ is 
\emph{semi-simple}. Concretely, given a splitting field $E/F$ and representatives 
$\chi\colon \t \to E^{\times}$ of each orbit of the character lattice,
we obtain a decomposition
\[ \Perf(\deloop{\t}) \simeq \bigoplus_{\chi} \Perf(F_{\chi})\,,\]
where $F_{\chi}$ is the fixed field of the stabilizer of $\chi$. 
We check that this is equivalence is induced by categorical version 
of the motivic Fourier transform, see \cref{prop: Fourier computation QCoh},
and learn the following from an assembly argument:

\begin{prop}[\Cref{thm: Fourier computation KGL}]\label{thm: Fourier intro}
Let $\t$ be a torus over $F$ with character lattice $\cow{\t}$. The motivic
Fourier transform
\[ \Four_{\rm{mot}}^{\cow{\t}}\colon \KGL_{\baf}[\cow{\t}] \xrightarrow{\sim} \KGL_{\baf}^{\deloop{\t}}\]
is an equivalence of $\EE_\infty$-algebras in $\Mod{\SH{\baf}}{\KGL_{\baf}}$.
\end{prop}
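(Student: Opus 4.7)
The plan is to reduce the proposition to direct sum decompositions on both sides, indexed by the Galois orbits of characters, and then verify that the motivic Fourier transform intertwines them. First I would unpack the left hand side: for a splitting field $E/F$ with $G = \mathrm{Gal}(E/F)$, the character set $\cow{\t}(E)$ decomposes into Galois orbits $\coprod_{[\chi]} G/\mathrm{Stab}(\chi)$, so the étale sheaf $\cow{\t}$ decomposes as $\coprod_{[\chi]} \mathrm{Spec}(F_\chi)$ with $F_\chi = E^{\mathrm{Stab}(\chi)}$. Since $\KGL_F[-]$ is defined by left Kan extension and both $\Sigma^\infty_+(-)$ and $\KGL_F \otimes (-)$ commute with coproducts, we get
\[ \KGL_F[\cow{\t}] \simeq \bigoplus_{[\chi]} \KGL_F \otimes \Sigma^\infty_+ \mathrm{Spec}(F_\chi). \]

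Next I would handle the right hand side using the categorical input. The referenced proposition on the categorical Fourier transform provides the semi-simple decomposition
\[ \Perf(\deloop{\t}) \simeq \bigoplus_{[\chi]} \Perf(F_\chi), \]
obtained by decomposing any $\t_E$-representation into its weight spaces and then taking Galois invariants orbit by orbit, and asserts that this equivalence is realized by a categorical Fourier--Mukai transform, the non-motivic ancestor of $\Four_{\mathrm{mot}}$. Applying algebraic $K$-theory functorially in smooth $F$-schemes should promote this to an identification of motives
\[ \KGL_F^{\deloop{\t}} \simeq \bigoplus_{[\chi]} \KGL_F \otimes \Sigma^\infty_+ \mathrm{Spec}(F_\chi), \]
and $\Four_{\mathrm{mot}}^{\cow{\t}}$ then matches the two sides summand by summand.

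The delicate point---the \emph{assembly argument} hinted at in the excerpt---is upgrading the categorical equivalence, which naively is only a statement about global sections, to an equivalence of motives in $\SH{F}$. I would handle this by verifying that both sides satisfy étale descent and reducing to the split case via base change to the splitting field $E$. Over $E$ the torus becomes split, the character lattice becomes a constant sheaf, and the statement collapses to the motivic form of the fundamental theorem of algebraic $K$-theory applied iteratively to the classifying stack of $\GG_m$. The main obstacle will be ensuring that the categorical Fourier--Mukai decomposition descends coherently along the Galois cover, so that the splitting of $\KGL_F^{\deloop{\t}}$ exists as an $\EE_\infty$-decomposition in $\SH{F}$ and is genuinely realized by $\Four_{\mathrm{mot}}^{\cow{\t}}$, rather than merely matching on homotopy groups of global sections.
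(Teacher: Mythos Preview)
Your overall shape is right: you correctly decompose the character lattice as a coproduct of finite \'etale schemes $\coprod_{[\chi]} \Spec F_\chi$ and correctly invoke the categorical decomposition $\Perf(\deloop{\t}) \simeq \bigoplus_{[\chi]} \Perf(F_\chi)$. Where your proposal diverges from the paper, and runs into a genuine gap, is in the ``delicate point'' paragraph.

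You propose to upgrade the categorical equivalence to a motivic one by ``verifying that both sides satisfy \'etale descent and reducing to the split case via base change to the splitting field $E$.'' This will not work: objects of $\SH{F}$ and $\KGL_F$-modules satisfy only Nisnevich descent, not \'etale descent, so you cannot check that a map in $\Mod{\SH{F}}{\KGL_F}$ is an equivalence after base change along a finite Galois extension. This is exactly the obstruction that makes non-split tori interesting in the first place.

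The paper sidesteps this entirely by the way $\Four_{\mathrm{mot}}$ is \emph{constructed}: it is literally defined as a composite
\[
\KGL_F[\cow{\t}] \xrightarrow{\mathrm{ass}_{\cow{\t}}} \KFun{F}(\pQCoh{F}[\cow{\t}]) \xrightarrow{\KFun{F}(\Four_{\mathrm{can}})} \KFun{F}\big((\pQCoh{F})^{\deloop{\t}}\big) \simeq \KGL_F^{\deloop{\t}},
\]
where $\KFun{F}$ is ``apply $K$-theory sectionwise, then Nisnevich-, $\AA^1$-, and Bott-localize.'' The categorical Fourier transform $\Four_{\mathrm{can}}$ is already an equivalence of \emph{parametrized} categories, so the second arrow is an equivalence for free---no descent argument needed. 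The only thing left is the assembly map $\mathrm{ass}_{\cow{\t}}$, and this is handled by your own decomposition: $\cow{\t}$ is an infinite coproduct of finite \'etale schemes, $\KFun{F}$ commutes with infinite coproducts (because $K$-theory is additive and the localizations are left adjoints), and for a single finite \'etale $f\colon X \to F$ the assembly map is an equivalence because the norm $f_\sharp \simeq f_\ast$ lets one compare it to the coassembly, which is automatically an equivalence. No reduction to the split case is ever performed.
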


Having shown this, we want to make full use of the naturality of the Fourier transform
to deduce \cref{thm: mainthm} from an Eilenberg--Moore formula. 
We do this using the following presentation of an arbitrary torus $\t$ in terms of
a Weil-restricted torus.

\begin{defn}\label{defn: Weil res intro}
Let $\t/F$ be a torus and $E$ a splitting field of $\t$. The
Weil restriction $\wt$ of the base change $\t_E$ back to $F$ comes equipped with a
surjective counit map $\wt \to \t$. Taking the kernel, we get a short exact sequence of tori
\[ 0 \to \kt \to \wt \to \t \to 0\]
which we call the \tdef{Weil-resolution} of $\t$ relative to $E$. 
\end{defn}

Given a Weil resolution of $\t$, we obtain a more \enquote{tame} variant of the
natural fiber sequence $\t \to \pt \to \deloop{\t}$, namely we deloop the resolution twice to 
obtain a pullback diagram of stacks of the form
\begin{equation*}
\begin{tikzcd}
	\t & \pt \\
	{\deloop{\kt}} & {\deloop{\wt}}\,.
	\arrow[from=1-1, to=1-2]
	\arrow[from=1-1, to=2-1]
	\arrow[from=1-2, to=2-2]
	\arrow[from=2-1, to=2-2]
\end{tikzcd}
\end{equation*}
By the naturality of the motivic Fourier transform $\Four_{\mathrm{mot}}$, combined with 
the auxiliary result of \cref{thm: Fourier intro} the proof of \cref{thm: motivic main thm}
reduces to the following claim.

\begin{prop}[\Cref{algEM}]\label{thm: EM intro}
    Let $\t/\baf$ be a torus and $E$ a splitting field. Then, the associated
    Weil-resolution induces an Eilenberg--Moore formula
    \[ \KGL_{\baf}^{\deloop{\kt}} \otimes_{\KGL_{\baf}^{\deloop{\wt}}} \KGL_{\baf} 
    \xrightarrow{\sim} \KGL_{\baf}^{\t} \in \CAlg_{\KGL_{\baf}}(\SH{F}).\]
\end{prop}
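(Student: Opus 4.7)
The plan is to reduce the Eilenberg--Moore formula to the split case by base change along the finite étale cover $\pi\colon \Spec E \to \Spec\baf$, verify the formula there directly using the motivic projective bundle formula, and then descend via Galois descent. The key observation is that the Weil-restricted torus $\wt = \mathrm{Res}_{E/\baf}(\t_E)$ is an \emph{induced} torus: it becomes split after base change to $E$, which makes the classifying stack $\deloop{\wt}$ tractable from the perspective of $\KGL$-cohomology, while still being genuinely non-split over $\baf$.

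First I would apply the symmetric monoidal functor $\pi^{\ast}\colon \SH{\baf} \to \SH{E}$ to both sides of the claimed equivalence. This transforms the relative tensor product into one computed over $E$, and each $\KGL_{\baf}^{X}$ into $\KGL_{E}^{X_E}$ for $X\in\{\t, \deloop{\kt}, \deloop{\wt}\}$, by finite étale base change for $\KGL$. Over $E$ the torus $\wt_E \simeq \t_E^{[E:\baf]}$ splits, so $\deloop{\wt_E}$ is a product of copies of $\deloop{\Gm}$. The motivic projective bundle formula, accessed through Totaro-style approximations of $\deloop{\Gm}$ by projective spaces $\mathbb{P}^{N}_E$, identifies $\KGL_{E}^{\deloop{\wt_E}}$ with a completed polynomial $\KGL_{E}$-algebra on first Chern classes of the tautological characters. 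A parallel description applies to $\KGL_{E}^{\deloop{\kt_E}}$, while $\KGL_{E}^{\t_E}$ is given by iterated applications of Quillen's fundamental theorem. Under these identifications, the asserted Eilenberg--Moore formula reduces to a concrete identity of completed polynomial algebras, reflecting the short exact sequence $\cow{\t}\hookrightarrow\cow{\wt}\twoheadrightarrow\cow{\kt}$ of character lattices induced by the Weil resolution.

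The remaining step is Galois descent along $E/\baf$. Each of the motives $\KGL_{\baf}^{\t}$, $\KGL_{\baf}^{\deloop{\kt}}$, $\KGL_{\baf}^{\deloop{\wt}}$ satisfies étale descent, since $\KGL_{\baf}$ is étale-local and the underlying stacks are finite étale forms of their $E$-analogues. The relative tensor product is compatible with this descent because it is formed inside $\Mod{\SH{\baf}}{\KGL_{\baf}}$, where the structure map $\KGL_{\baf}^{\deloop{\wt}}\to\KGL_{\baf}^{\deloop{\kt}}$ is flat, as witnessed by the polynomial presentation after base change. The $E$-equivalence therefore descends to an equivalence of $\EE_\infty$-$\KGL_{\baf}$-algebras over $\baf$.

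The principal obstacle is the convergence issue hidden in the projective bundle computation. Since $\deloop{\wt_E}$ is infinite-dimensional, $\KGL_{E}^{\deloop{\wt_E}}$ is a \emph{completed} algebra, and one must verify that the underived tensor product of the statement genuinely computes $\KGL_{E}^{\t_E}$ rather than a pro-completion or derived variant. The remedy is to work through the Totaro tower: establish a truncated Eilenberg--Moore formula at each finite stage via the classical projective bundle formula, then show that passage to the inverse limit commutes with the relative tensor product, leveraging the flatness established above. This convergence step is the technically delicate heart of the argument.
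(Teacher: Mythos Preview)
Your approach has two genuine gaps, the first of which is fatal.

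\textbf{Galois descent fails.} The claim that ``$\KGL_{\baf}$ is \'etale-local'' is incorrect, and with it the entire descent step. Algebraic $K$-theory satisfies Nisnevich descent but not \'etale descent; for a finite Galois extension $E/\baf$ with group $G$ one does not have $K(\baf)\simeq K(E)^{hG}$ in general (this is precisely the gap addressed by Thomason's theorem and the Lichtenbaum--Quillen conjecture). Correspondingly, $\pi^{\ast}\colon \Mod{\SH{\baf}}{\KGL_{\baf}}\to \Mod{\SH{E}}{\KGL_{E}}$ is not conservative: the composite $\KGL_{\baf}\to \pi_{\ast}\pi^{\ast}\KGL_{\baf}\to \KGL_{\baf}$ is multiplication by $[E:\baf]$, which is not a unit since $\pi_{0}\KGL_{\baf}(\baf)=\ZZ$. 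Hence verifying the comparison map after base change to $E$ tells you nothing about what happens over $\baf$. No amount of flatness of $\KGL_{\baf}^{\deloop{\wt}}\to\KGL_{\baf}^{\deloop{\kt}}$ repairs this.

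\textbf{The target is misidentified.} Even over $E$, the object $\KGL_{E}^{\deloop{\Gm}}$ is \emph{not} the Totaro limit $\varprojlim_{N}\KGL_{E}^{\PP^{N}}$. By the paper's conventions (see \cref{not: KGL cohom} and \cref{war: completed KGL}), $\KGL_{E}^{\deloop{\Gm}}=f_{\ast}\KGL_{\deloop{\Gm}}$ has global sections $K(\Perf(\deloop{\Gm}))\simeq K(E)[\ZZ]$, a Laurent-polynomial object, whereas the Totaro tower computes the naive $f_{\ast}f^{\ast}\KGL_{E}$, a genuine completion. Your ``completed polynomial algebra'' computation and the attendant convergence analysis therefore pertain to the wrong motive; the Eilenberg--Moore formula you would prove is not the one in the statement.

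\textbf{What the paper does instead.} The paper avoids descent entirely and works over $\baf$ throughout. The key geometric input is that the zero section $\Spec\baf\to\deloop{\wt}$ is \emph{Artin--Tate} in $\SH{\deloop{\wt}}$ (\cref{prop:point_to_BG_comp}), proved by an explicit $\Gm^{I}$-equivariant stratification of $\AA^{I}$ and an induction on strata (the Merkurjev--Panin filtration). Over $\KGL$, Bott periodicity collapses Artin--Tate to Artin. The Eilenberg--Moore map is then factored as a K\"unneth map inside $\SH{\deloop{\wt}}$, which is an equivalence because Artin--Tate objects are dualizable (\cref{prop: Kunneth}), followed by an abstract Eilenberg--Moore step for $p_{\ast}$ (\cref{EM-for-R-compactifiable}), which holds because $\deloop{\wt}$ is \'etale simply connected (\cref{lem: pi1 of BT}) so every finite \'etale transfer appearing in an Artin object is already pulled back from $\baf$ and hence commutes with the relevant pushforward via proper base change. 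The Weil-restricted nature of $\wt$ is used not to split it but to run the Artin--Tate induction.
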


An analogues Eilenberg--Moore formula for Chow groups was proven by Ananyevskiy and Geldhauser \cite[\href{https://arxiv.org/pdf/2408.09390v2\#theorem.2.3.3}{Cor.~2.3.3}]{ag24}.

Although all maps are representable and $\deloop{\kt}\to \deloop{\wt}$ is smooth, 
the map $\pt \to \deloop{\wt}$ is in general not proper, so we cannot deduce 
\cref{thm: EM intro} from a smooth and proper base change formula. 
To rectify this, we show that to obtain an Eilenberg--Moore formula, it suffices 
to check that the leg $\pt \to \deloop{\wt}$ satisfies a certain strong 
cellularity condition, namely that its motive in $\SH{\deloop{\wt}}$ is \tdef{Artin--Tate}
in the sense of \cref{def:comp}. 

The final and crucial geometric input is to verify that this condition is satisfied,
which uses that we chose $\wt$ to be Weil-restricted from a split torus.
Using an inductive argument, analogous to the one used by Merkurjev and Panin \cite{mp97}
in order to prove their variant of \Cref{corintro} and the one Ananyevskiy and Geldhauser \cite{ag24} used to prove their variant of \Cref{thm: EM intro}, we show the following:

\begin{prop}[\cref{prop:point_to_BG_comp}] \label{prop:Artin-Tate-intro}
    Let $F\to E$ be a finite separable extension and let $W$ denote the Weil-restriction
    of a split torus $\Gm^{\times n}$ along $F\to E$. Then the motive of the 0-section 
    $\pt \to \deloop{\wt}$ in $\SH{\deloop{\wt}}$ is Artin--Tate.
\end{prop}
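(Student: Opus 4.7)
The argument has a two-step structure: a reduction to the rank-one case using the multiplicativity of Weil restriction, followed by a smooth descent argument expressing the motive of the 0-section in terms of the motive of $W \coloneq \operatorname{Res}_{E/F}(\Gm)$ itself, whose Artin--Tateness in turn follows from its splitting after base change to $E$.

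\emph{Step 1 (reduction to $n = 1$).} As a right adjoint, Weil restriction commutes with finite products, giving $\wt \simeq \operatorname{Res}_{E/F}(\Gm)^{\times n}$ and $\deloop{\wt} \simeq \deloop{\operatorname{Res}_{E/F}(\Gm)}^{\times n}$. The 0-section decomposes as the external $n$-fold product of 0-sections into $\deloop{W}$. Since Artin--Tate motives (in the sense of \cref{def:comp}) are closed under external tensor products, it suffices to treat $n = 1$; henceforth we write simply $W = \operatorname{Res}_{E/F}(\Gm)$.

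\emph{Step 2 (smooth descent along the 0-section).} The 0-section $\pi \colon \pt \to \deloop{W}$ is the universal $W$-torsor and hence a smooth surjective atlas of $\deloop{W}$. Its \v{C}ech nerve is the bar construction of $W$, with $n$-th level $W^{\times n}$. By smooth descent in $\SH{\deloop{W}}$, the motive associated to $\pi$ is computed as a colimit along this \v{C}ech nerve of the motives of the products $W^{\times n}$, pulled back to $\deloop{W}$ along the structure map $\deloop{W} \to \operatorname{Spec}(F)$. Since the Artin--Tate subcategory of $\SH{\deloop{W}}$ is closed under colimits and tensor products, and since pullback along the structure map preserves Artin--Tateness, it suffices to show that $\Sigma^\infty_+ W$ is Artin--Tate in $\SH{F}$.

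\emph{Step 3 (Artin--Tateness of $W$).} After base change along the finite \'etale cover $p \colon \operatorname{Spec}(E) \to \operatorname{Spec}(F)$, the torus splits: $p^* W \simeq \Gm^{\times [E:F]}$. Its motivic suspension spectrum in $\SH{E}$ is pure Tate, being the $[E:F]$-fold tensor product of $\Sigma^\infty_+ \Gm \simeq \mathbf{1}_E \oplus \mathbf{1}_E(1)[1]$. By Galois descent, a smooth $F$-scheme whose motive becomes Tate after pullback along a finite \'etale cover lies in the Artin--Tate subcategory of $\SH{F}$, since the latter is by construction built from Tate twists of motives of finite \'etale $F$-schemes.

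The main technical obstacle lies in the descent formulation of Step 2: identifying the motive of the non-proper 0-section $\pi$ with the \v{C}ech-nerve colimit in a manner compatible with the Artin--Tate filtration on $\SH{\deloop{W}}$, since classical smooth-and-proper base change does not apply. Once this descent step is in place, the remainder of the argument follows from closure properties of the Artin--Tate subcategory together with the elementary fact that Weil restrictions of split tori become split after base change to the splitting field.
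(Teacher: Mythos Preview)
Your Step 2 contains a fatal gap. The Artin--Tate subcategory is defined as a \emph{thick} subcategory (\cref{def:comp}): it is closed under finite (co)limits and retracts, but \emph{not} under arbitrary colimits. The \v{C}ech nerve of $\pi\colon \pt \to \deloop{W}$ is an infinite simplicial diagram, so even granting that $\pi_\sharp \one_{\pt}$ is the geometric realization of $(\pi_n)_\sharp \one_{W^{\times n}}$, knowing that each term is Artin--Tate would not force the colimit to be. Moreover, your identification of the terms is not correct: since $\pi_n$ factors through $\pi$, one has $(\pi_n)_\sharp \one \simeq \pi_\sharp(\SS_F[W^{\times n}])$, which is $\pi_\sharp$ applied to an Artin--Tate object of $\SH{F}$, not the pullback along $p\colon \deloop{W}\to \Spec F$. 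There is no reason for $\pi_\sharp$ of an Artin--Tate motive to be Artin--Tate, as $\pi$ is not finite \'etale. So Step 2 fails on two counts, and Step 3 (which is essentially correct, via the retract argument using the transfer for $p\colon \Spec E \to \Spec F$) never gets used.

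The paper's proof is entirely different and avoids any infinite colimit. It works equivariantly from the start: one embeds $\GG_m^I \hookrightarrow (\AA^1)^I$ $\GG_m^I$-equivariantly and filters $(\AA^1)^I$ by the open subvarieties $U_k$ of tuples with at least $k$ nonzero coordinates. This gives a \emph{finite} chain of open--closed decompositions on the quotient stacks $U_k/\!\!/\GG_m^I$ over $\deloop{\GG_m^I}$, to which the d\'evissage lemma (\cref{lem:comp_two_out_of_three}) applies. The closed complements $Z_k$ break up, after a finite \'etale base change governed by the Galois orbits on $\binom{I}{k-1}$, into pieces that (up to $\AA^1$-homotopy) are pulled back from $\deloop{\GG_m^{J}}$ with $|J|<|I|$, allowing an induction on $|I|$. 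The point is that the relevant equivariant geometry produces only finitely many cofiber sequences, so one stays inside the thick subcategory throughout. Your reduction in Step 1 is valid, but it only reduces to $I$ a transitive $G$-set of size $[E:F]$, and the actual content---a finite equivariant cell structure replacing the infinite bar resolution---is still required.
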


\subsection*{Outline} 
We will start by recalling background material from motivic and equivariant homotopy theory as well as parameterized category theory in \Cref{sec: preliminaries}. Afterwards, in \Cref{sec:Fourier}, we will set up the categorical and the motivic Fourier transform and prove \Cref{prop: intro constr mot} and \Cref{thm: Fourier intro}. 
\Cref{sec: Motivic EM} deals with the more geometric aspects: We will prove the Eilenberg--Moore formula of \Cref{thm: EM intro} as well as the Artin--Tate property of \Cref{prop:Artin-Tate-intro}, resulting in \Cref{thm: motivic main thm}. Finally, in \Cref{sec: Kthy top mirror}, we will discuss the relation to equivariant homotopy theory, introduce the topological mirror and deduce \Cref{thm: thm A intro}.

\subsection*{Conventions}

\begin{enumerate}
  \item We work with the theory of $(\infty,1)$-categories as developed by Lurie        in~\cite{HTT} and~\cite{HA}.
    By a category we always mean an $(\infty,1)$-category.
    \item We fix three nested Grothendieck universes. We call categories defined in the smallest one \emph{small} and categories defined in the largest one \emph{large}.
    \item We write $\Spc$ for the category of (small) spaces, $\Cat$ for the category of (small) categories, $\BigCat$ for the large category of categories and $\Sp$ for the category
         of spectra.
    \item All schemes are quasi-compact and quasi-separated.
\end{enumerate}

\subsection*{Acknowledgments}
We would like to thank Tom Bachmann, Robert Burklund, Dustin Clausen, Maxime Ramzi and Sebastian Wolf for helpful conversations related to this paper. We are also grateful to Tom Bachmann and Maxime Ramzi for comments on an earlier draft. 
All the authors were partially supported by the Danish National Research Foundation through the Copenhagen Centre for Geometry and Topology (DNRF151). 
S.C.~is supported by a research grant from the Center for New Scientists at the Weizmann Institute of Science, and would like to thank the Azrieli Foundation for their support through an Early Career Faculty Fellowship.

\section{Preliminaries}\label{sec: preliminaries} 

We will make heavy use of the machinery
of the \emph{stable motivic category} $\SH{\stack}$ of any (well behaved) stack $\stack$, 
which we review in \cref{ssec: SH of stacks}. In the case of a scheme, this category was first constructed by Vovoedsky and Morel \cite{mv99} and the generalization to the equivariant schemes is due to Hoyois \cite{hoy17}. 
We will use the version for stacks due to Khan and
Ravi~\cite{kr24}.

In \Cref{ssec: motivic-vs-equivariant}, we discuss the relation between motivic
homotopy theory over a field $\baf$ and $\GalG$-equivariant homotopy theory where
$\GalG$ is the absolute Galois group of $\baf$.

Finally, in \Cref{ssec: parametrized}, we recall some basic results and constructions from parametrized homotopy theory as developed by Martini and Wolf in~\cite{mwyoneda},~\cite{mwcolimit} and~\cite{mwpresentable}.

\subsection{Motivic homotopy theory of stacks}\label{ssec: SH of stacks}

Let us first fix what we actually mean by \emph{stack} throughout this paper.

\begin{ter}\label{ter: stack}
Let $\base$ be a base scheme. Throughout this paper, a \tdef{stack}
over $\base$ is a sheaf of (1-truncated) spaces on the big \'etale site of $\base$ which is 
\emph{nicely scalloped} in the sense of~\cite[\href{https://arxiv.org/pdf/2106.15001\#equation.2.9}{Def.~2.9}]{kr24}
unless otherwise specified. 
\end{ter}

Having declared this once and for all, we also fix the following pieces of notation.

\begin{notation}
For a base scheme $\base$, we write $\mdef{\Sch_{\base}}$ for the category of qcqs
schemes over $\base$. Moreover, we denote by $\mdef{\Stk{\base}}$ the category of 
stacks over $\base$ and given any $\stack \in \Stk{\base}$, we often leave the base implicit
and write $\Stk{\stack}= (\Stk{\base})_{/\stack}$ for the slice category.
Moreover, we write $\mdef{\Sm{\stack}} \subseteq \Stk{\stack}$ for the full subcategory
spanned by the smooth, representable morphisms of finite presentation.
\end{notation}

Beware that, the notion of stack of \cref{ter: stack} is relative to the base $\base$. 
Its primary function is to \emph{exclude} stacks that are badly behaved, such
as the classifying stack of a finite group whose order is divisible by a residue field.
However, the only stacks that are essential for us are the following, which are well behaved
over any base.

\begin{example}
For any scheme $\base$, a \tdef{torus} over $\base$ is an abelian group scheme $\t$,
with the property that after a finite \'etale extension $\base^{\prime}\to \base$
it splits as $\t_{\base^\prime} \simeq \Gm^{\times n}$ for some $n\geq 1$. 
For any torus $\t/\base$, the \'etale classifying stack 
$\deloop{\t}$ is nicely scalloped by~\cite[\href{https://arxiv.org/pdf/2106.15001\#equation.2.2}{Ex.~2.2}]{kr24}. Moreover, for any injective group homomorphism of tori 
$\wt \to \t$ the \'etale delooping  $\deloop{\wt}\to \deloop{\t}$ 
is smooth, representable and of finite presentation and so defines an object of 
$\Sm{\deloop{\t}}$. 
\end{example}

We now outline the construction of the stable motivic category $\SH{\stack}$ of a stack $\stack$, in this form due to~\cite{kr24}. 

Fix a base scheme $\base$ and for any $\fX\in \Stk{\base}$ write $\mdef{\Nistop{\fX}}$
for the category of Nisnevich sheaves of spaces on $\Sm{\fX}$. 
The category of \tdef{motivic spaces}
is defined to be the full subcategory $\mdef{\rm{H}(\stack)} \subseteq\Nistop{\stack}$
spanned by the $\AA^1$-invariant sheaves. The fully faithful inclusion
$\rm{H}(\stack)\hookrightarrow \Nistop{\fX}$ admits a product preserving left adjoint
which we denote by
\[ \mdef{L_{\AA^1}}\colon \Nistop{\base}\too \rm{H}(\stack)\,,\] see \cite[\href{https://arxiv.org/pdf/2106.15001\#equation.3.4}{Rem.~3.4}]{kr24}.
If $\sE$ is a vector bundle on $\stack$ with total space $\sV(\sE)$, we include
$\stack \hookrightarrow \sV(\sE)$ via the zero section and denote by
\[ \mdef{\rm{Th}(\sE)} \coloneq \cofib\left( \sV(\sE)\setminus \stack \to \sV(\sE)\right)
\in \rm{H}(\stack)_{\ast}\]
the cofiber in pointed motivic spaces, which we call the \tdef{motivic Thom space} of $\sV$.

If the stack $\stack$ is \emph{nicely quasi-fundamental} in the sense 
of~\cite[\href{https://arxiv.org/pdf/2106.15001\#equation.2.9}{Def.~2.9}]{kr24}
the category of \tdef{motivic spectra} over $\stack$ is defined by formally $\otimes$-inverting all
motivic Thom spaces in $\rm{H}(\stack)_{\ast}$ with respect to the smash product, i.e.~we have 
\[ \mdef{\SH{\stack}}\coloneq \rm{H}(\stack)_{\ast}[\{\rm{Th}(\sE)^{-1}\}_{\sE}]\]
where $\sE$ runs over all vector bundles on $\stack$. The general case construction is
obtained via descent and we refer the reader to the proof of~\cite[\href{https://arxiv.org/pdf/2106.15001\#equation.4.5}{Thm.~4.5}]{kr24} for details.

The category $\SH{\stack}$ is stable, naturally refines to an object of $\CAlg(\Prl)$ and
we denote the monoidal unit by $\SS_{\fX}\in \SH{\stack}$, the \tdef{motivic sphere spectrum}.
By construction, $\SH{\stack}$ comes equipped with a suspension spectrum functor 
$\Sigma^\infty \colon \rm{H}(\stack)_{\pt} \to \SH{\stack}\,.$
We often implicitly consider smooth representable stacks $\fZ\in \Sm{\stack}$ as
Nisnevich sheaves of spaces on $\Sm{\fX}$ via the Yoneda-embedding and
denote the composition of $\AA^1$-localization and (unpointed) motivic suspension by
\[ \mdef{\SS_{\stack}[-]}\colon \Nistop{\stack} \too \SH{\stack}.\]
This functor is symmetric monoidal, colimit preserving and 
left Kan extended from the full subcategory $\Sm{\stack}\subseteq \Nistop{\stack}$.

If $f\colon \fZ\to \fX$ is any map of stacks, pullback of sheaves induces a symmetric
monoidal left adjoint functor
\[ \mdef{f^{\ast}} \colon \SH{\fX}\too \SH{\fZ}\]
whose lax symmetric 
monoidal right adjoint we denote by
\[ \mdef{f_{\ast}}\colon \SH{\fZ}\too \SH{\fX}.\]
Moreover, by \cite[\href{https://arxiv.org/pdf/2106.15001\#equation.4.5}{Thm.~4.5}]{kr24} the assignment taking a stack to its category of motivic spectra and taking a map to its $(-)^{\ast}$ refines to a functor
\[ \pSH^{\ast}\colon \Stk{\base}^\op \too \CAlg(\Prl) \]
which satisfies Nisnevich descent.

\begin{thm}[{\cite[\href{https://arxiv.org/pdf/2106.15001\#equation.4.10}{Thm.~4.10}]{kr24}}] \label{thm: lower sharp proj formula}
   Let $f\colon \fZ \to \fX$ be smooth and representable. Then the 
   functor $f^{\ast}$ admits a left adjoint 
   \[ \mdef{f_{\sharp}} \colon \SH{\fZ} \too \SH{\fX},\]
   which fulfills base change along $(-)^{\ast}$ and is $\SH{\fX}$-linear, meaning for
   all $A\in \SH{\fX}$ and $B\in \SH{\fZ}$, we have a projection formula
   \[ f_{\sharp}(f^{\ast}A \otimes B) \simeq A\otimes f_{\sharp} B \in \SH{\fX}.\]
   Moreover, $f_{\sharp}$ commutes with suspension, so in particular we have
    \[ f_{\sharp} \SS_{\fZ} \simeq \SS_{\fX}[\fZ] \in \SH{\fX}.\]
\end{thm}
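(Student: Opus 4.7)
The plan is to build $f_\sharp$ first at the level of Nisnevich sheaves of spaces, then descend through $\AA^1$-localization, and finally stabilize. Since $f\colon \fZ \to \fX$ is smooth and representable, postcomposition with $f$ defines a functor $\Sm{\fZ} \to \Sm{\fX}$; extending via the Yoneda embedding yields a colimit-preserving functor $f_\sharp\colon \Nistop{\fZ} \to \Nistop{\fX}$, left adjoint to the restriction $f^{\ast}$. On representables this reads $f_\sharp(\fY \to \fZ) \simeq (\fY \to \fX)$. I would then verify that $f_\sharp$ preserves $\AA^1$-equivalences and Nisnevich-local equivalences: smoothness of $f$ ensures that $f^{\ast}$ commutes with $\AA^1$-products and with Nisnevich covers, which by adjunction makes $f_\sharp$ descend to an adjunction $f_\sharp \dashv f^{\ast}$ between the pointed motivic space categories $\rm{H}(\fZ)_{\ast}$ and $\rm{H}(\fX)_{\ast}$. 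The unstable projection formula is immediate on representables from $f_\sharp(\fY) \otimes \fZ' \simeq \fY \times_{\fX} \fZ'$, using that $f^{\ast}$ computes fiber products over $\fZ$, and one extends it to all of $\rm{H}(\fX)_{\ast}$ by colimits.

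To stabilize, I would use that $f^{\ast}$ is symmetric monoidal and satisfies $f^{\ast}\rm{Th}(\sE) \simeq \rm{Th}(f^{\ast}\sE)$, so it respects the objects being inverted. The unstable projection formula promotes the adjunction $f_\sharp \dashv f^{\ast}$ to an adjunction of $\rm{H}(\fX)_{\ast}$-modules in $\Prl$; inverting Thom spaces, which is a module-categorical localization, then produces the stable left adjoint $f_\sharp\colon \SH{\fZ} \to \SH{\fX}$ together with the stable $\SH{\fX}$-linear projection formula. In particular $f_\sharp$ commutes automatically with motivic suspension, since the suspension functors are smashing with the inverted Thom spaces.

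For base change, the exchange transformation $g^{\ast} f_\sharp \to f'_\sharp g'^{\ast}$ along any $g\colon \fX' \to \fX$ is verified first on representables, where it is the evident Cartesian square in $\Sm{-}$, and then in general by colimit preservation of all functors involved. The formula $f_\sharp \SS_{\fZ} \simeq \SS_{\fX}[\fZ]$ is the representable computation applied to the terminal object of $\Sm{\fZ}$, lifted to $\SH{\fZ}$. The main obstacle is coordinating the construction with the Nisnevich-descent definition of $\SH{\fX}$ for a general nicely scalloped stack that is not itself nicely quasi-fundamental: one must verify that the locally defined $f_\sharp$ assemble coherently across the descent data. This ultimately reduces to the compatibility of the adjunction $f_\sharp \dashv f^{\ast}$ with Nisnevich restriction, which is already built into the unstable construction.
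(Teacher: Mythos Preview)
The paper does not prove this statement at all: it is simply quoted from \cite[Thm.~4.10]{kr24} as a black-box input, with no argument given. So there is no ``paper's own proof'' to compare against.

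That said, your sketch is a reasonable outline of the standard construction (as carried out in various forms by Ayoub, Cisinski--D\'eglise, Hoyois, and Khan--Ravi). The one place where your sketch is genuinely incomplete is exactly the one you flag yourself: for a general nicely scalloped stack $\fX$, the category $\SH{\fX}$ is \emph{defined} by Nisnevich descent from nicely quasi-fundamental stacks, not by the explicit invert-Thom-spaces recipe. So the construction of $f_\sharp$, the projection formula, and base change all have to be checked to glue along this descent diagram. This is not automatic from ``compatibility of $f_\sharp \dashv f^\ast$ with Nisnevich restriction'' alone: one needs that the locally-defined left adjoints satisfy Beck--Chevalley for the transition maps in the descent diagram, which is precisely the smooth base change statement you are trying to prove. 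Khan--Ravi handle this by first establishing the full six-functor package on quasi-fundamental stacks and then showing it extends by descent; your sketch would need to make that bootstrap explicit rather than leave it as a remark.
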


By general internal adjunction yoga, the smooth projection formula implies
that, for all $A\in \SH{\fZ}$ and $B\in \SH{\fX}$, we have a natural equivalence
\[ \iMap(f_{\sharp}A,B) \simeq f_{\ast}\iMap(A,f^{\ast}B).\]
In particular, taking $B=\SS_{\fX}$ we learn that we have a weak duality
$(f_{\sharp}(-))^{\vee}\simeq f_{\ast}\left((-)^{\vee}\right)$.

\begin{thm}\label{thm: proper proj formula}
   Let $f\colon \fZ\to \fX$ be representable, finite type and proper. Then, the functor 
    \[f_{\ast}\colon \SH{\fZ}\too \SH{\fX}\]
   commutes with colimits and is $\SH{\fX}$-linear, meaning for all 
   $A\in \SH{\fX}$ and $B\in \SH{\fZ}$ we have a projection formula
   \[ A \otimes f_{\ast}B \simeq f_{\ast}(f^{\ast}A\otimes B). \]
\end{thm}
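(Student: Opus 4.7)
The plan is to invoke the six-functor formalism for motivic spectra on stacks developed by Khan and Ravi in \cite{kr24}. For any representable morphism $f\colon \fZ \to \fX$ of finite type, their framework provides an exceptional pushforward $f_!\colon \SH{\fZ} \to \SH{\fX}$ which is colimit-preserving and satisfies the projection formula $f_!(f^\ast A \otimes B) \simeq A \otimes f_! B$ for all $A \in \SH{\fX}$ and $B \in \SH{\fZ}$. The crucial additional input is the identification $f_! \simeq f_\ast$ for proper morphisms, which is the motivic analogue of proper base change; combining these two facts, both assertions of the theorem follow at once.

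Alternatively, one can reduce to the case of schemes via Nisnevich descent on the base. Since $\pSH^\ast$ satisfies Nisnevich descent and the properties of being representable, proper, and finite type are all stable under base change, choosing a Nisnevich cover of $\fX$ by schemes and pulling $f$ back along it produces a proper finite type morphism of schemes on each piece, for which the projection formula and colimit-preservation of $f_\ast$ are classical results in motivic stable homotopy theory originating with Ayoub. The descent data then allow us to glue the local statements back together; the relevant compatibility between $f_\ast$ on the base and $f_\ast$ on the cover is encoded in proper base change along the smooth projections of the Nisnevich cover.

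The main obstacle is precisely the identification $f_\ast \simeq f_!$ for proper morphisms, which in the motivic setting requires nontrivial input (Nagata compactification in the scheme case, and additional care when passing to stacks). Once this identification is available, colimit-preservation of $f_\ast$ is automatic from $f_!$ being a left adjoint in the six-functor formalism, and the projection formula is a formal consequence of its $\SH{\fX}$-linearity. Since Khan and Ravi have already established all of this for nicely scalloped stacks, the proof essentially reduces to citing the appropriate results from \cite{kr24} and checking that the smoothness and representability hypotheses of the previous theorem are not needed on the cotangent side when $f$ is proper.
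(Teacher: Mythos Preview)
Your proposal is correct and matches the paper's approach: the paper's proof consists precisely of citing the relevant parts of \cite[Thm.~7.1]{kr24}, which package the six-functor identities you describe (in particular the identification $f_! \simeq f_\ast$ for proper $f$ and the projection formula for $f_!$). Your elaboration on why this works is accurate, and the alternative descent argument is unnecessary given that Khan--Ravi already handle the stacky case directly.
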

\begin{proof}
    This follows by combining statements (ii) and (iv) in~\cite[\href{https://arxiv.org/pdf/2106.15001\#equation.7.1}{Thm.~7.1}]{kr24}.
\end{proof}

\begin{thm}[{\cite[\href{https://arxiv.org/pdf/2106.15001\#equation.6.1}{Thm.~6.1.~(ii)~\&~(iii)}]{kr24}}]\label{thm: base change formulas}
Let $f\colon \fZ\to \fX$ be a representable map of stacks and suppose
we have a pullback diagram of stacks of the form:
    \[\begin{tikzcd}
	\mathfrak{P} & \fY \\
	\fZ & \fX
	\arrow["g", from=1-1, to=1-2]
	\arrow["q"', from=1-1, to=2-1]
	\arrow["p", from=1-2, to=2-2]
	\arrow["f"', from=2-1, to=2-2]
\end{tikzcd}\]
If $f$ is proper, the Beck-Chevalley transform
\[p^{\ast}f_{\ast}\too g_{\ast}q^{\ast} \]
is an equivalence. Moreover, if $p$ is additionally smooth and representable,
then we also have a natural equivalence
\[ p_{\sharp}g_{\ast} \xrightarrow{\sim} f_{\sharp}q_{\ast}.\]
\end{thm}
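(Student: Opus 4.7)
The plan is to reduce both statements from stacks to schemes via the Nisnevich descent property of $\pSH^\ast$ recorded in \cite{kr24}, and then invoke the classical proper base change formula. Since both sides of each claimed equivalence are compatible with Nisnevich covers of $\fX$---properness, smoothness and representability being Nisnevich-local conditions stable under pullback---one can verify the statements locally on $\fX$ and then on $\fZ$ via a scalloped atlas, reducing to the case where both stacks are schemes.

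For proper base change on schemes, I would follow the standard Ayoub/Cisinski--D\'eglise strategy: factor the proper morphism $f$ via Nagata compactification as a closed immersion $\fZ \hookrightarrow \mathbb{P}^n_\fX$ composed with the projection to $\fX$. Because the Beck--Chevalley transformation is compatible with composition in $f$, it suffices to treat two cases. A closed immersion $i \colon Z \hookrightarrow X$ is handled via the localization cofiber sequence $j_\sharp j^\ast \to \mathrm{id} \to i_\ast i^\ast$ with $j$ denoting the open complement; smooth base change for $j$ is built into the construction of $(-)^\ast$, and the cofiber sequence propagates it to proper base change for $i$. The projection $\pi \colon \mathbb{P}^n \to \mathrm{pt}$ is handled by the motivic projective bundle formula, which identifies $\pi_\ast$ with a finite direct sum of Tate shifts that manifestly commute with arbitrary pullbacks.

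For the second statement $p_\sharp g_\ast \simeq f_\sharp q_\ast$, I would derive it from the first by taking mates. Under the adjunctions $f_\sharp \dashv f^\ast$ and $p_\sharp \dashv p^\ast$, the candidate equivalence transforms into the composite of the proper base change equivalence $p^\ast f_\ast \simeq g_\ast q^\ast$ established above with the smooth base change equivalence $f^\ast p_\sharp \simeq q_\sharp g^\ast$ guaranteed by \Cref{thm: lower sharp proj formula}, together with an application of the smooth projection formula. The main obstacle I anticipate is the bookkeeping required to ensure that the nicely scalloped hypothesis is preserved under all the relevant pullbacks and cover refinements so that the reductions to schemes go through---this is precisely the technical content one would absorb from \cite{kr24}---while the scheme-level proper base change is itself well-trodden ground.
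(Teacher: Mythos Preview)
The paper does not prove this theorem at all: it is stated with a direct citation to \cite[Thm.~6.1(ii)\&(iii)]{kr24} and no proof environment follows. So there is no argument in the paper to compare against; your sketch is strictly more than what the paper offers.

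That said, two remarks on your proposal. Your outline for the first claim (proper base change) is the standard route---Nisnevich-local reduction to schemes, then Nagata plus localization plus the projective bundle formula---and is essentially how the cited reference proceeds, so this part is fine as a plan.

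For the second claim, note that the displayed formula in the paper almost certainly contains a typo: with $f$ merely proper (not smooth), the functor $f_\sharp$ need not exist, so $f_\sharp q_\ast$ is not well-typed. The intended statement, matching \cite[Thm.~6.1(iii)]{kr24}, is $p_\sharp g_\ast \xrightarrow{\sim} f_\ast q_\sharp$. Your argument invokes ``the adjunctions $f_\sharp \dashv f^\ast$ and $p_\sharp \dashv p^\ast$'', but the first of these is unavailable. The correct mate argument uses $p_\sharp \dashv p^\ast$ and $q_\sharp \dashv q^\ast$ (the latter existing because $q$ is a base change of the smooth map $p$); passing to mates of the proper base change equivalence $p^\ast f_\ast \simeq g_\ast q^\ast$ along these two vertical adjunctions yields $f_\ast q_\sharp \simeq p_\sharp g_\ast$. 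So your strategy is right in spirit, but you have applied the mate to the wrong pair of adjunctions, following the paper's typo.
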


Let $\sE$ be a vector bundle on $\fX$. We denote the motivic suspension spectrum of 
the Thom-space of $\sE$ by 
\[ \mdef{\SS_{\fX}\twist{\sE}}\coloneq \Sigma^{\infty}\mathrm{Th}(\sE)\in \SH{\fX}\,,\]
and refer to $\SS_{\fX}\twist{\sE}$ as the \tdef{motivic Thom spectrum} of $\sE$.
 By construction of $\SH{\fX}$, all motivic Thom spectra $\SS_{\fX}\twist \sE$
 are $\otimes$-invertible and hence this construction refines to a functor 
\[ \SS_{\fX}\twist{-}\colon K_0(\fX) \too \mathrm{Pic}(\SH{\fX})\,.\]
We then have the following important duality statement:

\begin{thm}[Atiyah Duality]\label{thm: Atiyah Duality}
   Let $f\colon \fZ\to \fX$ be a smooth and representable map of stacks and denote 
   by $\mathcal L_{f}$ the cotangent complex of $f$. 
   \begin{enumerate}
       \item There exists a natural transformation
   \[ f_{\sharp}\twist{-\mathcal L_{f}} \too f_{\ast},\]
   which is an equivalence if $f$ is proper. 
        \item Let $A\in \SH{\fZ}$ be a dualizable object. If $f$ is proper, then $f_{\sharp}A$ is dualizable as well with dual given by 
   $f_{\ast} (A^{\vee})\simeq f_{\sharp}(A^{\vee}\twist{-\mathcal L_f})$.
   \end{enumerate}
\end{thm}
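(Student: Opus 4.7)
The plan is to deduce both parts from motivic relative purity in the stacky setting developed by Khan--Ravi~\cite{kr24}, which encodes the smooth--proper ``ambidexterity'' of the six-functor formalism.

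For part (1), I would construct the natural transformation using deformation to the normal cone applied to the relative diagonal $\Delta_f \colon \fZ \to \fZ \times_\fX \fZ$, a closed immersion whose conormal sheaf is $\mathcal L_f$. Motivic relative purity then identifies the punctured tubular neighbourhood of $\Delta_f$ with the Thom space $\mathrm{Th}(\mathcal L_f)$, and chasing this through the smooth base change equivalence $f^{\ast}f_{\sharp} \simeq (p_2)_{\sharp} p_1^{\ast}$ from \Cref{thm: base change formulas} applied to the square defining $\fZ \times_\fX \fZ$ yields, by adjunction, the desired natural transformation $f_{\sharp}(- \twist{-\mathcal L_f}) \to f_{\ast}$. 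To check it is an equivalence when $f$ is additionally proper, my plan is to combine the proper base change of \Cref{thm: base change formulas} with Nisnevich descent to reduce to the case of smooth proper maps of schemes, where it is the classical Morel--Voevodsky/Hoyois Atiyah duality.

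Part (2) should then follow formally, without further geometric input. Given a dualizable $A \in \SH{\fZ}$, I would compute the internal mapping object as
\[ \iMap(f_{\sharp}A, B) \simeq f_{\ast}\iMap(A, f^{\ast}B) \simeq f_{\ast}(A^{\vee} \otimes f^{\ast}B) \simeq f_{\ast}(A^{\vee}) \otimes B, \]
where the first step is the internal adjunction noted right below \Cref{thm: lower sharp proj formula}, the second uses the dualizability of $A$, and the third is the proper projection formula of \Cref{thm: proper proj formula}. Setting $B = \SS_{\fX}$ identifies $(f_{\sharp}A)^{\vee}$ with $f_{\ast}(A^{\vee})$, and the naturality of this calculation in $B$ is precisely the trace condition needed to conclude that $f_{\sharp}A$ is in fact dualizable. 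The alternative expression $f_{\ast}(A^{\vee}) \simeq f_{\sharp}(A^{\vee}\twist{-\mathcal L_f})$ is then an instance of part~(1) applied to $A^{\vee}$.

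The hard part will be part (1): producing the purity transformation and checking it is an equivalence when $f$ is proper is already non-trivial for schemes, and it is precisely at this step that the ``nicely scalloped'' hypothesis enters the stack-theoretic setting in an essential way, since the deformation-to-the-normal-cone argument requires a well-behaved notion of regular closed immersion for the diagonal $\Delta_f$. In practice I would import the required purity statement directly from the six-functor formalism of~\cite{kr24} rather than reconstructing it.
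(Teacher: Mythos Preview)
Your proposal is correct and essentially matches the paper's proof. For part~(1) the paper simply cites \cite[Cons.~6.7 and Thm.~6.1(iv)]{kr24}, which is exactly your final suggestion to import the purity transformation from the Khan--Ravi six-functor formalism rather than rebuild it; for part~(2) the paper runs the same projection-formula chain you wrote, phrased with external mapping spaces $\Map(B\otimes f_\sharp A, C)$ and a test object $B$ in place of your internal $\iMap(f_\sharp A, B)$, but the content is identical.
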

\begin{proof}
    The first claim is~\cite[\href{https://arxiv.org/pdf/2106.15001\#equation.6.7}{Cons.~6.7.}]{kr24} combined with~\cite[\href{https://arxiv.org/pdf/2106.15001\#equation.6.1}{Thm.~6.1.}(iv)]{kr24}.
    For the second item we use the smooth and proper projection formulas of 
    \cref{thm: lower sharp proj formula} and \cref{thm: proper proj formula} to see
    that, for such $A\in \SH{\fZ}$ and all $B,C\in \SH{\fX}$ we have natural equivalences
   \begin{align*}
       \Map(B\otimes f_{\sharp}A, C)&\simeq \Map(f_{\sharp}(f^{\ast} B \otimes A), C)\\
       &\simeq \Map(f^{\ast}B \otimes A, f^{\ast}C)\\
       &\simeq \Map(f^{\ast}B, A^{\vee}\otimes f^{\ast}C)\\
       &\simeq \Map(B, f_{\ast}(A^{\vee} \otimes f^{\ast}C)\\
       &\simeq \Map(B, f_{\ast}A^{\vee} \otimes C)
   \end{align*} 
    so the claim follows.
\end{proof}

\subsection{K-theory and motivic Bott periodicity}\label{ssec: mot k theory}

Let $\Catperf$ be the category of small, stable, idempotent complete categories
with exact functors between them.
We denote the lax symmetric monoidal Bass--Thomason--Trobaugh $K$-theory construction
by
\[ \mdef{K}\colon \Catperf \too \Sp\,,\]
which we simply refer to as \tdef{algebraic $K$-theory}.

The assignment taking a scheme $X$ to its 
(derived) category of quasi-coherent sheaves $\QCoh(X)$ refines to a functor 
from $\mathrm{Sch}^{\op}$ to the category of stable, presentably symmetric monoidal categories 
$\CAlg(\Prlst)$. Moreover, $\QCoh(-)$ satisfies \'etale descent, so we may Kan extend
it to all \'etale sheaves of spaces and obtain a functor
\[ \QCoh(-)\colon \Shv_{\et}(\mathrm{Sch}; \Spc)^{\op} \too \CAlg(\Prlst) \,.\]
For any arbitrary $A\in \Shv_{\et}(\mathrm{Sch})$ the category $\QCoh(A)$ thus defined
has no reason to be compactly generated. However, we know, for example by~\cite[\href{https://arxiv.org/pdf/2106.15001\#equation.2.24}{Thm.~2.24}]{kr24},
that any $\stack \in \Stk{\base}$ is \emph{perfect} i.e.~the functor $\QCoh(-)$ 
factors through the (non-full) subcategory $\cPrlst \subseteq \Prlst$ of compactly
generated categories with compact object preserving functors between them.
Hence, for any stack $\fX\in \Stk{\base}$, we define the category of 
perfect complexes on $\fX$ as the category of compact objects 
\[\mdef{\Perf(\fX)}\coloneq \QCoh(X)^{\omega} \in \CAlg(\Catperf)\,.\]
This lets us apply the algebraic $K$-theory functor and we write
\[ \mdef{K(\fX)}\coloneq K(\Perf(\fX))\in \CAlg(\Sp)\]
which we simply call the \tdef{$K$-theory} spectrum of $\fX$.

By~\cite[\href{https://arxiv.org/pdf/2106.15001\#equation.10.2}{Thm.~10.2}]{kr24}, 
generalizing the original result of Thomason and Trobaugh~\cite{tt90},
the composite functor 
\[ \Stk{\stack}^{\op} \too \Sp, \quad \fZ \mapsto K(\fZ)\]
satisfies Nisnevich descent. Restricting to the smooth locus
$\Sm{\stack}$ we may take its $\AA^1$-localization to obtain an $S^1$-spectrum
in motivic spaces
\[ \mdef{\mathrm{KH}_{\stack}}\coloneq L_{\AA^1}K(-) \in \CAlg(\Nistop{\stack} \otimes \Sp)\]
which we refer to as \tdef{homotopy $K$-theory}. As discussed 
in~\cite[\href{https://arxiv.org/pdf/2106.15001\#equation.10.3}{Cons.~10.3}]{kr24},
if $\fX$ is regular, then we have 
$\mathrm{KH}_{\fX}(\fX)\simeq K(\fX)=K(\QCoh(\fX))$. 
Moreover, by \cite[\href{https://arxiv.org/pdf/2106.15001\#equation.10.4}{Rmk.~10.4}]{kr24},
we have for any representable map of stacks $f\colon \fZ\to \fX$ a natural equivalence 
\[ f^{\ast}\KH_{\fX} \simeq \KH_{\fZ}\,.\]

\begin{defn}\label{defn: KGL}
By~\cite[\href{https://arxiv.org/pdf/2106.15001\#equation.10.7}{Thm~10.7}]{kr24}
the sheaf of spectra $\mathrm{KH}_{\stack}$ is representable by a motivic 
$\EE_\infty$-ring spectrum
\[\mdef{\KGL_{\stack}}\in \CAlg(\SH{\stack})\, ,\]
such that, for any smooth and representable map $f\colon \fZ \to \stack \in \Sm{\stack}$, 
we have a natural equivalence $f^\ast \KGL_{\stack} \simeq \KGL_{\fZ}$.
We call $\KGL_{\stack}$ the \tdef{motivic $K$-theory spectrum} over $\stack$,
\end{defn}

\begin{notation}\label{not: KGL cohom}
For any map of stacks $f\colon \fZ\to \fX$, we write
\[ \mdef{\KGL_{\fX}^{\fZ}}\coloneq f_{\ast} \KGL_{\fZ}\in \CAlg_{\KGL_{\fX}}(\SH{\fX})\]
and refer to this motivic spectrum as the $\KGL_{\fX}$-cohomology of $\fZ$.
\end{notation}

\begin{war}\label{war: completed KGL}
Unlike for the motivic sphere, if $f\colon \fZ\to \fX$ is not representable, 
the motive $\KGL_{\fX}^{\fZ}$ generally does not agree with 
the naive cohomology $f_{\ast}f^{\ast}\KGL_{\fX}$. Indeed,
unwinding the definitions and using the smooth base change formula of~\cite[\href{https://arxiv.org/pdf/2106.15001\#equation.6.1}{Thm.~4.10.(i).(b)}]{kr24},
we learn that $\KGL_{\fX}^{\fZ}$ has underlying sheaf of spectra on $\Sm{\fX}$ 
given by
\[ \ulPsh{\fX}(\KGL_{\fX}^{\fZ})(\fA) = \KH(\fY \times_{\fX} \fA)\,\]
Meanwhile, $f_*f^*\KGL_\fX$ is simply the right Kan extension of its restriction
to smooth representable stacks over $\fX$ and will act as a suitable completed version of
$\KGL_{\fX}^{\fZ}$, as explained in \cite{elmanto2025filteredalgebraicktheorystacks}.
\end{war}

The motivic spectrum $\KGL_{\fX}$ satisfies a much stronger version of Atiyah duality,
namely \tdef{Bott-periodicity}. Namely, by \cite[\href{https://arxiv.org/pdf/2106.15001\#equation.10.7}{Thm.~10.7}]{kr24}, for any virtual vector bundle 
$\mathcal E \in K_0(\fX)$, there is a natural equivalence
\[ \KGL_\fX\twist{\mathcal \sE} \cong \KGL_\fX \,. \]
This allows us to describe the category $\KGL_{\fX}$-modules as a localization
of the category $\KH_{\fX}$-modules, which will be useful for us in 
\cref{ssec: motivic Fourier}. 
Denote by $\Sp(\MSpc{\fX})$ the category of $S^1$-spectra in motivic spaces,~i.e. spectral
$\AA^1$-invariant Nisnevich sheaves on $\Sm{\fX}$. 
The description in this form is due to Hoyois~\cite{Hoyois_2020}.

\begin{prop} \label{prop:Bott-periodization-is-localization}
    For any stack $\fX$, the forgetful functor
    \[ \Mod{\SH{\fX}}{\KGL_{\fX}} \too \Mod{\Sp(\MSpc{\fX}_{\ast}}{\KH_{\fX}}\]
    is fully faithful. 
\end{prop}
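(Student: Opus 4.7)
The plan is to factor the forgetful functor as a composition of two fully faithful functors, exploiting the description of $\SH{\fX}$ as a symmetric monoidal localization of $\Sp(\MSpc{\fX}_\ast)$. By the construction of $\SH{\fX}$ recalled above, there is a symmetric monoidal left adjoint
\[ L\colon \Sp(\MSpc{\fX}_\ast) \too \SH{\fX} \]
whose right adjoint $R$ is fully faithful; here $L$ is the $\otimes$-localization formally inverting the motivic Thom spectra. By the cited result of Hoyois, this localization can equivalently be obtained by inverting a single Bott element $\beta$, using Bott periodicity of $\KGL_\fX$ to identify the various Thom twists.

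\textbf{First step.} I would first show that since $R$ is fully faithful and $L$ is symmetric monoidal, for any $A \in \CAlg(\SH{\fX})$ the induced right adjoint on modules
\[ R_A\colon \Mod{\SH{\fX}}{A} \too \Mod{\Sp(\MSpc{\fX}_\ast)}{RA} \]
is again fully faithful. This is a formal consequence of the bar-resolution description of module mapping spaces: $\Map_{\Mod_A}(M,N)$ is the totalization of a cosimplicial object with terms of the form $\Map_{\SH{\fX}}(A^{\otimes n} \otimes M, N)$, each of which is preserved by the fully faithful $R$. Specializing to $A = \KGL_\fX$ yields the first fully faithful leg of the factorization.

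\textbf{Second step.} I would then identify the unit map $\KH_\fX \to R\KGL_\fX$ in $\CAlg(\Sp(\MSpc{\fX}_\ast))$ as a localization of $\EE_\infty$-rings, i.e.\ verify that the multiplication
\[ R\KGL_\fX \otimes_{\KH_\fX} R\KGL_\fX \xrightarrow{\sim} R\KGL_\fX \]
is an equivalence. This is the essential content of Bott periodicity in the form used to define $\KGL_\fX$ in \cref{defn: KGL}: passing from $\KH_\fX$ to the Bott-inverted ring is idempotent after tensoring. Given this, the restriction of scalars functor
\[ \Mod{\Sp(\MSpc{\fX}_\ast)}{R\KGL_\fX} \too \Mod{\Sp(\MSpc{\fX}_\ast)}{\KH_\fX} \]
is fully faithful, since its left adjoint (extension of scalars) is a localization of categories.

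Composing the two fully faithful functors gives the forgetful functor of the statement. The main obstacle is the second step, specifically verifying at the level of $\EE_\infty$-rings (and not merely on homotopy groups) that Bott-periodization exhibits $R\KGL_\fX$ as a genuine localization of $\KH_\fX$; this is precisely where the cited work of Hoyois on Bott-inverted motivic $K$-theory enters.
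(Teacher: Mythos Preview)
Your two-step factorization is correct in spirit and close to the paper's argument, but it has one genuine gap: you tacitly assume that $R\colon \SH{\fX}\to \Sp(\MSpc{\fX}_\ast)$ is fully faithful for \emph{every} stack $\fX$. As recalled in the paper, $\SH{\fX}$ is only constructed as a $\otimes$-localization of $\Sp(\MSpc{\fX}_\ast)$ when $\fX$ is nicely quasi-fundamental; for general (nicely scalloped) stacks it is \emph{defined} via Nisnevich descent, and the fully faithfulness of $R$ is not immediate. The paper handles this by first proving the quasi-fundamental case and then deducing the general case by Nisnevich descent, using that the forgetful functor is compatible with smooth $(-)^\ast$ and that fully faithful functors are closed under limits. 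Your argument needs this descent step as well.

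In the quasi-fundamental case your approach and the paper's are essentially equivalent, just organized differently. The paper cites Hoyois's result directly to identify $\Mod_{\KGL_\fX}(\SH{\fX})$ with the full subcategory of Bott-periodic $\KH_\fX$-modules. Your Step~1 (that $R_A$ is fully faithful whenever $R$ is) is a clean formal observation; the bar-resolution justification does work, since mapping into an object in the essential image of $R$ lets you pass through $L$ and use that $L$ is strong monoidal with $LR\simeq\id$. Your Step~2 is exactly the same content as the paper's citation of Hoyois---that Bott-periodization is a localization of $\KH_\fX$-modules---rephrased as idempotence of the ring map $\KH_\fX \to R\KGL_\fX$. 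So the substantive input is identical; what you are missing is only the reduction to the quasi-fundamental case.
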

\begin{proof}
    Let us first consider the case where $\stack$ is quasi-fundamental in the sense of \cite[\href{https://arxiv.org/pdf/2106.15001\#equation.2.9}{Def.~2.9}]{kr24}, in which case
    the stable motivic category $\SH{\stack}$ is obtained from $\Sp(\MSpc{\fX}_{\ast})$ by
    $\otimes$-inverting Thom spaces.  

    In this case, we even have a description of the essential image: It is spanned by the Bott-periodic modules. That is, as explained in \cite[Sec.~4]{Hoyois_2020}, for each vector bundle $\sE$ on $\fX$ we may use the projective bundle 
    formula to construct
    a map of $S^1$-spectra
    \[ \beta_{\sE}\colon \Sigma^{\infty}_{S^1}\mathrm{Th}(\sE) \too \KH_{\fX}\]
    called the Bott element of $\sE$.
    A $\KH_\stack$-module is called Bott-periodic if it is local with respect to those morphisms. 
    The fully faithfulness claim then follows from~\cite[Prop.~3.2]{Hoyois_2020}, using the fact that Thom-spectra are invertible
    in $\SH{\fX}$ and so Bott-periodicity for $\KGL_{\fX}$-modules is automatic. 

    For the general case, we can choose a Nisnevich cover of any stack by nicely quasi-fundamental stacks. As the functor $u_{\fX}$ commutes with smooth $(-)^\ast$ (as $(-)_\sharp$ commutes with the adjoint), we can deduce the general statement by Nisnevich descent, using that fully faithful functors are closed under limits.  
\end{proof}
In particular, we may identify the category $\Mod{\SH{\fX}}{\KGL}$ with a Bousfield localization.
We denote the symmetric monoidal left adjoint by
\[\mdef{L_{\beta}}\colon \Mod{\Sp(\MSpc{\fX})}{\KH_{\fX}}\too \Mod{\SH{\fX}}{\KGL_{\fX}}\]
and refer to it as \tdef{Bott-periodization}. 

\subsection{Equivariant and motivic homotopy theory}\label{ssec: motivic-vs-equivariant}

In this section, we import some equivariant technology, as well
as a comparison to motivic homotopy theory, of which we give a brief
summary. We direct the reader towards~\cite[Sec.~10]{bh21} for an in-depth
discussion.

Let $\baf$ be a field, choose a separable closure $\baf\to \bar{\baf}$ 
and denote the absolute Galois group by~$\GalG$.
As $G$ is a pro-finite group, the theory of $\GalG$-equivariant homotopy
theory is slightly modified from the case of a finite group.

\begin{defn}
For $G$ a pro-finite group, we write $\Fin_G$ for the category of finite continuous $G$-sets and $\mathcal O_G$ for the subcategory of transitive $G$-sets. We write
    \[ \mdef{\Spc^{BG}} = \lim_{N<G \, \text{finite index}} \Spc^{BG/N} \,, \] where the limit is taking along taking homotopy fixed points, for the \mdef{category of Borel $G$-spaces} and \[ \mdef{\Spc_G} = \Fun(\mathcal O_G^{\op},\Spc) \cong \lim_{N<G \, \text{finite index}} \Spc_{G/N} \] where the limit is taken along taking fixed points for the category of \mdef{genuine $G$-spaces}.
The inclusion of Borel-equivariant spaces into genuine spaces, i.e.~the right adjoint to restriction, is denoted by
\[ \mdef{\beta} \colon \Spc^{BG} \too \Spc_G \,. \]
Finally, we write \mdef{$\Sp_{\GalG}$} for 
the category of \tdef{genuine $\GalG$-spectra} as defined in \cite[Sec.~9]{bh21}.
\end{defn}

This category of genuine $G$-spectra was first constructed by Barwick~\cite{bar17} and
his construction agrees with the one given by Bachmann and Hoyois by~\cite[Prop.~9.11]{bh21}.

We regard $\Fin_G$ as a full
subcategory of $\Sm{\baf}$ via the unique coproduct preserving functor
determined by the assignment
\begin{equation}\label{eq: G-sets into schemes}
 i\colon \Fin_{\GalG}\too \Sm{\baf}, \quad \GalG/H \mapsto \bar{\baf}^{H}.
\end{equation}
The subcategory $\Fin_G \subseteq \Sm{\baf}$ is closed under taking both Nisnevich and \'etale
covers, so that $\Fin_G$ defines a sub-site of $\Sm{\baf}$ in either topology. 
Let $\tau\in \{\Nis, \et\}$, then restriction and left Kan extension along $i$
induces an adjunction
\[c_{\tau} \colon \Shv_{\tau}(\Fin_G) \adj \Shv_{\tau}(\Sm{F}) 
\noloc u_{\tau}.\] 
Recall that we had an adjunction 
\[ \nu_{\ast} \colon \Shv_{\et}(\Sm{\baf}) \adj \Shv_{\Nis}(\Sm{\baf})\noloc \nu^{\ast} \]
where $\nu_{\ast}$ is the fully faithful inclusion and $\nu^{\ast}$ is
sheafification.
Now let $\cO_G\subseteq \Fin_G$ denote the full subcategory spanned by
the transitive $G$-sets and consider the following diagram of sheaf categories:
\[ \begin{tikzcd} \Shv_{\et}(\Sm{F}) \ar[d,"{u_{\et}}"] \ar[r,"\nu_{\ast}"] & \Shv_{\Nis}(\Sm{F}) \ar[d,"{u_{\Nis}}"] \\
 \Shv_{\et}(\Fin_G) \ar[r,"\nu_{\ast}"] \ar[d,"\simeq"] & \Shv_{\Nis}(\Fin_G) \ar[d,"\simeq"] \\
 \Spc^{BG} \ar[r,"\beta"] & \Spc_G
 \end{tikzcd} \] 
 Here, in the upper square the horizontal maps are the canonical inclusions and the vertical maps are
 given by restriction along the maps of the respective sites. 
 The bottom right vertical equivalence is given by restriction along the transitive,
finite $G$-sets $\mathcal O_G$. It restricts to the equivalence on the left between Borel $G$-spaces and étale sheaves.

Since everything in sight is (lax) symmetric monoidal, applying $\CAlg(-)$ and restricting 
to group-like objects, we get a commutative diagram of right adjoints: 
\[\begin{tikzcd}
	{\etSpcn(F)} & {\etSpcn(F)} \\
	{\Spcn({\deloop{\GalG}}}) & {\Spcn({\cO_{\GalG}^{\op}}})\, ,
	\arrow["{\nu_{\ast}}", from=1-1, to=1-2]
	\arrow["{U_{\et}}"', from=1-1, to=2-1]
	\arrow["{U_{\Nis}}", from=1-2, to=2-2]
	\arrow["\beta"', from=2-1, to=2-2]
\end{tikzcd}\]
such that $u_{\tau} \ulSpc{}= \ulSpc{U_{\tau}}$ where $\tau$ denotes either topology. Accordingly,
we obtain adjunctions on sheaves of spectra which we denote
\[ C_{\et} \colon \Spcn(\deloop{\GalG}) \rightleftarrows  \Spcn_{\et}(\baf) \noloc U_{\et}\]
and 
\[ C_{\Nis} \colon \Spcn( \cO_{\GalG}^{\op}) \rightleftarrows \Spcn_{\Nis}(\baf) \noloc U_{\Nis}\,.\]

We also record the following:
\begin{lem} \label{lem:C-fully-faithful}
    For $\tau \in \{ \et,\Nis \}$ the left adjoints $c_\tau, C_\tau$ are fully faithful.
    Moreover, the right adjoints $u_\tau, U_\tau$ preserve colimits.
\end{lem}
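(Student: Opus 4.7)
My plan is to reduce both statements to two key properties of the inclusion $i\colon \Fin_{\GalG} \to \Sm{\baf}$ from~\eqref{eq: G-sets into schemes}: that it is \emph{fully faithful}, and that it is \emph{cocontinuous} with respect to both topologies $\tau \in \{\Nis, \et\}$, meaning every $\tau$-cover of $i(X)$ in $\Sm{\baf}$ admits a refinement by the image under $i$ of some $\tau$-cover of $X$ in $\Fin_{\GalG}$. Full faithfulness of $i$ is the classical Galois correspondence identifying continuous finite $\GalG$-sets with finite étale $\baf$-algebras via $\GalG/H \leftrightarrow \bar{\baf}^{H}$.

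For cocontinuity I would argue by case analysis. In the Nisnevich case the claim is immediate: since $i(\GalG/H) = \bar{\baf}^H$ is the spectrum of a field, any Nisnevich cover must admit a section over the unique point, so it is already refined by the identity, which trivially lies in the image of $i$. In the étale case I would first refine an arbitrary étale cover by a finite étale one (using quasi-compactness of the spectrum of a field together with the fact that any étale morphism to such a spectrum is locally finite étale), and then observe that a finite étale cover of $\bar{\baf}^H$ is a disjoint union of spectra of finite separable extensions, which under the Galois correspondence is the image under $i$ of an étale cover of $\GalG/H$ in $\Fin_{\GalG}$.

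Given these two inputs, full faithfulness of $c_\tau$ follows by checking that the unit $F \to u_\tau c_\tau F$ is an equivalence for every $\tau$-sheaf $F$. The presheaf-level left Kan extension $i_! F$ already satisfies $(i_! F)(i X) \simeq F(X)$, since $i$ is fully faithful and so the identity on $X$ is terminal in the comma category $(\Fin_{\GalG})_{/X}$; cocontinuity then ensures that the subsequent $\tau$-sheafification leaves this value unchanged, because any cover of $i(X)$ in $\Sm{\baf}$ is refined by a cover coming from $\Fin_{\GalG}$ where $F$ is already a sheaf. For colimit preservation of $u_\tau$, cocontinuity of $i$ is exactly the condition equipping the restriction functor with a further right adjoint (right Kan extension along $i$ restricted to sheaves), so $u_\tau$ preserves all colimits.

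The corresponding statements for $C_\tau$ and $U_\tau$ would then follow formally from the space-level versions via the compatibility recorded just above the lemma between $u_\tau$ on sheaves of spaces and $U_\tau$ on sheaves of connective spectra: both full faithfulness and colimit preservation are detected after forgetting to underlying sheaves of spaces, and the group-completion of pointed connective objects preserves both properties. The step I expect to require the most care is the sheafification argument in the étale case, where cocontinuity has to be leveraged to prevent the sheafification from altering the value on $\Fin_{\GalG}$; everything else is essentially standard site-theoretic formalism once the two properties of $i$ are in place.
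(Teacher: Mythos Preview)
Your approach is essentially the same as the paper's: both hinge on the full faithfulness of $i\colon \Fin_{\GalG}\hookrightarrow \Sm{\baf}$ together with the fact that $\tau$-covers of objects in the image of $i$ are refined by covers already in $\Fin_{\GalG}$, so that $\tau$-sheafification does not alter the value of $i_!F$ on $\Fin_{\GalG}$. The paper phrases this as ``$\Fin_G$ is closed under all covers'' and then argues directly that colimits (pointwise-then-sheafify) are preserved under restriction; you phrase it as cocontinuity of $i$ and invoke the resulting further right adjoint. These are interchangeable.

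One small wobble: your last paragraph proposes to deduce colimit preservation of $U_\tau$ from that of $u_\tau$ by ``detecting it on underlying sheaves of spaces.'' That step is shaky, since the forgetful functor from connective spectra to spaces does not preserve all colimits. It is cleaner---and this is what the paper does---to run the identical cocontinuity/cover-closure argument directly for sheaves of connective spectra: restriction along $i$ commutes with pointwise colimits, and the sheafification step is again harmless on $\Fin_{\GalG}$ for the same reason. No new idea is needed, just apply your own argument one more time rather than reducing to the space case.
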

\begin{proof}
    We claim that the unit of the adjunction  $c_\tau \dashv u_\tau$ is an equivalence.
     This is true for the adjunction on presheaf categories, as $\Fin_G \subset \Sm{F}$ is 
    fully faithful. Moreover, as $\Fin_G$ is closed under all covers, sheafification does
    not change the value of the left adjoint at elements in $\Fin_G$. The argument
    for fully faithfulness of $C_\tau$ is exactly the same. 
    
    Similarly, we observe that the right adjoints preserve colimits on the level
    of presheaf categories. As colimits are computed by taking the colimit 
    pointwise and then sheafifying, the claim follows by the same argument
    as the fully faithfulness.
\end{proof}

Lastly, these adjunctions actually refine to the level of $\pSH$, which we 
record as the following proposition.

\begin{prop} \label{prop: c-commutes-with-suspension}
Restriction and Kan-extension along the inclusion $i\colon \rm{Fin}_{\GalG} \to \Sm{\baf}$
induces an adjunction
\[ \cC\colon \Sp_{\GalG}\adj \SH{\baf} \noloc \cU,\]
such that $\cC$ is symmetric monoidal and makes the following diagram commute:
    \[ \begin{tikzcd} 
        \Shv_{\Nis}(\Fin_\GalG) \ar[d,"{c_{\Nis}}"] \ar[r,"{\mathbb{S}[-]}"] & \GSp{\GalG} \ar[d,"\cC"] \\
        \Shv_{\Nis}(\Sm{\baf}) \ar[r,"{\mathbb{S}[-]}"] & \SH{\baf}\, .
        \end{tikzcd} \] 
\end{prop}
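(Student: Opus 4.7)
The plan is to construct $\cC$ by successively invoking the universal properties that define the localizations $\Sp_{\GalG}$ and $\SH{\baf}$, starting from the symmetric monoidal left adjoint $c_\Nis$ of \Cref{lem:C-fully-faithful}, and then obtain $\cU$ as its right adjoint via the adjoint functor theorem.

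First, I would lift $c_\Nis$ through the motivic suspension to obtain a symmetric monoidal colimit-preserving functor
\[ \Phi \colon \Spc_{\GalG} = \Shv_{\Nis}(\Fin_{\GalG}) \xrightarrow{c_\Nis} \Shv_{\Nis}(\Sm{\baf}) \xrightarrow{\SS_{\baf}[-]} \SH{\baf}. \]
Its existence as a functor landing in $\SH{\baf}$ (as opposed to merely Nisnevich sheaves) is automatic once $c_\Nis$ lands in $\AA^1$-invariant sheaves, and this in turn is clear from the definition of $i$: on representables one has $c_\Nis(\GalG/H) = \bar{\baf}^H$, a finite \'etale $\baf$-scheme, which is $\AA^1$-invariant because there are no non-constant morphisms $\AA^1_{\baf} \to \bar{\baf}^H$. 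The symmetric monoidality of $\Phi$ is inherited from that of $c_\Nis$ and of the motivic suspension.

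The essential second step is to promote $\Phi$ through the further localization that produces $\Sp_{\GalG}$ out of $\Spc_{\GalG}$. By the universal property of genuine $\GalG$-spectra, accessed via the profinite presentation $\Sp_{\GalG} \simeq \lim_N \Sp_{\GalG/N}$, it suffices to verify that $\Phi$ sends every representation sphere $S^V$, for $V$ a finite-dimensional continuous real representation of some finite quotient $\GalG/N$, to an invertible object of $\SH{\baf}$. Such a $V$ corresponds, via Galois descent along $\bar{\baf}^N \to \baf$, to a vector bundle $\mathcal E_V$ on $\baf$, and unwinding the definitions yields $\Phi(S^V) \simeq \SS_{\baf}\twist{\mathcal E_V}$. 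This motivic Thom spectrum is invertible in $\SH{\baf}$ by the very construction of the stable motivic category. Assembling these factorizations across the profinite system produces the desired symmetric monoidal cocontinuous $\cC$.

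Commutativity of the square in the statement is then automatic: both composites are cocontinuous symmetric monoidal functors out of $\Shv_{\Nis}(\Fin_{\GalG})$ agreeing on representables by construction, hence agree everywhere. The right adjoint $\cU$ exists by the adjoint functor theorem applied to the presentable target $\SH{\baf}$. The main obstacle lies in making the identification $\Phi(S^V) \simeq \SS_{\baf}\twist{\mathcal E_V}$ precise and natural in $V$, and in checking that it is compatible with the profinite limit defining $\Sp_{\GalG}$; once these are in place, the rest of the argument is purely formal.
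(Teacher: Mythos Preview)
Your overall strategy---build $\Phi = \SS_{\baf}[-] \circ c_{\Nis}$ and then factor it through the stabilization $\Spc_{\GalG} \to \Sp_{\GalG}$ by checking that representation spheres go to invertible objects---is the right shape, and is essentially what lies behind the reference the paper cites (Bachmann--Hoyois, Prop.~10.6). The paper itself does not reprove this; it simply invokes that reference and then remarks that the square commutes because both composites are colimit-preserving and agree on representables.

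However, your key step contains a genuine gap. The assertion that a real representation $V$ of $\GalG/N$ ``corresponds, via Galois descent along $\bar{\baf}^N \to \baf$, to a vector bundle $\mathcal{E}_V$ on $\baf$'' is not correct. Galois descent identifies vector bundles on $\Spec \baf$ with \emph{semilinear} representations of $\GalG/N$ on $\bar{\baf}^N$-vector spaces, not with $\mathbb{R}$-linear representations. There is no natural way to turn an arbitrary real $\GalG/N$-representation into a vector bundle on $\Spec \baf$. Already for the trivial one-dimensional representation $V=\mathbb{R}$ the claimed identification fails: $\Phi(S^1)$ is the simplicial suspension $\Sigma\SS_{\baf} = S^{1,0}$, whereas the Thom spectrum of the trivial line bundle is $\SS_{\baf}\twist{\mathcal{O}} = S^{2,1}$, and these are not equivalent in $\SH{\baf}$.

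What actually makes the argument work is more delicate. It suffices to invert $S^{\rho_{\GalG/N}}$ for the regular representation, and the point is that $S^{\rho_{\GalG/N}}$ is the norm $N_e^{\GalG/N}(S^1)$; under $\Phi$ this becomes a Weil-restriction-type norm of an invertible object, which is again invertible. Making this precise is exactly what the norm formalism of Bachmann--Hoyois is for, and it is not a matter of ``unwinding definitions.'' So while you correctly flagged this identification as the main obstacle, the obstacle is not merely one of precision: the formula you wrote down is wrong, and the replacement argument requires nontrivial input.
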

\begin{proof}
    This is precisely~\cite[\href{https://arxiv.org/pdf/1711.03061\#equation.10.6}{Prop.~10.6}]{bh21}, noting that, since everything in sight
    commutes with colimits, it suffices to check commutativity on representables 
    $\GalG/H \in \Fin_{\GalG}$. 
\end{proof}

The right adjoint $\cU$ now gives a convenient definition of the 
$\GalG$-equivariant algebraic $K$-theory spectrum.

\begin{defn}\label{defn: G equiv K theory}
    For any $X\in \Sm{\baf}$, we define the \tdef{$\GalG$-equivariant $K$-theory}
    of $X$ as the genuine $\GalG$-spectrum 
    \[\mdef{K_{\GalG}(X)}\coloneq \mathcal{U} (\KGL_{\baf}^{X})
    \in \CAlg(\GSp{\GalG}),\]
    where $\cU\colon \SH{\baf}\to \GSp{\GalG}$ is the right adjoint
    of \cref{prop: c-commutes-with-suspension}.
\end{defn}

\begin{rem}
Explicitly, we have that, for any open subgroup $H\subseteq \GalG$, the 
$\GalG$-spectrum $K^{\GalG}(X)$ has $H$-fixed points given by the formula
 \[K_\GalG(X)^H= K(X_{\bar{\baf}^H}).\]
Note that, since $K$-theory does in general not satisfy \'etale descent, this 
$\GalG$-spectrum is typically not Borel.
\end{rem}

\subsection{Recollections on parametrized category theory}\label{ssec: parametrized}

In this subsection we fix a topos $\tB$ throughout. We will remind the reader of some constructions within parametrized category theory, that is,
the category theory internal to $\tB$, as developed by Martini--Wolf in~\cite{mwyoneda},~\cite{mwcolimit} and~\cite{mwpresentable}.

Among other things, we recall the notions of parametrized Kan extensions, 
presentability and the Lurie tensor product. Moreover, we discuss some aspects of 
higher algebra in the $\tB$-parametrized setting.

\begin{defn}
Let $\tB$ be a topos. A \tdef{$\tB$-parametrized category} $\cC$ is a 
limit preserving functor
    \[\cC \colon \tB^\op\longrightarrow \BigCat \,, \quad U \mapsto \cC(U)\,. \]
    The collection of $\tB$-categories forms a large category, which we denote by 
    $\mdef{\BigCat(\tB)}\coloneq \Fun^{\lim}(\tB^\op,\BigCat)$. A $\tB$-category is called small if it factors through $\Cat \subset \BigCat$.
\end{defn}

Unless otherwise specified, for a $\tB$-category $\cC$ and a map 
$f\colon U\to V$ in $\tB$, we denote by $f^{\ast}\colon \cC(V)\to \cC(U)$ 
the \enquote{pullback} functor supplied by the data of $\cC$. As usual, we also write
$f_{\sharp} \dashv f^{\ast} \dashv f_{\ast}$ for the adjoints, if they exist.

Evaluating at any object $U\in \tB$ defines a sections functor which we denote by
\[ \Gamma_{U}\colon \BigCat(\tB)\too \BigCat, \quad \cC \mapsto \Gamma_U(\cC)= \cC(U).\]
For $U= \pt$ being the terminal object of $\tB$, we refer to
$\Gamma(\cC)\coloneq \Gamma_{\pt}(\cC)$ as the global sections of $\cC$. 
For $U\in \tB$, an \tdef{element of $\cC$ in context $U$} is an element of $\cC(U)$.
If no context is specified, an element of $\cC$ is understand to mean an element of
the global sections $\Gamma(\cC)$.

\begin{example}
The following are the prototypical examples of $\tB$-categories.
    \begin{enumerate}
    \item  For any topos $\tB$, the slice functor
    \[\mdef{\Omega_\tB}:\tB^{\op} \too \BigCat \quad b\mapsto\tB_{/b},\] 
     defines a $\tB$-category that we refer to as the \tdef{universe} of $\tB$.
    \item  For any $A\in \tB$, the mapping space functor
        \[ \Map_{\tB}(-,A)\colon \tB^{\op}\too \Spc \subseteq \Cat\]
        defines a small $\tB$-category. In fact, this construction yields an equivalence
        $\Fun^{\rm{lim}}(\tB^{\op}, \Spc)\simeq \tB$ and so we freely consider
        objects of $\tB$ as $\tB$-categories.
    \item Let $\tB=\Shv(\cal{X}, \Spc)$ be a sheaf topos on some site $\cal{X}$.
          Then, right Kan extension along the fully faithful inclusion $\cal{X}\to \tB^{\op}$  
          defines an equivalence of categories
          \[ \Shv(\cal{X},\BigCat) \xrightarrow{\sim} \BigCat(\tB).\]
    \end{enumerate}
  \end{example}

  \begin{defn}\label{defn: param functor cat}
    By~\cite[\href{https://arxiv.org/pdf/2111.14495\#equation.2.6.4}{Prop.~2.6.4}]{mwcolimit} the category $\Cat(\tB)$ is cartesian closed.
    Thus, for any $\cC,\cD\in \BigCat(\tB)$ we have a $\tB$-category $[\cC,\cD]$ 
    of $\tB$-parametrized functors from $\cC$ to $\cD$. We write
    \[\mdef{\Fun_\tB(\cC,\cD)}\coloneq \Gamma_*[\cC,\cD]\in\BigCat\] 
    for the category of global sections.
  \end{defn}

  The content of \cref{defn: param functor cat} provides a natural refinement of $\BigCat(\tB)$
  to an $(\infty,2)$-category, and hence one can talk about adjunctions internal to $\BigCat(\tB)$
  as is done in~\cite[\href{https://arxiv.org/pdf/2111.14495\#equation.3.1.1}{Def. 3.1.1}]{mwcolimit}. 
  We will not discuss these notions in detail and instead skip ahead to parametrized
  Kan extensions and then work backwards. Note that, for any $\tB$-functor $f\colon \cC\to \cD$ 
  we have an induced natural transformation $f^{\ast}\colon [\cD,-]\to [\cC,-]$
  of functors $\BigCat(\tB)\to \BigCat(\tB)$. The following definition is precisely\footnote{Beware that, we denote left adjoints to pullback generically by $(-)_{\sharp}$, while Martini--Wolf use $(-)_{!}$.} \cite[\href{https://arxiv.org/pdf/2111.14495\#equation.6.3.1}{Def. 6.3.1}]{mwcolimit} and will be important for us.
  
  \begin{defn} \label{defn: kan extension}
   Let $f\colon \cC \to \cD$ and $g\colon \cC\to \cE$ be $\tB$-functors.
    A \tdef{left Kan extension} of $g$ along $f$ is a $\tB$-functor 
    \[\mdef{f_{\sharp}g}:\cD\rightarrow \cE,\] 
    together with an equivalence of $\tB$-functors $[\cD,\cE]\to \Omega_{\tB}$
    of the form
    \[\Map_{[\cD,\cE]}(f_{\sharp}g,-) \simeq \Map_{[\cC,\cD]}(g,f^{\ast}(-))\,. \]
    Dually, a \tdef{right Kan extension} along $f$ is a functor $\mdef{f_{\ast}g}:\cD\to \cE$,
    together with an equivalence
    \[\Map_{[\cD,\cE]}(-,f_*g)\simeq\Map_{[\cC,\cE]}(f^*(-),g).\]
    If $f$ is the unique functor to the terminal $\tB$-category $\cC\to \pt$ 
    we call $f_{\sharp}g$ the \tdef{parametrized colimit} and $f_{\ast}g$
    the \tdef{parametrized limit} of $g$.
  \end{defn}

\begin{example}\label{example: parametrized colimit}
  Let $\cC$ be a $\tB$-category and let $A\in \tB$ with terminal map
  $p\colon A\to \pt$. Let $X$ be an object of $\cC(A)$, considered as a $\tB$-functor
  $A \xrightarrow{E} \cC$. Unwinding the definitions, we see
  that, if this functor admits a parametrized colimit, there exists an
  object $p_{\sharp}E \in \cC(\ast)$, together with an equivalence 
  \[ \eta\colon \Map_{\cC(\pt)}(p_{\sharp}E,-) \simeq \Map_{\cC(A)}(E, p^{\ast}(-))\,. \]
  In particular, if $\cC$ admits all parametrized colimits of this form,
  the functor $p^{\ast}$ necessarily admits a left adjoint $p_{\sharp}$. Dually,
  one gets that the parametrized limit is computed via a faux right adjoint~$p_{\ast}$.
  We also write $E^{A}\coloneq p_{\ast}E \in \cC$ and 
  $E[A] \coloneq p_{\sharp}E$, whenever they exist.
\end{example}

\begin{defn}
    A $\tB$-category $\cD$ is called (co)complete if any functor $f \colon \cC \to \cD$ with $\cC$ small, admits a $\tB$-(co)limit.

 Moreover, $\cD$ is called \tdef{presentable} if it is cocomplete and sectionwise presentable.
 We write $\mdef{\pPrl{\tB}}$ for the large category of presentable $\tB$-categories and colimit-preserving functors.
\end{defn}

\begin{rem}\label{presentable defn}
It is shown in \cite[\href{https://arxiv.org/pdf/2111.14495\#equation.5.4.7}{Cor.~5.4.7}]{mwcolimit} that a $\tB$-category is cocomplete if and only if 
  \begin{enumerate}
        \item  it factors through the large category $\BigCat^{\rm{R}}_{\rm{cc}}$ of 
        cocomplete categories with colimit preserving right adjoint functors between them.
        \item  The adjoints $(-)_{\sharp} \dashv (-)^{\ast}$ satisfy the Beck-Chevalley
        condition for any pullback square in $\tB$.
    \end{enumerate} 
\end{rem}

\begin{thm}[{\cite[\href{https://arxiv.org/pdf/2209.05103v2\#equation.2.6.3.2}{Prop.~2.6.3.2} \& \href{https://arxiv.org/pdf/2209.05103v2\#equation.2.6.3.7}{Prop~2.6.3.7}]{mwpresentable}}]\label{colim completion}  
 The large category of presentable $\tB$-categories $\pPrl{\tB}$ admits a symmetric monoidal
  structure with unit being the universe $\Omega_\tB$. Moreover, the global sections
  functor 
  \[\Gamma(-)\colon \pPrl{\tB}\too \Mod{\Prl}{\tB}\]
  admits a strong symmetric monoidal, fully faithful and colimit preserving left adjoint
  \[ -\otimes_{\tB} \Omega_{\tB}\colon \Mod{\Prl}{\tB} \too \pPrl{\tB}.\]
\end{thm}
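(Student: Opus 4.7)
The plan is to establish the two parts in turn, following the template used to construct Lurie's tensor product on $\Prl$ and then transferring it to the parametrized setting.

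For the symmetric monoidal structure on $\pPrl{\tB}$, I would adapt Lurie's construction of the tensor product on $\Prl$: define $\cC\otimes_{\tB}\cD$ to corepresent $\tB$-functors $\cC\times\cD\to\cE$ preserving parametrized colimits separately in each variable. Existence within $\pPrl{\tB}$ follows by presenting each presentable $\tB$-category as an accessible localization of a $\tB$-parametrized presheaf category and checking that the relevant localizations persist under tensoring. That $\Omega_\tB$ serves as the unit is then a direct consequence of the parametrized Yoneda lemma, since $\Omega_\tB$ is the free cocompletion of the terminal $\tB$-category under $\tB$-colimits, so that tensoring against it is canonically the identity.

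For the adjunction, I would construct the left adjoint directly: send $\cD\in\Mod{\Prl}{\tB}$ to the presentable $\tB$-category whose sections over $U\in\tB^{\op}$ are given by the Lurie tensor product $\cD\otimes_{\tB}\tB_{/U}$, with transition maps induced by pullback $\tB_{/V}\to\tB_{/U}$ along $f\colon U\to V$. Cocompleteness as a $\tB$-category follows from the presentability of each $\tB_{/U}$ together with the Beck--Chevalley compatibilities recalled in \Cref{presentable defn}. Evaluating at $U=\pt$ recovers $\cD\otimes_{\tB}\tB\simeq\cD$, so the unit of the adjunction $\cD\to\Gamma(\cD\otimes_\tB\Omega_\tB)$ is an equivalence, whence the left adjoint is fully faithful. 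Colimit preservation is automatic for a left adjoint.

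The most delicate point, and where I would expect the bulk of the technical work to sit, is verifying that this left adjoint is \emph{strong} symmetric monoidal. Unwound, this demands that the parametrized tensor product on $\pPrl{\tB}$ agrees under $\Gamma$ with the ordinary Lurie tensor product on $\Mod{\Prl}{\tB}$. I would reduce this to generators: both sides send the $\tB_{/U}$ to canonically identifiable objects of $\pPrl{\tB}$, and both tensor products are pinned down by classifying bilinear functors in each variable. Compatibility on generators together with cocontinuity in each variable then forces the comparison natural transformation to be an equivalence.
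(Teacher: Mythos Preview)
The paper does not supply a proof of this statement: it is stated as a theorem with a citation to Martini--Wolf \cite{mwpresentable}, specifically Propositions 2.6.3.2 and 2.6.3.7, and is used as a black box in the preliminaries section. There is therefore no proof in the paper to compare your proposal against.

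Your sketch is a plausible outline of how one might approach such a result, and it broadly matches the strategy one finds in the cited source: realize the parametrized Lurie tensor product via bilinear functors, identify the unit via the free cocompletion property of the universe, and build the left adjoint sectionwise as $\cD \otimes_{\tB} \tB_{/U}$. That said, several of your steps are stated at a level of generality where the real work is hidden. In particular, the claim that ``accessible localizations persist under tensoring'' in the parametrized setting and the Beck--Chevalley verification for the sectionwise construction both require the internal-category-theoretic machinery developed at length in \cite{mwcolimit} and \cite{mwpresentable}; these are not routine. Since the paper treats the theorem as an imported result, the appropriate move here is simply to cite Martini--Wolf rather than to attempt an independent proof.
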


\begin{defn}\label{defn: param sheaves}
Equipped with the symmetric monoidal structure of \cref{colim completion}, we call 
$\mdef{\CAlg(\pPrl{\tB}})$ the large category of \tdef{presentably symmetric monoidal} $\tB$-categories. 
We also denote the composition
\[\Prl \xrightarrow{-\otimes\tB} \Mod{\Prl}{\tB}
  \xrightarrow{-\otimes_{\tB}\Omega_{\tB}} \pPrl{\tB}\] 
simply by $-\otimes \Omega_{\tB}$. For any
presentable category $\cC\in \Prl$ we call 
\[\mdef{\cC \otimes \Omega_{\tB}} \in \pPrl{\tB}\] 
the presentable $\tB$-category of \tdef{$\cC$-valued sheaves on $\tB$}.
\end{defn}

Since the functor $-\otimes \Omega_{\tB}$ is symmetric monoidal by \cref{colim completion}, it 
carries presentably symmetric monoidal categories to presentably symmetric monoidal $\tB$-categories.

\begin{rem}\label{rem: param modules + algebras}
  In~\cite[\href{https://arxiv.org/pdf/2209.05103v2\#section.2.5}{Sec.~2.5}]{mwpresentable}, Martini--Wolf develop the notion of commutative algebra
  objects in a presentably symmetric monoidal $\tB$-category $\cC$. We will use the following
  facts:
  \begin{enumerate}
      \item For any $\cC\in \CAlg(\pPrl{\tB})$, there exists a co-complete $\tB$-category
      $\CAlg(\cC)$ together with, for all $A\in \tB$ a natural identification
      $\CAlg(\cC)(A)=\CAlg(\cC(A))$. 
      \item For any commutative algebra $R\in \CAlg(\cC)$, there exists a $\tB$-category
      $\Mod{\cC}{R}$, together with, for any $p\colon A\to \pt \in \tB$ a natural identification
      $\Mod{\cC}{R}(A)=\Mod{\cC(A)}{p^{\ast}R}$. Moreover, 
      by~\cite[\href{https://arxiv.org/pdf/2209.05103v2\#equation.2.5.2.8}{Prop.~2.5.2.8}]{mwpresentable} the $\tB$-category $\Mod{R}{\cC}$ is again
      presentably symmetric monoidal.
  \end{enumerate} 
\end{rem}

\begin{construction} \label{constr: units}
Let $\CAlggrp(\Omega_{\tB})\subseteq \CAlg(\Omega_{\tB})$ denote the full
$\tB$-subcategory spanned by the \textit{grouplike} commutative monoid objects 
          in $\Omega_{\tB}$. The inclusion admits a parametrized right adjoint 
          \[ (-)^{\times} \colon \CAlg(\Omega_{\tB}) \longrightarrow \CAlggrp(\Omega_{\tB})  \] sending a commutative algebra to its \mdef{units}.  Both, the inclusion and its right adjoint are computed sectionwise, as they preserve limits.  

 Let $\Sp^{\rm{cn}} \in \CAlg(\Prl)$ denote the category of connective spectra. In accordance
  with \cref{defn: param sheaves}, we define 
  the $\tB$-category of \tdef{connective $\tB$-spectra} as  
  \[ \mdef{\pSpcn{\tB}}:= \Omega_{\tB} \otimes \Sp^{\rm{cn}} \in \CAlg(\pPrl{\tB}).\]

  Applying the Boardman--Vogt--May theorem \cite[\href{https://www.math.ias.edu/~lurie/papers/HA.pdf\#theorem.5.2.6.10}{Thm.~5.2.6.10}]{HA}
  sectionwise gives an equivalence of $\tB$-parametrized categories
          \[ \CAlggrp(\Omega_\tB)\simeq \pSpcn{\tB}.\]
          Under this identification, we have an evident forgetful functor to the universe,
          which we denote by
          \[ \ulSpc{} \colon \pSpcn{\tB}\too \Omega_{\tB}\,. \]
\end{construction}

\begin{war}
Beware that a sheaf of connective spectra is not the same as a sheaf of spectra
          whose sections happen to be connective, as the sheaf condition is 
          a limit formula. Nonetheless, there is a full faithful functor 
          $\pSpcn{\tB} \to \Sp_{\tB}$ given sectionwise by including and then sheafifying,
           with right adjoint given by taking connective covers, see also the
           $t$-structure discussion in \cref{ssec: Gluing the Fourier}.
\end{war}

The sectionwise loop and suspension functors induce an adjunction
          \[ \Sigma\colon \pSpcn{\tB} \adj \pSpcn{\tB}\noloc \Omega\]
          where $\Sigma$ is computed by suspending in presheaves and then sheafifying,
          while $\Omega$ is just computed in presheaves of connective spectra. In particular,
          we have that $\Omega\circ \Sigma \simeq \id$, i.e.~$\Sigma$ is fully faithful.

          Since $\pSpcn{\tB}$ is presentably symmetric monoidal by construction, 
          so is $\Mod{\pSpcn{\tB}}{A}$ for any $A\in \CAlg(\pSpcn{\tB})$ 
          by \cref{rem: param modules + algebras}. In particular, these categories admit an
          internal mapping object which for $M,N\in \Mod{\pSpcn{\tB}}{A}$ we
          denote by
          \[ \map_{A}(M,N)\in \Mod{\pSpcn{\tB}}{A}.\]

\section{The Fourier transform for motivic K-theory}\label{sec:Fourier}

Given a ring $R$ and a finite abelian group $G$ with Pontryagin dual $G^\vee$,
the classical Fourier transform is a natural map of commutative rings:
\[\mathcal{F}_\omega:R[G]\longrightarrow R^{G^\vee}\]
which depends on the choice of an orientation, i.e.~a map of abelian groups
\[\omega:\mathbb{Q}/\mathbb{Z}\longrightarrow R^\times.\]
There are numerous generalizations, categorifications and derived enhancements of this
construction. The goal of this section is to produce two more. First,
a Fourier--Mukai transform for modules over the category of perfect complexes on a scheme. Secondly, we use this to produce a Fourier transform
in the category of $\KGL$-modules in the stable motivic category $\pSH$.

We begin in \cref{ssec: parametrized Fourier} by generalizing the Yoga 
of~\cite{chromfour} to the $\tB$-parametrized setting for any topos $\tB$. 
Concretely, setting up a Fourier transform has two inputs:
\begin{enumerate}
    \item A pair $(R,I)$ consisting of a base ring $R\in \CAlg(\pSpcn{\tB})$ together with
          a dualizing object $I\in \Mod{\pSpcn{\tB}}{R}$ which we call a \tdef{Fourier context}.
     \item A presentably symmetric monoidal $\tB$-category $\cC$, together with a
     \tdef{Fourier orientation}, given by a map of connective $\tB$-spectra
     \[ \mdef{\omega} \colon I \too \1_{\cC}^{\times}\,.\]
\end{enumerate}

Using the parametrized adjunction 
\[ \1_{\cC}[-] \colon \Mod{\pSpcn{\tB}}{R} \rightleftarrows \CAlg(\cC)\noloc (-)^{\times}\]
and writing $I(-)= \map_R(-,I)$ for the internal mapping object, for any
$M\in \Mod{\pSpcn{\tB}}{R}$, the data above gives rise a natural map
\[ \mdef{\Four_{\omega}^{M}}\colon \1_{\cC}[M] \too \1_{\cC}^{\ulSpc{I(M)}} \in \CAlg(\cC)\,,\]
which we call the \tdef{Fourier transform} associated to the datum $(R,I,\cC,\omega)$.

In \cref{ssec: cat Fourier}, we apply this formalism to construct a compact object preserving Fourier-Mukai transform.
Concretely, we work over the topos $\Ettop{\base}$ of \'etale
sheaves of spaces on $\Sch_{\base}$ for some base scheme $\base$.
Our chosen Fourier context is  $(\ZZ_{\base},\Sigma \Gm)$, where $\ZZ_{\base}$ is 
the constant sheaf and $\Sigma \Gm$ is the sheaf of connective
$\ZZ$-modules represented by the \'etale delooping of the commutative group scheme $\Gm$.

The functor taking a stack $\stack \in \Stk{\base}$ to its category 
of perfect complexes $\Perf(\stack)$ defines an \'etale sheaf on $\Sch_{\base}$ with values
in $\Catperf$ and we denote the associated $\Ettop{\base}$-category by $\mdef{\pQCoh{\base}}$.
Then, the parametrized category of modules over $\pQCoh{\base}$ comes equipped with a 
natural orientation map 
\[\orcan\colon\Sigma \Gm \too (\pQCoh{\base})^{\times}\]
induced by the inclusion $\Perf(\base)^{\heart}\to \Perf(\base)$. 
For any $\ZZ_{\base}$-module $M$ we write $M^{\vee}=\map_{\ZZ_{\base}}(M,\Sigma \Gm)$
which we call the \tdef{shifted Cartier dual} of $M$.
The formalism of \cref{ssec: parametrized Fourier} then provides us with a 
natural map
\[\Four_{\orcan}^{M} \colon \pQCoh{\base}[M] \too \left(\pQCoh{\base}\right)^{\ulSpc{M^{\vee}}},\]
of commutative algebra objects in the category $\pMQC{\base}$ 
which we call the \tdef{categorical Fourier transform}. 
In particular, on global sections
$\Four_{\orcan}^{M}$ defines an exact, symmetric monoidal, left adjoint 
functor between small, stable, idempotent complete, symmetric monoidal categories. 

In \cref{ssec: motivic Fourier}, we use the categorical Fourier transform to construct
a Fourier transform in the category of modules over the motivic $K$-theory spectrum 
$\KGL_{\base}\in \SH{\base}$ by applying $K$-theory sectionwise and then localizing
into the category of $\KGL_{\base}$-modules in $\SH{\base}$. More precisely, if $M$
is an \'etale sheaf of connective $\ZZ$-modules, such that the dual $M^\vee$ is 
representable by a \emph{stack} in the sense of \cref{ter: stack},
we construct natural map of $\KGL_{\base}$-algebras in $\SH{\base}$
\[ \Four_{\rm{mot}}^{M}\colon \KGL_{\base}[M] \too \KGL_{\base}^{\ulSpc{M^{\vee}}},\]
which we call the \tdef{motivic Fourier transform}. Here, the left hand side is the
$\KGL$-homology of $M$ defined via Kan-extension from smooth representables and the 
right hand side is the $\KGL$-motive of the stack $\Omega^\infty M^{\vee}$, 
as defined in \cref{not: KGL cohom}.

We conclude this section by proving that, for any torus $\t/\base$, the motivic Fourier
transform provides an equivalence 
\[ \KGL_{\base}[\cow{\t}]\xrightarrow{\sim} \KGL_{\base}^{\deloop{\t}},\]
where $\cow{\t} = \deloop{\t}^{\vee}$ is the sheaf of characters $\t \to \Gm$.
This will be a crucial input for proving our main theorem in \cref{sec: Motivic EM}.
          
\subsection{Parametrized Fourier theory}\label{ssec: parametrized Fourier}

  Let $\cC\in \CAlg(\pPrl{\tB})$ be a presentably symmetric monoidal $\tB$-category.
  Then $\cC$ comes equipped with a unit map
  \[\Omega_\tB\too \cC \]
  which is left Kan extended from the functor $\pt \xrightarrow{\1_{\cC}} \cC$. 
  Taking commutative algebras objects, we obtain a map of presentable $\tB$-categories
  \[ \mdef{\1_{\cC}[-]} \colon \pSpcn{\tB} 
   \hookrightarrow \CAlg(\Omega_{\tB}) \too \CAlg(\cC) \]
   which we call the \tdef{group algebra} construction. We denote the parametrized
   right adjoint by
   \[ \mdef{(-)^{\times}}\colon \CAlg(\cC)\too \pSpcn{\tB} \,, \]
  and refer to it as the \tdef{spectrum of units} functor.
   Explicitly $(-)^{\times}$ is given by the composition of the mapping object 
   $\map_{\cC}(\1_{\cC},-)$ with the functor of \Cref{constr: units}.
   
  Dually, by parametrized right Kan extension of the map 
  $\pt \xrightarrow{\1_{\cC}} \CAlg(\cC)$ in $\Cat(\tB)$, we obtain a right
  adjoint functor
  \[ \mdef{\1_{\cC}^{(-)}}\colon \Omega_{\tB}^{\op}\too \CAlg(\cC) \,.\]

  For any $A\in \tB$, we think of $\1_{\cC}[A]$ as the \tdef{homology}
  and of $\1_{\cC}^{A}$ as the \tdef{cohomology} of $A$ with coefficients in $\cC$.
  Note that this is consistent with the notation of \cref{example: parametrized colimit}.

\begin{defn}\label{defn: Four orientation}
A \tdef{Fourier context} is a pair \mdef{$(R,I)$} consisting of a \textit{base ring}
$R\in \CAlg(\pSpcn{\tB})$ and a \textit{dualizing object}
$I\in \Mod{\pSpcn{\tB}}{R}$. For any $\cC\in \CAlg(\pPrl{\tB})$ a
\tdef{Fourier orientation}\footnote{Called a \emph{pre}-orientation in~\cite{chromfour}.}
of $\cC$ with respect to $(R,I)$ is a map of parametrized connective spectra
  \[ \mdef{\omega}\colon I \too \1_{\cC}^\times.\] 
  For any $R$-module $M\in \Mod{\pSpcn{\tB}}{R}$ we write
  \[ \mdef{I(M)}\coloneq \map_R(M,I) \in \Mod{\pSpcn{\tB}}{R}\]
  for the internal mapping object in connective $\tB$-spectra.
\end{defn}
    
The following is the parametrized analog of~\cite[\href{https://arxiv.org/pdf/2210.12822\#thm.3.10}{Prop.~3.10}]{chromfour} and the proof is essentially the same.
\begin{prop}\label{Fourier thm}
  Let $\cC\in \CAlg(\pPrl{\tB})$ and let $(R,I)$ be a Fourier context.
  There is a natural equivalence
  between the space of parametrized natural transformations
  \[\1_\cC[-]\longrightarrow\1_\cC^{\ulSpc{I(-)}} ,\] 
  of parametrized functors $\Mod{R}{\Spcn_{\tB}} \to \CAlg(\cC)$ and the space of
  Fourier orientations, i.e.~the space of maps of parametrized connective spectra
  \[  \omega \colon I \too \1_\cC^\times\,.\]
\end{prop}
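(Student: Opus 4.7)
The plan is to imitate the non-parametrized proof from \cite[Prop.~3.10]{chromfour}, adapted to the $\tB$-parametrized setting by working with internal mapping objects, parametrized adjunctions, and the parametrized Yoneda lemma. The high-level idea is that after unfolding the relevant adjunctions, the classification of natural transformations collapses to a single application of Yoneda to the corepresentable functor $\map_R(-,I)$.

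Concretely, I would proceed in three steps. First, applying the parametrized group algebra--units adjunction of \Cref{constr: units} pointwise, and using that the right adjoint $(-)^\times$ commutes with the space cotensor, identifies
\[ \Map_{\CAlg(\cC)}\bigl(\1_\cC[M],\, \1_\cC^{\ulSpc{I(M)}}\bigr) \simeq \Map_{\pSpcn{\tB}}\bigl(M,\, (\1_\cC^\times)^{\ulSpc{I(M)}}\bigr). \]
Second, the space-cotensor adjunction in $\pSpcn{\tB}$ rewrites this pointwise as
\[ \Map_{\Omega_\tB}\bigl(\ulSpc{I(M)},\, \ulSpc{\map_{\pSpcn{\tB}}(M,\, \1_\cC^\times)}\bigr). \]
The crucial observation is that $\ulSpc{I(M)} = \ulSpc{\map_R(M,I)}$ is precisely the internal mapping object of $R$-modules evaluated at $(M, I)$, viewed in $\Omega_\tB$, since the forgetful functor from $R$-modules to $\Omega_\tB$ commutes with internal Hom. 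Third, assembling the pointwise equivalences into an end and observing that after the rewrite both functors are contravariant in $M$, the space of natural transformations becomes the $\tB$-parametrized natural transformations from the representable $M \mapsto \ulSpc{\map_R(M, I)}$ to $M \mapsto \ulSpc{\map_{\pSpcn{\tB}}(M, \1_\cC^\times)}$ as functors $\Mod{\pSpcn{\tB}}{R}^{\op} \to \Omega_\tB$. The parametrized Yoneda lemma of~\cite{mwyoneda} then identifies this with the value at $I$ of the target functor, namely $\Map_{\pSpcn{\tB}}(I, \1_\cC^\times)$, which is precisely the space of Fourier orientations.

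The main technical subtlety I expect is variance bookkeeping: the original functors $\1_\cC[-]$ and $\1_\cC^{\ulSpc{I(-)}}$ are both covariant in $M$, whereas after passing through the cotensor adjunction the end is most conveniently phrased over $\Mod{\pSpcn{\tB}}{R}^{\op}$; matching the two ends uses the canonical self-duality of the underlying twisted arrow data. Unwinding the chain of identifications, the orientation $\omega\colon I\to \1_\cC^\times$ corresponds to the natural transformation $\Four^M_\omega$ whose classifying map $M \to (\1_\cC^\times)^{\ulSpc{I(M)}}$ is obtained by combining $\omega$ with the evaluation pairing $M \otimes_R \map_R(M,I) \to I$ through the tensor--hom adjunction in $\pSpcn{\tB}$, recovering the expected explicit description of the Fourier transform.
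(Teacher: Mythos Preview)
Your argument is correct, and it is genuinely different in structure from the paper's proof, though both ultimately rest on the same underlying adjunctions.

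The paper does not unwind the adjunctions pointwise and then appeal to Yoneda. Instead, it shows directly that the target functor $M \mapsto \1_\cC^{\ulSpc{I(M)}}$ is the parametrized \emph{right Kan extension} of $\ast \xrightarrow{\1_\cC} \CAlg(\cC)$ along $\ast \xrightarrow{I} \Mod{\pSpcn{\tB}}{R}$. The identification goes in two steps: the map $\ast \xrightarrow{I} \Mod{\pSpcn{\tB}}{R}$ factors through a right adjoint $\Omega_\tB^{\op} \to \Mod{\pSpcn{\tB}}{R}$ whose left adjoint is $M \mapsto \ulSpc{\map_R(M,I)}$, and right Kan extension along a right adjoint is precomposition with its left adjoint. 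The universal property of right Kan extension then collapses the space of natural transformations to $\Map_{\CAlg(\cC)}(\1_\cC[I],\1_\cC)$, and a single application of the group-algebra/units adjunction finishes.

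What each approach buys: the paper's route avoids the variance-switching step entirely, since the Kan extension universal property already packages the end computation; it is shorter and does not require invoking the parametrized Yoneda lemma explicitly. Your route is more hands-on and makes the classifying map of $\Four_\omega^M$ visible at the end, which the paper's argument does not. The ``self-duality of twisted arrow data'' you flag is a genuine (if standard) bookkeeping step that the paper's proof sidesteps; your handling of it is fine, but it is the one place a careful reader might ask you to spell out more.
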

\begin{proof}
We claim that the parametrized right Kan extension of the functor 
$\ast \xrightarrow{\1_{\cC}} \CAlg (\cC)$ along the map
$\ast \xrightarrow{I} \Mod{\pSpcn{\tB}}{R}$ is given by the functor sending 
$M$ to $\1_{\cC}^{\ulSpc{I(M)}}$. 
As $\ast \to \Omega_{\tB}$ is fully faithful, the latter map factors through the map $\Omega_{\tB}^{\op} \to \Mod{\pSpcn{\tB}}{R}$ obtained by parametrized right Kan extension. 
But this map has a parametrized left adjoint, given by sending $M$ to $\ulSpc{\map_R(M,I)}$.
The claim follows because parametrized right Kan extension along a parametrized right adjoint is computed by precomposition with the left adjoint. 

Using the universal property of the parametrized right Kan extension, we finally deduce that

 \[ \Map_{\Fun(\Mod{\pSpcn{\tB}}{R},\CAlg(\cC))}\left(\1_{\cC}[-], \1_{\cC}^{\ulSpc{I(-)}}\right)
 \simeq \Map_{\CAlg(\cC)}(\1_{\cC}[I],\1_\cC) \,. \] 
  The claim follows as the spectrum of units was defined to be the right adjoint of the group algebra functor.
\end{proof}

\begin{defn}\label{defn: Fourier}
  Let $(R,I)$ be a Fourier context in a parametrized
  presentably symmetric monoidal category $\cC\in \CAlg(\pPrl{\tB})$. By \cref{Fourier thm}, any orientation
  $\omega\colon I \to \1_{\cC}^\times$ yields an associated natural transformation
  \[ \mdef{\Four_{\omega}^{(-)}}\colon \1_{\cC}[-]\too \1_{\cC}^{\ulSpc{I(-)}}\] of functors
  $\Mod{\pSpcn{\tB}}{R}\to \CAlg(\cC)$. We call $\Four_{\omega}$ the \tdef{Fourier
    transform} associated to $\omega$.
\end{defn}

\subsection{A Fourier--Mukai transform for \texorpdfstring{$\Perf$}{Dperf}-linear categories}\label{ssec: cat Fourier}

Throughout this section we fix a base scheme $\base$
and work over the topos of \'etale sheaves on the category of $\base$-schemes
$\mathrm{Sch}_{\base}$ which we denote as
\[ \mdef{\Ettop{\base}}\coloneq\Shv_{\et}(\Sch_{\base};\Spc)\,.\] 
Note that, since any stack over $\base$ is \'etale locally representable, 
we also have a natural equivalence $\Ettop{\base}\simeq \Shv_{\et}(\Stk{\base};\Spc)$
so it does not matter which site we use to produce our base topos.

We now use the machinery developed in the previous section to set up a
Fourier--Mukai transform for the category of perfect complexes.

Recall from the discussion in \cref{ssec: mot k theory} that for any
$\stack \in \Stk{\base}$ we have a well behaved notion of perfect complexes on $\stack$,
namely the category $\Perf(\stack)= \QCoh(\stack)^{\omega} \in \Catperf$.
By~\cite[Cor.~4.25]{bgt13} the category $\Catperf$ is compactly generated and presentable.
Moreover, the inclusion $\Catperf \simeq \cPrlst \to \Prlst$ preserves colimits and is closed
under tensor products and so $\Catperf$ inherits the structure of a presentably 
symmetric monoidal category and the forgetful functor $\Catperf \to \Cat$ preserves
limits. In particular, the functor $\Perf(-)$ refines to an \'etale sheaf of 
symmetric monoidal categories, which we record in the following definition.

\begin{defn}\label{defn: Qcoh parametrized}
For any scheme $\base$, we write 
  \[ \mdef{\pQCoh{\base}} \in \CAlg(\Omega_{\Ettop{\base}} \otimes \Catperf)\]
 for the sheaf of categories given by the functor
  \[\pQCoh{\base}\colon \Stk{\base}^{\op}\too \CAlg(\Perf), \quad X \mapsto 
  \Perf(X).\]
  Moreover, we denote the $\Ettop{\base}$-parametrized category of modules over 
  $\pQCoh{\base}$ as
  \[ \mdef{\pMQC{\base}} \coloneq 
  \Mod{\Catperf \otimes \Omega_{\Ettop{\base}}}{\pQCoh{\base}}
  \in \CAlg (\pPrl{\Ettop{\base}})\]
    and refer to sections of $\pMQC{\base}$ as \tdef{\'etale $\base$-linear categories}.
\end{defn}

Explicitly, the parametrized category $\pMQC{\base}$ sends $A \in \Ettop{\base}$
to the presentably symmetric monoidal category 
\[\MQC(A)= \Mod{\Ettop{A} \otimes \Catperf}{\pQCoh{A}}\,.\]
Note that, by \cref{rem: param modules + algebras} the fact that the category
$\Catperf$ is presentably symmetric monoidal, implies that the category 
$\pMQC{\base}$ is \emph{parametrized} presentably symmetric monoidal.

\begin{war}
Note that, if $A\in \Ettop{\base}$ is not representable by a stack, then the
category $\Gamma_{A}\pQCoh{\base}$ as defined by the Kan extension above need not agree with
the category of compact objects of $\QCoh(A)$ as defined via Kan extension of $\QCoh(-)$
as a functor into $\Prlst$. 
\end{war}
Let $f\colon A\to \base$ be an object of $\Ettop{\base}$. 
Since the category $\pMQC{\base}$ is parametrized presentable, the pullback functor 
\[ f^{\ast}\colon \MQC(\base)\too \MQC(A)\]
admits both adjoints $f_{\sharp} \dashv f^{\ast} \dashv f_{\ast}$ which are worth
discussing explicitly. First, note that $f^{\ast}$ is given by
restriction along the pushforward map $(\Ettop{\base})_{/A} \too \Ettop{\base}$.
Thus, the functor $f_{\ast}$ is computed by right Kan extension
along the same functor and is thus given by precomposition with the left adjoint.
In particular, for any $\cD \in \MQC(\fZ)$ we have that $f_{\ast}\cD$ 
is the sheaf of $\pQCoh{\base}$-modules given by
\[ f_{\ast}\cD(B)= \cD(f^{\ast}B)=\cD(A\times_\base B) \in \Mod{\Catperf}{\pQCoh{\base}(B)}.\]
The left adjoint $f_{\sharp}$ is computed by left Kan extension over the slice, which 
is in general mysterious. 
However, if $f$ is finite \'etale, the adjoints will agree in any 0-semiadditive category
as the adjoints are locally given by product and coproduct. Let us call a 
$\Ettop{\base}$-category $\cC$ \tdef{finite \'etale (co)-complete} if it is (co)-complete
with respect to the inductible class of finite \'etale morphisms in the sense 
of \cite[\href{https://arxiv.org/pdf/2403.07676\#definition.2.10}{Def.~2.10}]{ppp}.

\begin{lem}\label{lem: fet semi-add}
    Let $\cC$ be a finite \'etale complete and cocomplete $\Ettop{\base}$-category that is
    sectionwise $0$-semiadditive. Then for any finite \'etale map
    $f\colon X \to \base$ we have a natural equivalence
    $\rm{Nm}_{f}\colon f_{\sharp}\xrightarrow{\sim} f_\ast$ of functors
    $\cC(X)\to \cC(S)$.
\end{lem}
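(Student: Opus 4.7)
My plan is to exploit the fact that finite \'etale maps are \'etale-locally fold maps, so that the statement should reduce by descent to the sectionwise $0$-semiadditivity hypothesis. In brief: construct the norm map, then check it is an equivalence after restricting to an \'etale cover that splits $f$, where it becomes the canonical matrix map from a finite coproduct to a finite product.

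\textbf{Step 1: Construction of the norm.} I would first produce a candidate natural transformation $\mathrm{Nm}_f\colon f_\sharp \to f_\ast$, naturally in finite \'etale $f$. The geometric input is that for finite \'etale $f\colon X \to \base$, the diagonal $\Delta \colon X \to X\times_\base X$ is a clopen immersion (as $f$ is \'etale and separated), hence exhibits $X$ as a direct summand of $X\times_\base X$. This gives $\Delta$ an ambidexterity datum on $\cC$ (restricted to slices over $\base$), from which the standard double-adjunction/Beck--Chevalley procedure produces the desired norm map. Alternatively, one may invoke the general ambidexterity theory for the inductible class of finite \'etale maps as in the reference~\cite[\href{https://arxiv.org/pdf/2403.07676\#definition.2.10}{Def.~2.10}]{ppp}, which is the slickest route.

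\textbf{Step 2: Reduction to the fold map.} Choose an \'etale cover $p\colon \base'\to \base$ splitting $f$, so that the pullback $f'\colon X'\to \base'$ is isomorphic to the fold map $\bigsqcup_{i=1}^n \base' \to \base'$. Since $\cC$ is cocomplete, Beck--Chevalley for $(-)_\sharp$ along the pullback square gives $p^\ast f_\sharp \simeq f'_\sharp q^\ast$, and the norm map is natural in pullbacks by construction. Being an equivalence is checked sectionwise, and $\cC$ satisfies \'etale descent, so it suffices to verify $\mathrm{Nm}_{f'}$ is an equivalence after iterating the argument on the \v{C}ech nerve of $p$.

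\textbf{Step 3: The fold map case.} For $f'$ a fold map, the sheaf condition gives $\cC(\bigsqcup_{i=1}^n \base') \simeq \cC(\base')^n$, and $f'^\ast$ identifies with the diagonal. Its left and right adjoints are then the $n$-fold coproduct and product, and $\mathrm{Nm}_{f'}$ identifies with the canonical matrix map $\coprod_i x_i \to \prod_i x_i$, which is an equivalence by the assumed sectionwise $0$-semiadditivity of $\cC$.

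\textbf{Main obstacle.} The delicate step is the first one: producing a globally defined norm map that is compatible with arbitrary base change. The descent reduction in Steps~2 and~3 is then essentially bookkeeping, so the real work is packaging the clopen-diagonal geometric fact into the parametrized ambidexterity framework (or, equivalently, invoking the relevant result from the cited parametrized higher algebra literature).
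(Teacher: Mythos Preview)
Your proposal is correct and follows essentially the same route as the paper: construct the norm via the parametrized ambidexterity framework of \cite{ppp}, then use Beck--Chevalley to reduce to an \'etale cover splitting $f$ into a fold map, where the norm becomes the canonical matrix map and is an equivalence by sectionwise $0$-semiadditivity. The only cosmetic difference is that the paper packages Step~1 via the inductive construction of \cite[Cons.~3.3]{ppp} (using that $(-1)$-truncated maps in $\Ettop{\base}$ are empty or equivalences) rather than the clopen-diagonal description, but this is the same underlying mechanism.
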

\begin{proof}
    As any $(-1)$-truncated map in $\Ettop{\base}$ is either empty or an 
    equivalence and $\cC$ is sectionwise pointed, we may construct a comparison
    map 
    \[ \mathrm{Nm}_{f} \colon f_{\sharp} \too f_{\ast},\]
    as in \cite[\href{https://arxiv.org/pdf/2403.07676\#construction.3.3}{Cons.~3.3}]{ppp}
    inductively. As $f$ is finite
    \'etale, it is \'etale locally given by a fold map
    $\sqcup_n S \to \base$ for some finite $n$. Thus, as $\cC$ is finite \'etale
    complete and cocomplete, we may use the $(-)_\ast$ and $(-)_{\sharp}$ 
    Beck--Chevalley isomorphisms to see that $\mathrm{Nm}_{f}$ is \'etale locally
    given by the identity matrix 
    \[ \coprod_n S \too \prod_{n} S\,,\] 
    and thus an equivalence by the assumption that $\cC$ is 0-semiadditive. 
    As we can check equivalences \'etale locally, the claim follows.
\end{proof}

In particular \cref{lem: fet semi-add} applies to $\pMQC{\base}$ as it is even
parametrized presentable and sectionwise $0$-semiadditive, as well as any sheaf category
$\Ettop{\base} \otimes \cC$ where $\cC$ is presentable and 0-semiadditive.

Let us now define our chosen Fourier context.
Recall that we denote
by $\ZZ_{\base}\in \pSpcn{\Ettop{\base}}$ the \'etale sheafification of the constant presheaf
with value $\ZZ$. We write 
\[ \mdef{\ZMod{\base}}\coloneq \Mod{\pSpcn{\Ettop{\base}}}{\ZZ_{\base}} \]
for the parametrized category of $\ZZ_{\base}$-modules. 

\begin{notation}
Let $\cE$ be a commutative group scheme over $\base$. Then, via Yoneda, we can
regard $\cE$ as an \'etale sheaf of abelian groups on $\Sch_{\base}$ and thus via
the Dold--Kan correspondence as an object of $\ZMod{\base}$. As such, we have a
canonical identification
\[ \ulSpc{\Sigma\cE} \simeq \deloop{\cE} \in \Ettop{\base}\]
where $\deloop{\cE}$ denotes the \'etale classifying stack. We make this identification
implicitly during the rest of this section.
\end{notation}

\begin{defn}\label{defn: picard}
    For any $M\in \ZMod{\base}$, we call the internal mapping objects
    \[ \mdef{\cow{M}}\coloneq \map_{\ZZ_{\base}}(M,\Gm) \in \ZMod{\base}\]
  \[ \mdef{M^{\vee}} \coloneq \map_{\ZZ_{\base}}(M,\Sigma \Gm) \in \ZMod{\base}\]
 the \tdef{Cartier dual} and \tdef{shifted Cartier dual} of $M$ respectively.
\end{defn}

\begin{rem}\label{defn: char lattice}
Let us say some words about these two notions of duality.
\begin{enumerate}
    \item By definition, classes in the Cartier dual 
          $\Lambda(M)$ correspond to characters maps $\chi \colon M \to \Gm$. 
          If $M$ is given by an algebraic torus $\t$, then after passing to a finite \'etale
          extension of $\base$ that splits $\t$ the sheaf $\cow{\t}$ is constant and
          given by a free abelian group of finite rank, i.e.~a lattice. Thus, in
          this situation, we also call $\cow{\t}$ the \tdef{character lattice of $\t$}.
    \item Using the identification $\Sigma \Gm \simeq \pic_{\base}$ of \cref{lem: BGm is pic},
          we see that classes of the shifted Cartier dual $M^{\vee}$ are given by rank $0$ line bundles $\cL\in \pic_{\base}(M)$. Thus, if $M=\cE$ is an abelian
          variety, the shifted Cartier dual $\cE^{\vee}$ is representable and recovers the dual abelian variety
          of $\cE$.
\end{enumerate}
\end{rem}

    Since the \'etale suspension functor is fully faithful, it provides a
    canonical identification
    \[\cow{M}\simeq \map_{\ZZ_{\base}}(\Sigma M, \Sigma \Gm)= \pic_{\base}(\Sigma M)
    = (\Sigma M)^{\vee} \in \ZMod{\base},\]
    which we will use freely. Moreover, for any $M\in \ZMod{\base}$, the equivalence
    \[\cow{M} \simeq \Omega \map_{\ZZ_{\base}}(M, \Sigma \Gm) =\Omega M^{\vee}, \]
    induces a natural comparison map 
    \[ \Sigma \cow{M} \too M^{\vee}.\]
    
\begin{lem}\label{lem: torus dual computation}
  Let $\t$ be a torus over $\base$ and consider $\t$ as an object of $\ZMod{\base}$. 
  The natural maps 
  \[ \Sigma \cow{\t}\too \t^{\vee}\]
  \[  (\t^{\vee})^{\vee}\too \t\]
  \[  ((\Sigma\t)^{\vee})^{\vee}\simeq \cow{\t}^\vee \too \Sigma{\t},\]
  are equivalences of \'etale connective $\ZZ_{\base}$-modules.
\end{lem}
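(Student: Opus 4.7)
The plan is to reduce all three claimed equivalences to the case $\t = \Gm$ via étale descent and additivity, and then verify them using the single computation $\map_{\ZZ_\base}(\Gm,\Gm) \simeq \ZZ_\base$.

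First, every sheaf appearing in the statement is built from $\t$ by the parametrized internal mapping object $\map_{\ZZ_\base}(-,-)$ and sheafified suspension, both of which are preserved by restriction along étale morphisms. Equivalences in $\ZMod{\base}$ can be checked étale locally, so after a finite étale cover $\base' \to \base$ splitting $\t$, we may assume $\t = \Gm^n$. Second, $\ZMod{\base}$ is semiadditive, so $\Gm^n \simeq \bigoplus^n \Gm$ in $\ZMod{\base}$, and both $\Sigma$ and the functors $\map_{\ZZ_\base}(-,\Gm)$, $\map_{\ZZ_\base}(-,\Sigma\Gm)$ distribute over this decomposition (the mapping objects send finite sums to finite products, which coincide with sums by semiadditivity; $\Sigma$ is a left adjoint). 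This reduces each of the three claims to the case $\t = \Gm$.

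For $\t = \Gm$, the core input is that $\cow{\Gm} = \map_{\ZZ_\base}(\Gm,\Gm) \simeq \ZZ_\base$ as a connective $\ZZ_\base$-module concentrated in degree $0$. On $\pi_0$ this is the classical identification of group-scheme endomorphisms of $\Gm$ with integer powers $t \mapsto t^k$; the higher $\pi_i$ vanish, which follows from the vanishing of the étale Ext-sheaves $\underline{\operatorname{Ext}}^i_{\ZZ_\base}(\Gm,\Gm)$ for $i>0$. Granting this, the $(\Sigma,\Omega)$-adjunction in connective $\ZZ_\base$-modules gives $\Gm^{\vee} = \map_{\ZZ_\base}(\Gm,\Sigma\Gm) \simeq \Sigma\map_{\ZZ_\base}(\Gm,\Gm) \simeq \Sigma\ZZ_\base \simeq \Sigma\cow{\Gm}$ (the connective cover is trivially taken because $\Sigma\ZZ_\base$ is already $1$-connective), proving claim (1) after checking the comparison map is this equivalence. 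Claim (2) then follows from $(\Gm^{\vee})^{\vee} \simeq \map_{\ZZ_\base}(\Sigma\ZZ_\base,\Sigma\Gm) \simeq \map_{\ZZ_\base}(\ZZ_\base,\Gm) \simeq \Gm$. For claim (3), the same adjunction gives $(\Sigma\Gm)^{\vee} \simeq \map_{\ZZ_\base}(\Gm,\Gm) \simeq \cow{\Gm}$, and then $\cow{\Gm}^{\vee} \simeq \map_{\ZZ_\base}(\ZZ_\base,\Sigma\Gm) \simeq \Sigma\Gm$.

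The main obstacle is the connective computation $\map_{\ZZ_\base}(\Gm,\Gm) \simeq \ZZ_\base$: the $\pi_0$ identification is classical, but the vanishing of the higher Ext sheaves in $\ZMod{\base}$ is a genuine derived-categorical input that must be argued separately (for instance via local injectivity/divisibility properties of $\Gm$ as an étale sheaf of abelian groups, or by a direct resolution). Once this input is in place, everything else is a formal manipulation with the $(\Sigma,\Omega)$-adjunction together with the semiadditivity reduction.
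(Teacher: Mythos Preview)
Your reduction to $\t=\Gm$ via \'etale descent and additivity, and the formal deductions of claims (2) and (3) from claim (1), match the paper's proof exactly. The difference lies in how claim (1) is argued, and your version has a small but real confusion.

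You write that the ``higher $\pi_i$'' of $\map_{\ZZ_\base}(\Gm,\Gm)$ vanish as a consequence of $\underline{\operatorname{Ext}}^i(\Gm,\Gm)=0$ for $i>0$. This is backwards: since $\Gm$ is discrete, the connective mapping object $\map_{\ZZ_\base}(\Gm,\Gm)$ has vanishing $\pi_i$ for $i>0$ automatically, and $\pi_0=\ZZ_\base$ is just the classical endomorphism computation; no Ext input is needed there. The place where a vanishing result is genuinely required is the next step: the $(\Sigma,\Omega)$-adjunction only gives $\Omega\,\map_{\ZZ_\base}(\Gm,\Sigma\Gm)\simeq\map_{\ZZ_\base}(\Gm,\Gm)$, and to deloop this into $\map_{\ZZ_\base}(\Gm,\Sigma\Gm)\simeq\Sigma\ZZ_\base$ you must separately know that $\pi_0\,\map_{\ZZ_\base}(\Gm,\Sigma\Gm)=\underline{\operatorname{Ext}}^1(\Gm,\Gm)$ vanishes. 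Your parenthetical ``the connective cover is trivially taken because $\Sigma\ZZ_\base$ is already $1$-connective'' presupposes what you are trying to prove.

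The paper handles this point more directly and more economically: it identifies $\map_{\ZZ_\base}(\Gm,\Sigma\Gm)$ with $\pic_\base(\Gm)$ and invokes the classical fact that every line bundle on $\Gm$ is trivial to conclude $\pi_0=0$. This requires only the degree-$1$ vanishing, not all higher $\underline{\operatorname{Ext}}^i$. Once you relocate your Ext-vanishing input to the correct step and note that only $i=1$ is needed, your argument is correct and coincides with the paper's.
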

\begin{proof}
 Since both sides are \'{e}tale sheaves, we can check this \'etale locally, so we
 can assume that $T\simeq \Gm^{\times n}$ is split. Moreover, since
everything in sight commutes with products, we can take $n=1$. 
The first claim is then that the map 
\[ \Sigma \ZZ_{\base} \too \map_{\ZZ_{\base}}( \Gm, \Sigma \Gm) = \pic_{\base}(\Gm)\]
is an equivalence, which follows since every line bundle over $\Gm$ is trivial.
Using the first equivalence, the second map is identified with the composite
 \[ (\Sigma \ZZ_{\base})^\vee = \map_{\ZZ_{\base}}(\Sigma \ZZ_{\base}, \Sigma{\Gm})
 \simeq \map_{\ZZ_{\base}}(\ZZ_{\base}, \Gm) \simeq \Gm,\]
where we have again used fully faithfulness of \'etale suspension. 
Finally, the third map simply becomes the canonical equivalence
 \[ \ZZ_{\base}^\vee = \map_{\ZZ_{\base}}(\ZZ_{\base}, \Sigma{\Gm})\simeq \Sigma{\Gm}\]
 and so we are done.
\end{proof}

Recall that, since $\pMQC{\base}$ is presentably symmetric monoidal, we have a group ring functor
\[ \pQCoh{\base}[-] \colon \pSpcn{\Ettop{\base}} \too \CAlg(\pMQC{\base}),\]
which admits a parametrized right adjoint $(-)^{\times}$. Unwinding the definitions,
we see that any $\cC\in\CAlg(\pMQC{\base})$ has an underlying sheaf 
with values in $\CAlg(\Catperf)$ and the sheaf of units 
$\cC^{\times}$ is sectionwise given by the Picard spectrum
\[ \cC^{\times}(A) \simeq \pic(\cC(A)) \in \Spcn\,,\]
i.e.~the grouplike $\EE_\infty$-space of $\otimes$-invertible objects in $\cC(A)$.
For any $X\in \Sch_{\base}$, equip the category $\Perf(X)$ with the standard
$t$-structure, see also the discussion in \cref{ssec: Gluing the Fourier},
such that the heart $\Perf(X)^{\heart}$ is an abelian 
$1$-category, equipping $\pic(\Perf(X)^{\heart})$ with a natural $\ZZ$-linear structure.

We write $\mdef{\pic_{\base}} \in \ZMod{\base}$ for the \'etale sheaf which takes 
$X\in \Sch_{\base}$ to the $\ZZ$-linear connective spectrum $\pic(\Perf(X)^{\heart})$. 
Observe that, for any any $A\in \Sch_{\base}$ we have natural equivalences of abelian groups
\begin{equation}\label{eq: loop pic}
  \Omega \pic_\base(A) \simeq (\Omega\QCoh(A)^{\heartsuit})^\times
  \simeq (\cO_{A})^\times \simeq \Gm(A) 
\end{equation}
Moreover, we can actually de-loop this identification.

\begin{lem}\label{lem: BGm is pic}
  The natural map  $\Sigma \Gm \too \pic_\base$
  induced by~\eqref{eq: loop pic} is an equivalence of \'etale 
  $\ZZ$-linear connective spectra.
\end{lem}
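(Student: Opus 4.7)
The plan is to reduce the claim to the vanishing of the sheafified Picard group and then invoke the fact that line bundles are étale-locally trivial.

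By the warning box preceding the lemma, the suspension functor $\Sigma \colon \ZMod{\base} \to \ZMod{\base}$ is left adjoint to $\Omega$ (computed sectionwise on connective spectra) with $\Omega \circ \Sigma \simeq \id$. In particular $\Sigma$ is fully faithful, and its essential image consists exactly of the $1$-connective objects: for any $N \in \ZMod{\base}$ with $\pi_0^{sh}(N) = 0$, the counit $\Sigma \Omega N \to N$ induces isomorphisms on all homotopy sheaves $\pi_i^{sh}$. The map $\Sigma \Gm \to \pic_\base$ is the adjoint of the equivalence $\Gm \xrightarrow{\sim} \Omega\pic_\base$ of~\eqref{eq: loop pic}, and hence factors as
\[
    \Sigma\Gm \xrightarrow{\sim} \Sigma\Omega\pic_\base \xrightarrow{\varepsilon} \pic_\base,
\]
the first map being $\Sigma$ applied to an equivalence and the second being the counit. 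Therefore the map in question is an equivalence if and only if $\pic_\base$ is $1$-connective.

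The remaining input is the vanishing $\pi_0^{sh}(\pic_\base) = 0$. Sectionwise, $\pic(\Perf(X)^\heart)$ is the Picard spectrum of a symmetric monoidal $1$-category and satisfies $\pi_0 \pic(\Perf(X)^\heart) = \mathrm{Pic}(X)$, the group of isomorphism classes of line bundles on $X$. Thus $\pi_0^{sh}(\pic_\base)$ is the étale sheafification of the presheaf $X \mapsto \mathrm{Pic}(X)$. Since every line bundle is Zariski-locally, and a fortiori étale-locally, isomorphic to the structure sheaf, every such class is trivialized on some étale cover and this sheafification vanishes.

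The main obstacle is merely in chasing the adjunction data recorded by the warning box and identifying the essential image of $\Sigma$ with the $1$-connective objects; once that is set up, the geometric input reduces to the elementary fact that line bundles are Zariski-locally trivial.
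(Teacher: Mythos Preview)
Your proof is correct and takes essentially the same approach as the paper: both factor the map as $\Sigma\Gm \xrightarrow{\sim} \Sigma\Omega\pic_\base \to \pic_\base$ and reduce to showing the counit is an equivalence via the étale-local triviality of line bundles. You simply spell out in more detail what the paper phrases as ``$\pic_\base$ is étale locally connected,'' namely that $\pi_0^{sh}(\pic_\base)=0$ and that this characterizes the essential image of $\Sigma$.
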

\begin{proof}
  It follows from the discussion above that the natural map $\Sigma \Gm \too \Sigma \Omega \pic_\base$ is an equivalence,
  so it suffices to argue that the counit $\Sigma \Omega \pic_\base \to \pic_\base$ is an equivalence.
  This is true because every line bundle is
  \'etale locally trivial, so that $\pic_\base$ is \'etale locally connected.
\end{proof}

The observations above allows us define the desired orientation of the parametrized 
category $\pMQC{\base}$. In particular, we get an associated Fourier transform
as in \cref{defn: Fourier}.

\begin{defn}\label{defn: can orientation + fourier}
  The \tdef{canonical orientation} of the $\Ettop{\base}$-category
  $\pMQC{\base}$ in the Fourier context $(\ZZ_{\base},\Sigma \Gm)$ is given
  by the natural map of \'etale connective spectra
  \[ \mdef{\orcan} \colon \Sigma \Gm \too (\pQCoh{\base})^\times \]
  which sectionwise on $X\in \Sm{\base}$ is induced by the identification 
  $\Sigma \Gm \simeq \pic_{\base}$ combined with the inclusion of the heart
  $\Perf(X)^{\heartsuit}\to \Perf(X)$.
  For any $M\in \ZMod{\base}$, we call the associated Fourier transform
    \[\mdef{\Four_{\orcan}^{M}}\colon \pQCoh{\base}[M] 
    \too (\pQCoh{\base})^{\ulSpc{M^{\vee}}}\in \CAlg(\pMQC{\base})\]
    the \tdef{categorical Fourier transform}.
\end{defn}

Because the group ring construction $\pQCoh{\base}[-]$ is difficult
to identify explicitly, the main purpose of the Fourier transform
of \cref{defn: can orientation + fourier} is to help us construct the 
motivic Fourier transform in \cref{ssec: motivic Fourier}. The one
crucial thing we verify however is that, for a torus $\t$ it recovers
the decomposition of the category of representations of $\t$ into
characters. This is the content of the following proposition.

\begin{prop}\label{prop: Fourier computation QCoh}
  For any torus $\t$ over $\base$, the categorical Fourier transform
  \[ \Four_{\orcan}^{\cow{\t}}\colon \pQCoh{\base}[\cow{\t}] 
  \too (\pQCoh{\base})^{\deloop{\t}}\in \CAlg(\pMQC{\base})\]
  is an equivalence of $\Ettop{\base}$-parametrized categories.
\end{prop}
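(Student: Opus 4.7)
The plan is a three-step reduction. First, since both $\pQCoh{\base}[\cow{\t}]$ and $\pQCoh{\base}^{\deloop{\t}}$ live in the parametrized presentable category $\pMQC{\base}$, which by construction satisfies étale descent on $\base$, and since the Fourier transform $\Four_{\orcan}^{\cow{\t}}$ is natural in étale base change, I can verify the equivalence étale-locally on $\base$. As any algebraic torus $\t$ splits after a finite étale cover, this reduces the problem to the case $\t = \Gm^n$, where $\cow{\t}$ is the constant sheaf $\ZZ^n$ and $\deloop{\t} = \deloop{\Gm}^n$.

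Next, I would reduce from rank $n$ to rank $1$ by multiplicativity. The group algebra functor $\pQCoh{\base}[-]$ is a parametrized left adjoint, so it sends direct sums in $\pSpcn{\Ettop{\base}}$ to coproducts in $\CAlg(\pMQC{\base})$, i.e., to relative tensor products over $\pQCoh{\base}$; in particular $\pQCoh{\base}[\ZZ^n] \simeq \pQCoh{\base}[\ZZ]^{\otimes n}$. A Künneth-type identification would yield the analogous decomposition $\pQCoh{\base}^{\deloop{\Gm}^n} \simeq (\pQCoh{\base}^{\deloop{\Gm}})^{\otimes n}$ on the cohomology side; this uses that the rank-one cohomology turns out to be dualizable with the explicit description given below. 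Since the Fourier transform is multiplicative in $M$ by construction of \Cref{Fourier thm}, this reduces the claim to the case $n = 1$.

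Finally, for $\t = \Gm$, I would verify that $\Four_{\orcan}^{\ZZ}\colon \pQCoh{\base}[\ZZ] \to \pQCoh{\base}^{\deloop{\Gm}}$ is an equivalence. Under the adjunction $\pQCoh{\base}[-] \dashv (-)^{\times}$, this map is the adjoint of a morphism $\ZZ \to (\pQCoh{\base}^{\deloop{\Gm}})^{\times}$, which by construction of the canonical orientation $\orcan$ sends the generator $1 \in \ZZ$ to the tautological character, i.e.\ the line bundle $\mathcal{O}(1)$ on $\deloop{\Gm}$. Evaluating on sections at $X \in \Sch_{\base}$, the LHS is the free stable idempotent-complete symmetric monoidal $\Perf(X)$-linear category on a single invertible object, which unfolds explicitly as $\bigoplus_{n \in \ZZ} \Perf(X)$ with convolution tensor product. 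The RHS is $\Perf(\deloop{\Gm_X})$, which by the classical weight decomposition of $\Gm$-representations equals the same expression, with the $n$-th summand generated by $\mathcal{O}(n) = \mathcal{O}(1)^{\otimes n}$. The Fourier transform matches these decompositions summand by summand. The main obstacle I anticipate is making the weight decomposition precise and symmetric-monoidally natural in the parametrized setting — a classical fact whose parametrized formulation rests ultimately on the universal property of $\deloop{\Gm}$ as the classifying stack of line bundles.
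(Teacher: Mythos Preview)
Your proposal is correct and follows essentially the same three-step reduction as the paper: pass \'etale-locally to the split case, reduce by multiplicativity to rank one, and identify the rank-one Fourier transform with the weight decomposition of $\Gm$-representations via the line bundle $\cO(1)$. The paper also makes explicit one point you left implicit, namely that the sectionwise evaluation $\Gamma_A(\pQCoh{\base}[\ZZ]) \simeq \coprod_{\ZZ}\Perf(A)$ holds because the canonical map from an infinite coproduct to an infinite product in $\Catperf$ is a monomorphism, so the sheaf-theoretic colimit over the discrete set $\ZZ$ is already computed sectionwise without further sheafification.
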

\begin{proof}
First, note that by \cref{lem: torus dual computation} we have 
$\cow{\t}^{\vee}\simeq \deloop{\t}$, so the target of the Fourier
transform is indeed given by $\left(\pQCoh{\base}\right)^{\deloop{\t}}$

Since equivalences can be checked \'etale
locally and both sides take products of \'etale connective spectra to 
tensor products, we may reduce to the case
$\t=\Gm$ and $\cow{\t}=\ZZ$. It thus suffices to show that, for any representable
$A\in \Sch_{\base}$, the $\Perf(A)$-linear functor
  \[ \Gamma_A(\Four_{\orcan}^{\ZZ})\colon \Gamma_A(\pQCoh{\base}[\ZZ])
  \too \Gamma_A(\pQCoh{\base})^{\deloop{\t}} = \Perf(\deloop{\GG_{m,A}})\]
  is an equivalence. 
  Since in $\Catperf$ the map from the infinite coproduct to the infinite product
  is fully faithful and hence a monomorphism, the colimit over the discrete space
  $\ZZ$ in the sheaf category $\Gamma_A(\pMQC{\base})$ is computed sectionwise.
  Thus, the sections of $\pQCoh{\base}[\ZZ]$ are given by the coproduct
  \[ \Gamma_A(\pQCoh{\base}[\ZZ]) \simeq \Perf(A)[\ZZ] 
  \simeq \coprod_{\ZZ}\Perf(A) \in \Catperf.\] 
  Moreover, since the composite $\Catperf \simeq \Prlc\to \rm{Pr}^{\rm{L}}$
  preserves colimits, the symmetric monoidal category $\Perf(A)[\ZZ]$ naturally
  identifies with compact objects in the category of $\ZZ$-graded objects
  $\QCoh(A)^{\mathrm{gr}}$ carrying the Day-convolution symmetric monoidal structure. 
  
  Unwinding the definitions, we learn that the Fourier transform
  $\Gamma_A(\Four_{\orcan}^{\ZZ})$ is determined by the map of connective spectra
  \[\ZZ\too \Perf(\deloop{\GG_{m,A}})^{\times}\,\] 
  picking out the canonical line bundle $\cO(1)$. Thus, under the identification
  of the left hand side described above, the functor 
  \[\Gamma_A(\Four_{\orcan}^{\ZZ})\colon \QCoh(A)[\ZZ] \too \QCoh(\deloop{\GG_{m,A}})\] 
  takes a graded complex of sheaves $(E_n)_{n\in \ZZ}$ to the complex 
  $\bigoplus_{n\in \ZZ} E_n$ with the natural $\Gm$-action. This is well known 
  to be an equivalence of categories, for example
  by~\cite[\href{https://arxiv.org/pdf/1907.13562\#equation.4.1}{Theorem 4.1}]{Moulinos_2021}, with inverse
  taking $F\in \Perf(\deloop{\Gm})$ to the graded complex 
  $(\Gamma(B\GG_m,F\otimes \cO(n)))_{n\in\ZZ}$ and so we conclude.
\end{proof}


\subsection{Assembling the motivic Fourier transform}\label{ssec: motivic Fourier}

Fix $\base$ to be an affine, regular base scheme.
We now use the parametrized Fourier transform constructed in \cref{defn: Fourier}
to build a Fourier transform in the category of $\KGL_{\base}$-modules, which 
we call the \tdef{motivic Fourier transform}.

Let $\stack \in \Stk{\base}$ be a stack and consider the lax symmetric monoidal functor
\[ K_{\stack}\colon \MQC(\stack) \too \Shv_{\et}(\Stk{\stack};\Catperf) \too \Shv_{\et}(\Sm{\stack};\Catperf) \xrightarrow{K} \Fun(\Sm{\stack}^{\op},\Sp)\,\]
given by first forgetting to the underlying $\Catperf$-valued sheaf, then restricting to smooth stacks over $\stack$, followed by applying
$K$-theory sectionwise (which generally might not preserve the sheaf condition).
For any $E\in \SH{\stack}$ we write $\ulPsh{\stack}$ for the sheaf of
spectra on $\Sm{\stack}$ given by the assignment taking $Y\in \Sm{\stack}$
to the mapping spectrum $\map(\SS_{\stack}[Y], E)$. 
We begin by observing the following.

\begin{lem}
   For any regular stack $\stack$, there is a natural equivalence 
    \[  K_{\stack}(\pQCoh{\stack}) \xrightarrow{\sim}
    u_{\stack}(\KGL_\stack) \, \]
   of presheaves of spectra on $\Sm{\stack}$.

\end{lem}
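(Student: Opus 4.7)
The strategy is to evaluate both presheaves on a smooth representable $f\colon Y\to\stack$ and identify each with the algebraic $K$-theory of~$Y$, then verify that the identifications assemble naturally in $Y$.

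For the left hand side, unwinding the definition of the functor $K_{\stack}$, we have
\[K_{\stack}(\pQCoh{\stack})(Y) \simeq K(\pQCoh{\stack}(Y)) \simeq K(\Perf(Y)) = K(Y),\]
since $\pQCoh{\stack}$ is by definition the \'etale sheaf whose value on a smooth representable stack is $\Perf$. For the right hand side, I would use the smooth projection formula of \cref{thm: lower sharp proj formula}, which gives $\SS_{\stack}[Y]\simeq f_{\sharp}\SS_Y$, together with the key property $f^{\ast}\KGL_{\stack}\simeq \KGL_Y$ from \cref{defn: KGL}, to compute
\[ u_{\stack}(\KGL_{\stack})(Y) = \map_{\SH{\stack}}(\SS_{\stack}[Y], \KGL_{\stack}) \simeq \map_{\SH{Y}}(\SS_Y, f^{\ast}\KGL_{\stack}) \simeq \map_{\SH{Y}}(\SS_Y, \KGL_Y) = \KH_Y(Y). \]

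Since $\stack$ is regular and $f$ is smooth, the domain $Y$ is also regular (and representable). Hence, by the discussion following the introduction of $\KH$ (invoking~\cite[\href{https://arxiv.org/pdf/2106.15001\#equation.10.3}{Cons.~10.3}]{kr24}), one has $\KH_Y(Y)\simeq K(Y)$. Composing all three steps gives a pointwise equivalence $K_{\stack}(\pQCoh{\stack})(Y)\xrightarrow{\sim} u_{\stack}(\KGL_{\stack})(Y)$.

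The final task is to promote this to an equivalence of presheaves of spectra on $\Sm{\stack}$. This amounts to checking that for every morphism $g\colon Y'\to Y$ in $\Sm{\stack}$, the pullback map $g^{\ast}\colon K(\Perf(Y))\to K(\Perf(Y'))$ produced by the LHS matches the map coming from the RHS under the composite of the adjunction equivalences. This is essentially formal: the identification $f_{\sharp}\SS_Y\simeq \SS_{\stack}[Y]$ is natural in $Y$, and the identification $f^{\ast}\KGL_{\stack}\simeq \KGL_Y$ is natural by construction of $\KGL$ as representing $\KH$ (\cref{defn: KGL}). The main (mild) obstacle is just keeping track of this naturality coherently; since both constructions are obtained by straightening a single functor on $\Sm{\stack}$ (the assignment $Y\mapsto K(Y)\simeq \KH_Y(Y)$), the naturality compatibility ultimately reduces to the fact that Thomason--Trobaugh $K$-theory of regular stacks agrees with $\AA^1$-localized $K$-theory $\KH$, which is precisely the content of~\cite[\href{https://arxiv.org/pdf/2106.15001\#equation.10.3}{Cons.~10.3}]{kr24}.
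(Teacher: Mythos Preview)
Your proposal is correct and follows essentially the same approach as the paper. The paper identifies $u_{\stack}(\KGL_{\stack})(Y)$ directly with $L_{\AA^1}K(\Perf(Y))$ by citing the construction of $\KGL$, whereas you re-derive this via the $(f_{\sharp},f^{\ast})$-adjunction and the base change identity $f^{\ast}\KGL_{\stack}\simeq\KGL_Y$; both arguments then finish identically by observing that $Y$ inherits regularity from $\stack$, so that $\KH(Y)\simeq K(Y)$.
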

\begin{proof} 
Let $p\colon Y\to \stack\in \Sm{\stack}$ be smooth representable. By construction,
 $\KGL_{\stack}$ has underlying (pre)-sheaf given by
\[ u_{\base}(\KGL_{\stack})(Y) = L_{\AA^1}K(\Perf(Y))\,.\]
Since $\stack$ is regular and $p$ is smooth, $Y$ is regular as well and hence we get that
the natural map
\[ L_{\AA^1}K(\QCoh(\stack_Y))\xrightarrow{\sim} K(\Perf(\stack_Y))= K_{\stack}(\pQCoh{\stack})(Y)\]
is an equivalence, as claimed.
\end{proof}
In particular, we learn that we have $K_{\stack}(\pQCoh{\stack}) \simeq \KH_\stack$, 
so that $K_\stack$ naturally lifts to a functor to modules over $\KH_\stack$
in presheaves of spectra. We denote by
\[ 
\mdef{\KFun{\stack}} \colon 
\MQC(\stack) \longrightarrow \Mod{\SH{\stack}}{\KGL_\stack}
\]
the functor given by the localization $L_{\beta}L_{\AA^1}L_{\Nis}K_\stack$, that is we apply $K$-theory sectionwise and then apply Nisnevich localization, $\AA^1$-localization and Bott-periodization to the resulting presheaf.
All the functors are compatible with representable $(-)^{\ast}$, so that $\KFun{\stack}$
is as well. This allows us to define assembly and coassembly maps for representable
morphisms. In fact we can use \cref{prop:Bott-periodization-is-localization} to get a
coassembly equivalence for all maps of stacks, not just representable ones.

\begin{lem}\label{lem: computation target fourier}
    For any stack $f\colon \fX \to \base$, 
    we have a natural equivalence of $\KGL_{\base}$-algebras
    \[ \KFun{\base}(f_{\ast}\pQCoh{\stack}) \xrightarrow{\sim} f_{\ast} \KGL_{\stack}\,.\]
\end{lem}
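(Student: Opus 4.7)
The plan is to compare both sides sectionwise as presheaves of $\KGL_\base$-module spectra on $\Sm{\base}$, using the explicit formulas for the pushforward $f_\ast$ on each side.

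First, I would unwind the left-hand side at a smooth representable $Y \in \Sm{\base}$. By the definition of $f_\ast$ on $\pMQC{\base}$ (as precomposition with the pullback $Y \mapsto \stack \times_\base Y$), we have $(f_\ast \pQCoh{\stack})(Y) \simeq \Perf(\stack \times_\base Y)$, so $K_\base(f_\ast \pQCoh{\stack})$ is the presheaf $Y \mapsto K(\stack \times_\base Y)$ on $\Sm{\base}$. Nisnevich descent of $K$-theory on stacks (Thomason--Trobaugh, extended to stacks by Khan--Ravi as recalled above \cref{defn: KGL}) implies that $L_\Nis$ does not alter values on smooth representables, while $L_{\AA^1}$ is defined so that $L_{\AA^1} K = \KH$; therefore the underlying presheaf of $L_{\AA^1} L_\Nis K_\base(f_\ast \pQCoh{\stack})$ is $Y \mapsto \KH(\stack \times_\base Y)$.

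Next, I would compute the right-hand side on the same $Y$. Since $\stack \times_\base Y \to \stack$ is smooth and representable as a pullback of such, $f^\ast \SS_\base[Y] \simeq \SS_\stack[\stack \times_\base Y]$, and the $f^\ast \dashv f_\ast$ adjunction gives
\[ u_\base(f_\ast \KGL_\stack)(Y) \simeq \map(\SS_\stack[\stack \times_\base Y], \KGL_\stack) \simeq u_\stack(\KGL_\stack)(\stack \times_\base Y) = \KH(\stack \times_\base Y) \,, \]
which is exactly the formula of \cref{war: completed KGL} and matches the left-hand computation.

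Finally, I would assemble these sectionwise identifications into a natural equivalence of $\KGL_\base$-algebras. The comparison map itself comes from the universal lax-monoidal natural transformation $K_\stack(\pQCoh{\stack}) \to u_\stack(\KGL_\stack)$ featured in the preceding lemma: pushing it forward along $f_\ast$ and applying $L_\beta L_{\AA^1} L_\Nis$ yields the required map, and the sectionwise identification above shows it is an equivalence. The main obstacle will be bookkeeping for the $\EE_\infty$-algebra structure: this requires using that both pushforwards $f_\ast$ (on $\pMQC{\base}$ and on $\SH{\base}$) are lax symmetric monoidal as right adjoints of symmetric monoidal functors, that $K$-theory is lax symmetric monoidal, and that Bott-periodization is symmetric monoidal by \cref{prop:Bott-periodization-is-localization}. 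Once this compatibility is tracked, the natural equivalence of $\KGL_\base$-algebras drops out.
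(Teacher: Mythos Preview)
Your proposal is correct and follows essentially the same approach as the paper: unwind both sides sectionwise on $\Sm{\base}$ to the presheaf $Y \mapsto \KH(\stack \times_\base Y)$, invoking \cref{war: completed KGL} for the right-hand side. The one point the paper makes slightly more explicit is the final step: having identified $L_{\AA^1}L_{\Nis}K_\base(f_\ast\pQCoh{\stack})$ with the underlying $\KH_\base$-module of $f_\ast\KGL_\stack$, the conclusion is that this object is \emph{already} a $\KGL_\base$-module in $\SH{\base}$, so $L_\beta$ acts as the identity on it by the fully faithfulness in \cref{prop:Bott-periodization-is-localization} --- you cite this proposition only for symmetric monoidality, but its real role here is the fully faithfulness.
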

\begin{proof}
Unwinding the definition of $\KFun{\base}$, we see that the
   left hand side is the Bott-periodization of the motivic $S^1$-spectrum which
   takes a smooth representable $p\colon Y\to \base$ to
   \[ L_{\AA^1}L_{\Nis}K(\Perf(Y\times_{\base} \stack)) = \KH_{\base}(Y\times_{\base} \stack)\,,\]
   which by \cref{war: completed KGL} is naturally equivalent to the underlying
   sheaf of spectra represented by $f_{\ast} \KGL_{\stack}$. In particular,
   it is already a $\KGL_\base$-module and so the claim follows from the fully faithfulness
   of \cref{prop:Bott-periodization-is-localization}.
\end{proof}

The assembly is more mysterious, so we spell it out and upgrade it to a map
of group algebras.

\begin{construction}\label{constr: assembly}
    Let $f\colon X\to \base$ be a smooth $\base$-scheme.
    Applying the functor $\KFun{X}$ to the unit of the $f_{\sharp} \dashv f^{\ast}$ 
    adjunction
    \[ \epsilon \colon \pQCoh{X} \too f^\ast f_{\sharp} \pQCoh{X} \in \MQC(X).\] 
     and using that $\KFun{\base}$ commutes with $f^{\ast}$ induces a natural map of $\KGL_{X}$-modules
    \[ \KFun{X}(\epsilon) \colon \KFun{X}(\pQCoh{X}) 
    \too f^\ast \KFun{\base}(f_\sharp \pQCoh{X}) \in \Mod{\SH{X}}{\KGL_{X}}\,. \] 
    Using that $\KFun{X}(\pQCoh{X})\simeq \KGL_X$ by 
    \cref{lem: computation target fourier} and passing to the adjoint yields a map
    \[ \ass_{X} \colon f_\sharp \KGL_X \too \KFun{\base}(f_\sharp \pQCoh{X}) \in \Mod{\SH{\base}}{\KGL_{\base}}.\]
    By left Kan extending in the source, we obtain for every $A\in \Nistop{\base}$
    a natural map 
    \[ \mdef{\ass_{A}} \colon \KGL_{\base}[A] \too \KFun{\base}(\pQCoh{\base}[v^{\ast}A]) \]
    which we call the \tdef{assembly map} of $A$. Here, $v^{\ast}$ denotes
    \'etale sheafification. By monoidality of the adjunctions, $\ass_{(-)}$ enhances 
    to a lax symmetric monoidal transformation
    of lax symmetric monoidal functors and thus refines to a natural transformation
    \[ \ass_{(-)} \colon \KGL_{\base}[-]\too \KFun{\base}(\pQCoh{\base}[v^{\ast}(-)]\]
    of functor $\Nistop{\base} \otimes \Spcn \to \CAlg_{\KGL_{\base}}(\SH{\base})$.
\end{construction}

Since we have little control over the $(-)_{\sharp}$-functors in $\MQC(\base)$, it
is difficult to say things about the assembly in general. However, we know
what happens for finite \'etale maps, which is just enough for our purposes. 

\begin{lem}\label{lem: fet assembly}
    For any finite \'etale scheme $f\colon X\to \base$ the assembly map
    \[ \ass_{X}\colon f_\sharp \KGL_X \longrightarrow \KFun{\base}(f_{\sharp}\pQCoh{X}) \]
    is an equivalence.
\end{lem}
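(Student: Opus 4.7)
The plan is to reduce to the case where $f$ is a fold map by étale descent on $\base$, and then verify the equivalence directly by additivity. The key geometric input is that every finite étale morphism trivializes after an étale cover of the base.

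First, I would verify that the formation of $\ass_X$ is compatible with étale base change. Given an étale morphism $u\colon \base' \to \base$ with pullback $f'\colon X' \to \base'$ and $u'\colon X' \to X$, the smooth base change formula of \cref{thm: lower sharp proj formula} provides $u^{\ast} f_{\sharp} \KGL_X \simeq f'_{\sharp} \KGL_{X'}$ on the source of the assembly. The analogous statement on the target follows from the Beck--Chevalley isomorphism for $f_{\sharp}$ in the parametrized presentable category $\pMQC{-}$ (\cref{presentable defn}), together with the fact that $\KFun{}$ is compatible with $u^{\ast}$ for étale $u$, as all ingredients entering its definition --- sectionwise algebraic $K$-theory, Nisnevich and $\AA^1$-localization, and Bott periodization --- are manifestly natural under pullback along étale maps. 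Since equivalences in $\SH{\base}$ may be detected étale-locally and every finite étale cover splits after such a pullback, we may assume $X \simeq \bigsqcup_{i=1}^{n} \base$ with $f$ the fold map.

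In this split case, $f_{\sharp}\KGL_X \simeq \bigoplus_{i=1}^{n} \KGL_{\base}$ in $\SH{\base}$. Similarly, on the parametrized side, $f_{\sharp} \pQCoh{X} \simeq \bigoplus_{i=1}^{n} \pQCoh{\base}$ in $\MQC(\base)$. The functor $\KFun{\base}$ preserves finite coproducts: sectionwise algebraic $K$-theory is additive, and Nisnevich sheafification, $\AA^1$-localization and Bott periodization are all accessible left adjoints, hence preserve all colimits. Consequently,
\[ \KFun{\base}(f_{\sharp} \pQCoh{X}) \simeq \bigoplus_{i=1}^{n} \KFun{\base}(\pQCoh{\base}) \simeq \bigoplus_{i=1}^{n} \KGL_{\base}. \]
Tracing through \cref{constr: assembly}, the unit $\epsilon$ restricted to the $i$-th factor of $\MQC(X) \simeq \prod_{i} \MQC(\base)$ is simply the inclusion of the $i$-th summand $\pQCoh{\base} \hookrightarrow \bigoplus_{j} \pQCoh{\base}$. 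Applying $\KFun{X}$ and passing to the adjoint across $f_{\sharp} \dashv f^{\ast}$ identifies $\ass_X$ with the identity on $\bigoplus_{i=1}^{n} \KGL_{\base}$.

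The main technical obstacle is establishing the compatibility of $\ass_{(-)}$ with étale base change, since $\KFun{\base}$ is a composition of several localizations; this is ultimately a bookkeeping exercise using the naturality of each ingredient. Once this is granted, the split case reduces to a direct additivity statement, using no property of $f$ beyond the existence of a trivializing étale cover.
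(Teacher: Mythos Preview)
Your reduction step contains a genuine gap: you assert that equivalences in $\SH{\base}$ may be detected \'etale-locally, but this is false. The functor $\pSH^{\ast}$ satisfies only Nisnevich descent, not \'etale descent, and pullback along an \'etale cover is in general not conservative on $\SH{\base}$. For instance, over $\base=\Spec{\RR}$ there are nonzero motivic spectra whose pullback to $\Spec{\CC}$ vanishes; phenomena of this sort are governed by the Witt ring and the element $\rho=[-1]$, and there is no reason for them to disappear after tensoring with $\KGL$. Since a genuinely non-split finite \'etale map $f$ will not trivialize Nisnevich-locally, you cannot reduce to the fold case by any descent argument available in $\SH{\base}$ or in $\Mod{\SH{\base}}{\KGL_{\base}}$.

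The paper circumvents this entirely. Rather than descending, it compares $f_{\sharp}$ with $f_{\ast}$ via the norm map of \cref{lem: fet semi-add}, which is an equivalence for finite \'etale $f$ in any sectionwise $0$-semiadditive, finite \'etale bicomplete $\tB$-category. One then obtains a commuting square whose other three edges are the coassembly map (an equivalence by \cref{lem: computation target fourier}), the norm on the $\KGL$-side (an equivalence by Atiyah duality, \cref{thm: Atiyah Duality}), and the norm on the $\pQCoh{}$-side (an equivalence by \cref{lem: fet semi-add}). The point is that the norm construction is already an equivalence before passing to any cover, so no descent or conservativity statement is needed. Your additivity check in the split case is fine, but it is precisely the non-split case that carries all the content, and that is where your argument breaks.
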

\begin{proof}
  By the naturality of the norm construction, explained
  in~\cite[\href{https://arxiv.org/pdf/2403.07676\#proposition.3.7}{Prop.~3.7}]{ppp}, the diagram
\[ \begin{tikzcd}
    f_\sharp \KGL_X \ar[r] \ar[d] & \KFun{\base}(f_\sharp \pQCoh{X}) \ar[d] \\
    f_\ast \KGL_X  & \ar[l] \KFun{\base}(f_\ast \pQCoh{X})
\end{tikzcd} \]
commutes, where the vertical maps are the norm maps and the vertical once are 
the (co-) assembly maps.
The lower map is an equivalence by \Cref{lem: computation target fourier}, the right hand
map is an equivalence by Atiyah duality \cref{thm: Atiyah Duality} and the right hand map
is an equivalence by \Cref{lem: fet semi-add}, so the claim follows.
\end{proof}

\begin{construction}\label{defn: mot fourier}
    Let $M\in \ZMod{\base}$ such that $\ulSpc{M^{\vee}}$ is representable by a stack. 
    The \tdef{Motivic Fourier transform} associated to $M$ is the map of
    $\KGL_{\base}$-algebras
    \[ \mdef{\Four_{\rm{mot}}^{M}}\colon \KGL_S[M] \too \KGL_S^{\ulSpc{M^{\vee}}}
    \in \CAlg_{\KGL_{\base}}(\SH{\base})\,, \]
    given by the composite map
    \[ \KGL_\base[M] \xrightarrow{\ass_{M}} \KFun{\base}(\pQCoh{\base}[M])
    \xrightarrow{\KFun{\base}(\Four_{\orcan}^{M})} \KFun{\base}((\pQCoh{\base})^{\ulSpc{M^{\vee}}})
    \simeq \KGL_\base^{\ulSpc{M^{\vee}}} \] 
    of the assembly of $M \to S$, the image of the categorical Fourier transform of
    \cref{defn: can orientation + fourier} under the functor $\KFun{\base}$ 
    and the equivalence from \Cref{lem: computation target fourier}.
\end{construction}
We are now ready to prove the first major ingredient for our main theorem, namely
that the representation theoretic equivalence
of~\cref{prop: Fourier computation QCoh} descends to algebraic $K$-theory.
We will first start with the following lemma, giving a more explicit description of the character lattice. 

\begin{construction}\label{lem:lattice-is-finite-etale}
Given a torus $\t/\base$, let $\base^{\prime} \to \base$ be a finite \'etale extension 
that splits $\t$ with Galois-group $\GalG$ and write $\Lambda= \cow{\t}_{\base^\prime}$ 
for the underlying set with $\GalG$-action of $\cow{\t}$. We have a natural 
$\GalG$-equivariant decomposition of $\Lambda$ into the orbits i.e.~an equivalence 
of $\GalG$-sets
\[ \Lambda \simeq \coprod_{[\alpha] \in \Lambda_{\GalG}} \alpha\GalG\,. \]
Since $\cow{\t}$ is locally constant, it is left Kan extended from the small \'etale
site and so by \cref{lem:C-fully-faithful} we obtain an induced decomposition
 of \'etale sheaf of sets
    \[ \cow{\t} \simeq \coprod_{[\alpha] \in \Lambda_{\GalG}} X_{\alpha} 
    \in \rm{Set}^{\et}_{\base},\]
    where $X_{\alpha}\to \base$ is the finite \'etale cover corresponding to the finite,
    transitive $\GalG$-set $\alpha \GalG$.
\end{construction}

With this presentation in hand, we are ready to use the machine of the previous section to 
show that, for a torus $\t$, the motivic Fourier transform witnesses the
character decomposition of the category $\Perf(\deloop{\t})$ on the level of motivic
spectra.

\begin{prop}\label{thm: Fourier computation KGL}
 For any torus $\t$ over $\base$ the motivic Fourier transform
  \[ \Four_{\rm{mot}}^{\cow{\t}} \colon \KGL_{S}[\cow{\t}]
  \too \KGL_{\base}^{\deloop{\t}} \in \CAlg_{\KGL}(\SH{\base})\]
  is an equivalence of commutative $\KGL_{\base}$-algebras.
\end{prop}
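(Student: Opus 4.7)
The plan is to decompose $\Four_{\rm{mot}}^{\cow{\t}}$ into the three constituent pieces of \cref{defn: mot fourier}:
\[ \KGL_{\base}[\cow{\t}] \xrightarrow{\ass_{\cow{\t}}} \KFun{\base}(\pQCoh{\base}[\cow{\t}]) \xrightarrow{\KFun{\base}(\Four_{\orcan}^{\cow{\t}})} \KFun{\base}\bigl((\pQCoh{\base})^{\deloop{\t}}\bigr) \xrightarrow{\sim} \KGL_{\base}^{\deloop{\t}}, \]
where we have used $\cow{\t}^{\vee}\simeq \deloop{\t}$ from \cref{lem: torus dual computation}. The middle arrow is the image under $\KFun{\base}$ of the categorical Fourier transform, which is an equivalence in $\pMQC{\base}$ by \cref{prop: Fourier computation QCoh}. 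The last arrow is the coassembly equivalence supplied by \cref{lem: computation target fourier}. Hence the proof reduces to showing that the assembly $\ass_{\cow{\t}}$ is an equivalence.

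For this we invoke the orbit decomposition of \cref{lem:lattice-is-finite-etale}, which presents $\cow{\t}\simeq \coprod_\alpha X_\alpha$ as an \'etale sheaf of sets, with each $p_\alpha\colon X_\alpha \to \base$ finite \'etale (corresponding to a single Galois orbit of the character lattice). The next step is to check that both endpoints of the assembly respect this coproduct decomposition. On the source, the functor $\KGL_{\base}[-]$ is left Kan extended from smooth representables along $\Sm{\base}\hookrightarrow \Nistop{\base}$ and hence preserves all colimits, giving $\KGL_{\base}[\cow{\t}]\simeq \bigoplus_\alpha p_{\alpha \sharp}\KGL_{X_\alpha}$. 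On the target, the group-algebra $\pQCoh{\base}[-]$ likewise preserves coproducts of discrete sheaves (which are computed sectionwise in $\pMQC{\base}$, by the same argument used in the proof of \cref{prop: Fourier computation QCoh}). Moreover, the functor $\KFun{\base}=L_\beta L_{\AA^1}L_{\Nis}K$ preserves arbitrary direct sums: algebraic $K$-theory on $\Catperf$ preserves filtered colimits and finite products, hence all direct sums of small stable idempotent-complete categories, and the three localizations are left adjoints.

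By naturality of the assembly construction, it follows that $\ass_{\cow{\t}}$ splits as the direct sum $\bigoplus_\alpha \ass_{X_\alpha}$ of assembly maps attached to the finite \'etale covers $p_\alpha\colon X_\alpha\to \base$. Each summand $\ass_{X_\alpha}$ is an equivalence by \cref{lem: fet assembly}, completing the argument. The main obstacle is the bookkeeping around commuting $\KFun{\base}$ with the (countable) direct sum over Galois orbits, since $K$-theory is only additive; once one verifies that $\KFun{\base}\circ \pQCoh{\base}[-]$ carries the orbit decomposition of $\cow{\t}$ to the expected direct sum in $\Mod{\SH{\base}}{\KGL_{\base}}$, the reduction to the finite \'etale case of \cref{lem: fet assembly} is essentially formal.
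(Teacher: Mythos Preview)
Your proof is correct and follows essentially the same approach as the paper: decompose $\Four_{\rm{mot}}^{\cow{\t}}$ via \cref{defn: mot fourier}, invoke \cref{prop: Fourier computation QCoh} for the categorical piece, and reduce the assembly map to the finite \'etale case of \cref{lem: fet assembly} using the orbit decomposition of \cref{lem:lattice-is-finite-etale} together with the fact that $\KFun{\base}$ preserves infinite direct sums. The paper is slightly more terse (it simply cites additivity of $K$-theory for the coproduct preservation and does not separately discuss $\pQCoh{\base}[-]$), but the logical structure is identical.
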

\begin{proof}
  By construction, the map $\Four_{\rm{mot}}^{\cow{\t}}$ is given by the composite
  \[ \KGL_{\base}[\cow{\t}] \xrightarrow{\ass_{\cow{\t}}}
  \KFun{\base}(\pQCoh{\base}[\cow{\t}])
  \xrightarrow{\KFun{\base}(\Four_{\orcan}^{\cow{\t}})}
  \KFun{\base}((\pQCoh{\base})^{\deloop{\t}})\,.\]
    As the categorical Fourier transform $\Four_{\orcan}^{\cow{\t}}$ is an equivalence by
    \Cref{prop: Fourier computation QCoh}, it is enough to show that the assembly map for
    $\cow{\t}$ is an equivalence.
    Recall that the functor $\KFun{\base}$ is given by the composite 
    $L_{\beta}L_{\AA^1}L_{\Nis}K_\base$ where $K_{\base}$ is the functor that applies
    $K$-theory section-wise. As $K$-theory is an additive invariant
  by \cite[Thm.~1.3]{bgt13} it commutes with infinite coproducts and so do the localization
  functors as they are left adjoints. Thus, $\KFun{\base}$ commutes with infinite 
  coproducts as well. By \Cref{lem:lattice-is-finite-etale}, we can write
  $\cow{\t}$ as an infinite coproduct of finite étale extensions of $S$ and so the
  claim follows by naturality of the assembly combined with \Cref{lem: fet assembly}.
\end{proof}
\section{Motivic Eilenberg--Moore formulas}\label{sec: Motivic EM} 
Let $\baf$ be a field and write $\base= \Spec{\baf}$ throughout this section. 

We saw in \cref{thm: Fourier computation KGL} that for any torus $\t/\base$, the decomposition
of a representation of $\t$ into characters as in \cref{prop: Fourier computation QCoh}, 
induces an equivalence of $\KGL_{\base}$-algebras
\[ \Four_{\rm{mot}}^{\cow{\t}}\colon \KGL_{\base}[\cow{\t}]
\xrightarrow{\sim} \KGL_{\base}^{\deloop{\t}}.\]
By the computations of \cref{lem: torus dual computation}, we know that the motivic
Fourier transform also gives a natural map 
\[ \Four_{\rm{mot}}^{\Sigma \cow{\t}} \colon \KGL_{\base}[\Sigma \cow{\t}] 
\too \KGL_{\base}^{\t}.\]
The main result of this section, \Cref{thm: mainthm}, is that this map is in fact an equivalence. The proof proceeds along several steps.

In \cref{ssec: Gluing the Fourier}, we discuss how to obtain the second equivalence
as a base change of the first one. The idea is to replace the presentation
of $\t$ via the bar construction, i.e.~as the fiber of the canonical map 
$\base \to \deloop{\t}$, by a more delicate resolution. Namely, we write
$\t$ as a fiber of a map of deloopings of tori $\deloop{\kt}\to \deloop{\wt}$ where $\wt$
is Weil restricted from a split torus. 
This will imply our theorem
once we prove an Eilenberg--Moore formula for the stacks involved, see \cref{algEM},
which occupies us for the remainder of this chapter.

The main important geometric ingredient is an observation which already appeared in the work of Merkurjev and Panin \cite{mp97}, namely that the Weil restricted torus $W$ admits a $W$-equivariant cell decomposition. We will formalize this, by showing that the $W$-equivariant motive of $W$ is \emph{Artin--Tate} and hence \emph{Artin} over $\KGL_{\deloop{\wt}}$.
We will recall those two notions in \cref{ssec: EM}, before turning to the concrete case of a torus in \cref{ssec: EM for tori}.

\subsection{Gluing the motivic Fourier transform}\label{ssec: Gluing the Fourier}

As we will be juggling different (co)-fiber sequences of sheaves of connective spectra,
let us recall some yoga about the standard $t$-structure on sheaves of spectra. 
We refer the reader 
to~\cite[\href{https://www.math.ias.edu/~lurie/papers/SAG-rootfile.pdf\#section.1.3.1}{Sec.~1.3.1}]{SAG} 
as well as~\cite{haine2025} for details.

For $\sigma\in \{\et,\Nis\}$, we from now on denote by
\[\Sp_{\sigma}(\base)\coloneq \Shv_{\sigma}(\Sm{\base};\Sp),
\quad \Spcn_{\sigma}(\base)\coloneq \Shv_{\sigma}(\Sm{\base};\Spcn)  \]
the category of $\sigma$-sheaves of (connective) spectra on $\Sm{\base}$.
By~\cite[\href{https://www.math.ias.edu/~lurie/papers/SAG-rootfile.pdf\#theorem.1.3.2.7}{Prop.~1.3.2.7}]{SAG} the natural $t$-structure on $\Sp$ induces a $t$-structure
on $\Sp_{\sigma}(\base)$ called the \tdef{standard $t$-structure}. Explicitly,
the fully faithful inclusion $i\colon \Spcn \subseteq \Sp$ induces an adjunction
\[ i^{\sigma}\colon \Spcn_{\sigma}(\base) \adj \Sp_{\sigma}(\base)\noloc\tau_{\geq 0}^{\sigma},\]
where $i^{\sigma}$ is again fully faithful and exhibits $\Spcn_{\sigma}(\base)$
as the full subcategory 
\[\Spcn_\sigma(\base) \simeq (\Sp_{\sigma}(\base))_{\geq 0} \subseteq \Sp_{\sigma}\]
of connective objects with respect to the standard $t$-structure.
Here, $\tau_{\geq 0}^{\sigma}$ is computed pointwise by taking connective covers and 
$i^{\sigma}$ is computed by applying $i$ pointwise and then sheafifying.

Note that the sheafification built into the functor $i^{\sigma}$ means that, if
$E\in \Sp_{\sigma}^{\cn}(\base)$ is a sheaf of connective spectra, the sheaf 
$i^{\sigma}E$ can sectionwise have negative homotopy groups. 
Indeed, these are the (positive) sheaf cohomology groups, so for any $U\in \Sm{\base}$ and
$E\in \Spcn_{\sigma}(\base)$ we write
\[ \mdef{H^n_{\sigma}(U;E)} \coloneq \pi_{-n}\Gamma(U;i^{\sigma}E).\]
This subtlety disappears if we sheafify again. For each $n\in \ZZ$, we have a functor
\[ \mdef{\pi^{\sigma}_n} \colon \Sp_{\sigma}(\base)\too \Sp^{\heart}_{\sigma}(\base)
\simeq \Shv_{\sigma}(\Sm{\base};\Ab),\]
which, for $E\in \Sp_{\sigma}(\base)$, is computed as the sheafification of the presheaf
$U\mapsto \pi_n\Gamma(U,E)$. Moreover, $E\in \Sp_\sigma(\base)$ is contained in the 
full subcategory $\Sp_\sigma(\base)_{\geq 0}$, i.e.~is given by the sheafification
of a sheaf of connective spectra, if and only if $\pi^{\sigma}_n E\simeq 0$ for 
all $n<0$. 

Before we begin properly, we make a remark about Weil restriction of commutative group
schemes.

\begin{rem}
Let $p\colon \base^\prime \to \base$ be a finite \'etale map. The pullback functor 
\[ p^{\ast}\colon \etSpcn(\base) \too \etSpcn(\base')  \]
admits both adjoints $p_{\sharp} \dashv p^{\ast} \dashv p_{\ast}$ which are naturally
equivalent by \cref{lem: fet semi-add}. In particular, 
on representable objects, namely commutative group schemes, we have a left adjoint functor
$p_{\sharp}=\Weilres{\base'}{\base}{-}$ which is underlying given by \emph{Weil restriction}
of schemes.
\end{rem}

\begin{construction}\label{cons: Weil resolution}
Let $\base$ be a scheme and $\t$ be a torus over $\base$. Given a finite \'etale map 
$p\colon \base^\prime \to \base$ which splits $\t$, let
$p_{\ast}p^{\ast}\t =\wt$ denote the Weil restriction of $\t_{\base^{\prime}}$ along $p$ and denote by $\kt \to \wt$ the kernel of the counit map $\wt \to \t$. 
Then $\kt$ is again a torus, and we obtain a short exact sequence
\begin{equation} \label{eq:exact_seq_tori}
    0 \too \kt \too \wt \too \t \too 0
\end{equation}
of algebraic tori over $\baf$, which we call the \tdef{Weil resolution} of $\t$ associated
to the cover $p\colon \base^{\prime}\to \base$. We sometimes leave the choice of a splitting
cover implicit and simply refer to a Weil resolution of $\t$. 
\end{construction}

Given a Weil-resolution as in~\eqref{eq:exact_seq_tori} upon taking character
lattices, see \cref{defn: char lattice}, and restricting to the smooth locus, we get a sequence of \'etale sheaves of abelian groups on $\Sch_{\base}$ of
the form
\begin{equation}\label{eq: ses}
 0 \to \cow{\t} \to \cow{\wt} \to \cow{\kt} \to 0\,,
\end{equation}
which is again exact, as can be seen by passing to the splitting extension 
$\base^{\prime}$.
Since \'etale covers of smooth schemes are smooth, we may argue as in the proof
of \cref{lem:C-fully-faithful} to see that the functor 
\[ \Shv_{\et}(\Sch_{\base};\Spcn)\too \Shv_{\et}(\Sm{\base};\Spcn) = \etSpcn(\base)  \]
induced by restricting along the inclusion $\Sm{\base}\subseteq \Sch_{\base}$ preserves
colimits. Combining this with the fact that the inclusion of the heart 
takes short exact sequences to cofiber sequences, we learn that we can rotate 
the sequence~\eqref{eq: ses} once to obtain a cofiber sequence in
$\etSpcn(\base)$ of the form
\begin{equation}\label{eq:exact_seq_char}
\cow{\wt} \too \cow{\kt} \too \Sigma\cow{\t}\,.
\end{equation}

Our first goal will be showing that this remains a cofiber sequence
after forgetting to Nisnevich sheaves of connective spectra. 

The inclusion $v\colon \Shv_{\et}(\Sm{\base};\Spc)\to \Shv_{\Nis}(\Sm{\base};\Spc)$ 
induces a commutative diagram of right adjoints
\[\begin{tikzcd}
	{\Sp_{\Nis}(\base)} & {\Sp_{\et}(\base)} \\
	{\Spcn_{\Nis}(\base)} & {\Spcn_{\et}(\base)}\,.
	\arrow["{\tau_{\geq 0}^{\Nis}}"', from=1-1, to=2-1]
	\arrow["{V_{\ast}}"', from=1-2, to=1-1]
	\arrow["{\tau_{\geq 0}^{\et}}", from=1-2, to=2-2]
	\arrow["{v_{\ast}}", from=2-2, to=2-1]
\end{tikzcd}\]
which, upon passing to left adjoints gives a commutative diagram 
\[\begin{tikzcd}
	{\Sp_{\Nis}(\base)} & {\Sp_{\et}(\base)} \\
	{\Spcn_{\Nis}(\base)} & {\Spcn_{\et}(\base)}\,,
	\arrow["{V^{\ast}}", from=1-1, to=1-2]
	\arrow["{i^{\Nis}}", from=2-1, to=1-1]
	\arrow["{v^{\ast}}"', from=2-1, to=2-2]
	\arrow["{i^{\et}}"', from=2-2, to=1-2]
\end{tikzcd}\]
Note that, while the functor $V_{\ast}$ preserves fiber and cofiber sequences, in general
it does not preserve the full subcategory of connective objects.

\begin{lem} \label{vanishing-pi-1}
Let $\wt= q_{\ast} \wt^{\prime}$ be the
Weil restriction of a split torus $\wt^{\prime}$ along a finite \'etale map 
$q\colon \base^\prime \to \base$. 
Then, we have an equivalence of sheaves
\[ \pi_{-1}^{\Nis} V_{\ast} \cow{\wt} \simeq 0 \in \Sp_{\Nis}^{\heart}(\base) \,. \] 
\end{lem}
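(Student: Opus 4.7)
The plan is to unwind both sides and reduce the statement to a classical étale-cohomology vanishing. By the description of the standard $t$-structure recalled above, $\pi^{\Nis}_{-1} V_\ast \cow{\wt}$ is the Nisnevich sheafification of the presheaf
\[ U \longmapsto H^1_{\et}(U;\cow{\wt}) \]
on $\Sm{\base}$. It therefore suffices to show that this presheaf vanishes identically.

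The first substantive step I would take is to identify $\cow{\wt}$ concretely. By hypothesis $\wt = q_\ast \wt^\prime$ for a split torus $\wt^\prime$ of some rank $n$ over $\base^\prime$. For any smooth $U\to \base$, set $U^\prime \coloneq U \times_\base \base^\prime$. Using base change of Weil restriction together with the usual $q^\ast \dashv q_\ast$ adjunction, one obtains a natural isomorphism
\[ \cow{\wt}(U) = \hom_U(\wt_U, \Gm) \;\simeq\; \hom_{U^\prime}(\wt^\prime_{U^\prime}, \Gm) = \cow{\wt^\prime}(U^\prime), \]
which globalizes to an equivalence $\cow{\wt} \simeq q_\ast \cow{\wt^\prime} \simeq q_\ast \ZZ^n$ of étale abelian sheaves on the big site. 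Since $q$ is finite étale, the functor $q_\ast$ on big-site étale abelian sheaves is computed simply by precomposition with the base-change functor $U\mapsto U \times_\base \base^\prime$, hence is exact. Consequently, the Leray spectral sequence for $q$ collapses and gives
\[ H^1_{\et}(U;\cow{\wt}) \;\simeq\; H^1_{\et}(U^\prime;\ZZ^n). \]

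The problem then reduces to the classical vanishing $H^1_{\et}(V;\ZZ) = 0$ for any qcqs scheme $V$. Decomposing $V$ into its finitely many connected components, one has $H^1_{\et}(V_i;\ZZ) = \hom_{\mathrm{cts}}(\pi_1^{\et}(V_i),\ZZ)$, which is zero because $\pi_1^{\et}(V_i)$ is profinite while $\ZZ$ is discrete and torsion-free: any continuous homomorphism must have open, finite-index kernel and therefore finite, hence trivial, image. The only mildly subtle point in the whole argument is the identification $\cow{\wt} \simeq q_\ast \cow{\wt^\prime}$; everything else is essentially formal. In fact, this argument shows that the presheaf $U \mapsto H^1_{\et}(U;\cow{\wt})$ is zero on the nose, not merely Nisnevich-locally, so the stated Nisnevich sheafification is trivially zero as well.
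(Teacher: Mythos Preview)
Your argument follows essentially the same route as the paper's proof: reduce to showing the presheaf $U\mapsto H^1_{\et}(U;\cow{\wt})$ vanishes, identify $\cow{\wt}\simeq q_\ast\ZZ^n$, and use exactness of $q_\ast$ for finite \'etale $q$ to reduce to $H^1_{\et}(U';\ZZ)=0$.

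There is, however, one genuine overreach. Your claim that $H^1_{\et}(V;\ZZ)=0$ for \emph{any} qcqs scheme $V$, and the identification $H^1_{\et}(V_i;\ZZ)\simeq\hom_{\mathrm{cts}}(\pi_1^{\et}(V_i),\ZZ)$ underlying it, are false in this generality: for a nodal cubic $C$ over an algebraically closed field one has $H^1_{\et}(C;\ZZ)\cong\ZZ$. The identification with continuous homomorphisms out of the (profinite, SGA1) \'etale fundamental group requires $V$ to be geometrically unibranch; otherwise one must use the larger SGA3/pro-\'etale fundamental group, which need not be profinite. The paper is explicit about this: it observes that $\base=\Spec F$ is geometrically unibranch, and that $U'$, being smooth over a finite \'etale extension of $\base$, is therefore geometrically unibranch as well, so that $\pi_1^{\et}(U')$ is profinite and the vanishing follows. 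In your setting the schemes $U'$ are smooth over a field, hence normal, so the fix is simply to insert this hypothesis; but as written the assertion is incorrect.
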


\begin{proof}
By definition, $\pi_{-1}^{\Nis}(V_{\ast} \cow{\wt})$ is the Nisnevich sheafification
of the presheaf 
\[ \Sm{S}^{\op}\too \Ab, \quad  U\mapsto H^1_{\et}(U;\cow{\wt}) \,. \] 
We claim that this presheaf is already the zero sheaf.
Indeed, we have natural equivalences
\[
\cow{\wt} \simeq \map_{\ZZ_{\base}}(\wt,\GG_m) 
\simeq \map_{\ZZ_{\base}}(q_*\wt',\GG_m)   
\simeq  q_* \map_{\ZZ_{\base^{\prime}}}(\wt',q^*\GG_{m}) \simeq q_*\cow{\wt'}.
\] 
As a result, for any $U\in \Sm{\base}$ we get that 
\[
H^1_{\et}(U;\cow{\wt}) \simeq H^1_{\et}(U_{\base'};\cow{\wt'}) \,,
\]
where $\cow{\wt^{\prime}}\simeq \ZZ^{d}_{\base^{\prime}}$ is a constant sheaf. The right hand side is thus computed by the set of continuous
group homomorphisms
\[ H^1_{\et}(U_{\base'};\cow{\wt'}) \simeq
\rm{Hom}_{\rm{cont}}(\pi_{1}^{\et} U^\prime,\ZZ^{d}),\]
where $\ZZ^{d}$ carries the discrete topology. Since $\base=\Spec{F}$ is a field it is in
particular geometrically unibranch. Since $U$ is smooth over $\base$ and $\base^{\prime}$ 
is finite \'{e}tale over $\base$, $U'$ is also geometrically unibranch and thus 
$\pi_{1}^{\et}U'$ is a pro-finite group. In particular
there are no non-trivial continuous group homomorphisms $\pi_{1}^{\et} U' \to \ZZ^d$
and so $\pi_{-1}^{\Nis}V_{\ast} \cow{\wt}$ is the sheafification of the
zero presheaf, hence itself zero.
\end{proof}

From this ''acyclicity'' lemma about Weil restricted tori we learn the following.

\begin{prop}\label{cofiber sequence lat}
Let $\base$ be geometrically unibranch and $\t$ a torus over $\base$. 
    Given any finite \'etale extension $\base^{\prime}\to \base$ that splits $\t$,
    the sequence of connective Nisnevich spectra
    \[ v_{\ast} \cow{\wt} \too v_{\ast} \cow{\kt} \too v_{\ast} \Sigma \cow{\t}\]
    obtained by applying the forgetful functor $v_{\ast}\colon \etSpcn(\base)\to \nsSpcn(\base)$ 
    to the rotated Weil resolution~\eqref{eq:exact_seq_char}, is a cofiber sequence 
    in $\nsSpcn(\base)$.
\end{prop}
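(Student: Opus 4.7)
The plan is to produce a canonical comparison map from the Nisnevich cofiber to $v_*\Sigma\cow{\t}$ and verify it is an equivalence on Nisnevich homotopy sheaves. Since $v_*=\tau_{\geq 0}^{\Nis}\circ V_*\circ i^{\et}$ is a composite of right adjoints it preserves fiber sequences but not a priori cofiber sequences; however, the composition $v_*\cow{\wt}\to v_*\cow{\kt}\to v_*\Sigma\cow{\t}$ is canonically null (as the image of a null composition in $\Spcn_{\et}$), so the universal property of the cofiber in $\nsSpcn(\base)$ produces a natural comparison map
\[
  \varphi\colon \operatorname{cofib}\bigl(v_*\cow{\wt}\too v_*\cow{\kt}\bigr) \too v_*\Sigma\cow{\t},
\]
and it suffices to check that $\varphi$ induces isomorphisms on every $\pi_n^{\Nis}$.

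The sheaves $v_*\cow{\wt}$ and $v_*\cow{\kt}$ are Eilenberg--MacLane in degree zero, so the long exact sequence of the cofiber yields $\pi_1^{\Nis}(\operatorname{cofib}) = \ker(\cow{\wt}\to\cow{\kt}) = \cow{\t}$ and $\pi_0^{\Nis}(\operatorname{cofib}) = \operatorname{coker}(\cow{\wt}\to\cow{\kt})$ as Nisnevich abelian sheaves, with all other degrees vanishing. Unwinding $v_*=\tau_{\geq 0}^{\Nis}V_*i^{\et}$ and using that $V_*i^{\et}\Sigma\cow{\t}$ has $\pi_n^{\Nis}$ equal to the Nisnevich sheafification of $H^{1-n}_{\et}(-;\cow{\t})$, one similarly obtains $\pi_1^{\Nis}(v_*\Sigma\cow{\t}) = \cow{\t}$, $\pi_0^{\Nis}(v_*\Sigma\cow{\t}) = \pi_{-1}^{\Nis}V_*\cow{\t}$, and zero in all other degrees. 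The $\pi_1^{\Nis}$-comparison is the identity on $\cow{\t}$.

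For $\pi_0^{\Nis}$, the exactness of Nisnevich sheafification applied to the \'etale cohomology long exact sequence of the Weil short exact sequence $0\to\cow{\t}\to\cow{\wt}\to\cow{\kt}\to 0$ identifies
\[
  \operatorname{coker}(\cow{\wt}\to\cow{\kt}) = \ker\bigl(\pi_{-1}^{\Nis} V_*\cow{\t}\too\pi_{-1}^{\Nis} V_*\cow{\wt}\bigr),
\]
and the map on $\pi_0^{\Nis}$ induced by $\varphi$ is precisely the inclusion of this kernel into $\pi_{-1}^{\Nis}V_*\cow{\t}$. The vanishing lemma \cref{vanishing-pi-1} provides $\pi_{-1}^{\Nis}V_*\cow{\wt}=0$, forcing this kernel to be the full $\pi_{-1}^{\Nis}V_*\cow{\t}$, so $\varphi$ becomes an equivalence on $\pi_0^{\Nis}$. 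This last step is the main obstacle: without the vanishing lemma, $\operatorname{coker}(\cow{\wt}\to\cow{\kt})$ would generically be only a proper subsheaf of $\pi_{-1}^{\Nis}V_*\cow{\t}$, and the sequence would genuinely fail to be a cofiber sequence in $\nsSpcn(\base)$.
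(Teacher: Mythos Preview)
Your argument is correct and uses the same key input as the paper, namely the vanishing lemma $\pi_{-1}^{\Nis}V_*\cow{\wt}=0$. The difference is one of packaging. You build the comparison map $\varphi$ by hand and compute the homotopy sheaves of both source and target separately, whereas the paper first observes that $v_*$, being a right adjoint, already makes the sequence a \emph{fiber} sequence in $\nsSpcn(\base)$; then the question of whether a fiber sequence $A\to B\to C$ in connective spectra is also a cofiber sequence reduces to surjectivity of $\pi_0 B\to\pi_0 C$, which via the long exact sequence of the stable (non-connective) restriction is equivalent to the vanishing of the boundary $\pi_0^{\Nis}V_*\Sigma\cow{\t}\to\pi_{-1}^{\Nis}V_*\cow{\wt}$. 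Since the target of that boundary vanishes, the paper is done in one line. Your explicit computation of $\pi_0$ on both sides via the Weil long exact sequence is exactly the unpacking of this criterion, and your identification of $\pi_1(\varphi)$ is implicitly the fiber-sequence observation. So the two proofs are the same in content; the paper's route just avoids computing anything beyond what is strictly needed.
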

\begin{proof}
Since the functor $v_{\ast}$ is a right adjoint and hence preserves limits, the sequence in
the statement is a \emph{fiber} sequence. To show that it is a \emph{cofiber} sequence 
of sheaves of connective spectra, it suffices to show that boundary map 
\[ \pi_0^{\Nis}V_{\ast} \Sigma \cow{\t} \too \pi_{-1}^{\Nis} V_{\ast} \cow{\wt}
 \in \Sp_{\Nis}^{\heart}(\base)\]
 of the non-connective restrictions is the zero map. However, this is clear, since the target
 is $0$ by \Cref{vanishing-pi-1}.  
\end{proof}

In summary, we obtain a pushout square of motivic group rings.

\begin{cor}\label{topEM}
Let $\base$ be geometrically unibranch and $\t/\base$ be a torus.
Then for any Weil resolution of $\t$, the induced cofiber sequence~\eqref{eq:exact_seq_char}
induces a pushout square of motivic group rings
\[\begin{tikzcd}
\KGL_{\base}[\cow{\wt}] \ar[r] \ar[d] & \KGL_{\base} \ar[d]\\
\KGL_{\base} [\cow{\kt}] \ar[r] & \KGL_{\base}[ \Sigma \cow{\t}].
\end{tikzcd}\]
in the category of commutative $\KGL_{\base}$-algebras $\CAlg_{\KGL_{\base}}(\SH{\base})$. 
\end{cor}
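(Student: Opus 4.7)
My plan is to deduce the pushout square formally from the cofiber sequence provided by \Cref{cofiber sequence lat}, by applying a colimit-preserving functor.

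First, I will interpret the cofiber sequence $v_\ast\cow{\wt} \to v_\ast\cow{\kt} \to v_\ast\Sigma\cow{\t}$ of \Cref{cofiber sequence lat} as a pushout square
\[\begin{tikzcd}
v_\ast\cow{\wt} \ar[r] \ar[d] & 0 \ar[d]\\
v_\ast\cow{\kt} \ar[r] & v_\ast\Sigma\cow{\t}
\end{tikzcd}\]
in the pointed category $\nsSpcn(\base)$ of Nisnevich sheaves of connective spectra. Next, I observe that the group algebra functor $\KGL_\base[-]\colon \Ab(\Ettop{\base}) \to \CAlg_{\KGL_\base}(\SH{\base})$ was defined as the composite of the forgetful functor $v_\ast$ with a functor that, after the forgetful step, factors through $\nsSpcn(\base)$; this latter functor is precisely the group algebra functor for the presentably symmetric monoidal category $\CAlg_{\KGL_\base}(\SH{\base})$, namely the left adjoint to the spectrum of units $(-)^\times$ recalled in \Cref{constr: units}. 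As a left adjoint it preserves all colimits, and in particular sends the zero spectrum to the initial $\KGL_\base$-algebra $\KGL_\base$.

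Applying this left adjoint to the pushout square above then immediately yields the target pushout
\[\begin{tikzcd}
\KGL_\base[\cow{\wt}] \ar[r] \ar[d] & \KGL_\base \ar[d]\\
\KGL_\base[\cow{\kt}] \ar[r] & \KGL_\base[\Sigma\cow{\t}]
\end{tikzcd}\]
in $\CAlg_{\KGL_\base}(\SH{\base})$. I do not expect any genuine obstacle in this step: the geometric content — in particular the use of the geometrically unibranch hypothesis — has already been absorbed into the $\pi_{-1}$-vanishing of \Cref{vanishing-pi-1} and the resulting cofiber sequence of \Cref{cofiber sequence lat}, so the corollary becomes a formal consequence of the adjunction defining the group algebra.
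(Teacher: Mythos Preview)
Your proposal is correct and matches the paper's own proof essentially verbatim: the paper simply notes that the group ring functor $\SS[-]\colon \nsSpcn(\base)\to \CAlg(\SH{\base})$ is colimit preserving by construction, as is base change to $\KGL_\base$, and then invokes \Cref{cofiber sequence lat}. Your version is slightly more explicit in identifying the group algebra as a left adjoint and in reformulating the cofiber sequence as a pushout square with a zero corner, but the argument is the same.
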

\begin{proof}
The group ring functor $\SS[-]\colon \nsSpcn(\base) \to \CAlg(\SH{\base})$ is colimit
preserving by construction and so is the base change to $\KGL_{\base}$, so the claim follows from
\cref{cofiber sequence lat}.
\end{proof}

This describes the \emph{source} of the motivic Fourier transform 
$\Four_{\rm{mot}}^{\Sigma \cow{\t}}$ as a pushout of objects where we know 
$\Four_{\rm{mot}}$ to be an equivalence by \cref{thm: Fourier computation KGL}.
It remains to analyze the target.

Recall from \cref{defn: picard} that we defined the Cartier dual functor $(-)^{\vee}$
as the internal mapping object $\map_{\ZZ_{\base}}(-, \Sigma \Gm)$.
In particular $(-)^{\vee}$ takes cofiber sequences to fiber sequences.
Thus, applying $(-)^{\vee}$ to~\eqref{eq:exact_seq_char} and using the identifications
of \cref{lem: torus dual computation}, we get a pullback diagram on  
underlying stacks of the form
\begin{equation}\label{pullback tori}
   \begin{tikzcd}
       \t \ar[d] \ar[r] & \base \ar[d]\\
       \deloop{\kt} \ar[r] & \deloop{\wt}\, .
   \end{tikzcd}
\end{equation}

We claim that this pullback square induces a pushout in $\KGL_{\base}$-cohomology.

\begin{prop}[Eilenberg--Moore]\label{algEM}
    Let $\base=\Spec{\baf}$ be a field and $\t$ be a torus over $\base$.
    Given any Weil resolution
    \[ 0\to \kt \to \wt \to \t \to 0\]
    of $\t$, the associated pullback square \eqref{pullback tori} induces a pushout diagram 
    \[
    \begin{tikzcd}
       \KGL_{\base}^{\deloop{\wt}} \ar[r] \ar[d] & \KGL_{\base} \ar[d]\\
       \KGL_{\base}^{\deloop{\kt}} \ar[r] & \KGL_{\base}^{\t}.
    \end{tikzcd}
    \]
    in the category of commutative $\KGL_{\base}$-algebras $\CAlg_{\KGL_{\base}}(\SH{\base})$.
\end{prop}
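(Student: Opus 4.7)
The plan is to lift the entire square to $\SH{\deloop{\wt}}$ and then push down along $p\colon \deloop{\wt}\to\base$. Writing $s\colon \base\to\deloop{\wt}$ for the zero section, $j\colon\deloop{\kt}\to\deloop{\wt}$, $h\colon\t\to\deloop{\kt}$ and $g\colon\t\to\base$, both $s$ and $j$ are torsors by tori and therefore smooth representable but not proper. Unraveling \Cref{not: KGL cohom} and using that $\KGL$ is stable under smooth pullback (so $s^{\ast}\KGL_{\deloop{\wt}}\simeq\KGL_{\base}$ and $h^{\ast}\KGL_{\deloop{\kt}}\simeq\KGL_{\t}$, and $ps=\id$), the desired square in $\SH{\base}$ is the image under $p_{\ast}$ of
\[
\begin{tikzcd}
\KGL_{\deloop{\wt}} \ar[r] \ar[d] & s_{\ast}\KGL_{\base} \ar[d]\\
j_{\ast}\KGL_{\deloop{\kt}} \ar[r] & (sg)_{\ast}\KGL_{\t}
\end{tikzcd}
\]
in $\CAlg_{\KGL_{\deloop{\wt}}}(\SH{\deloop{\wt}})$. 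The task therefore splits into two parts: first show that this inner square is a pushout of $\KGL_{\deloop{\wt}}$-algebras, and then check that $p_{\ast}$ preserves this pushout; the latter is routine using $\KGL_{\base}$-linearity of $p_{\ast}$ together with \Cref{prop:Bott-periodization-is-localization}.

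Being a pushout reduces, via the usual base change plus projection formula heuristic, to two statements internal to $\SH{\deloop{\wt}}$: the base change equivalence $s^{\ast}j_{\ast}\KGL_{\deloop{\kt}}\simeq g_{\ast}\KGL_{\t}$, and the projection formula $s_{\ast}\KGL_{\base}\otimes_{\KGL_{\deloop{\wt}}}(-)\simeq s_{\ast}s^{\ast}(-)$ applied to $E=j_{\ast}\KGL_{\deloop{\kt}}$. Since neither $s$ nor $j$ is proper, neither of these is supplied by \Cref{thm: base change formulas} or \Cref{thm: proper proj formula} on the nose, and the smooth $(-)_{\sharp}$-projection formula of \Cref{thm: lower sharp proj formula} only handles $s_{\sharp}$, not $s_{\ast}$.

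The key geometric input bridging this gap is the Artin--Tate property of the zero section (\Cref{prop:Artin-Tate-intro}): the motive $s_{\sharp}\SS_{\base}\in\SH{\deloop{\wt}}$ lies in the thick subcategory generated by twists of $q_{\sharp}\SS$ for $q$ finite \'etale representable over $\deloop{\wt}$. After base change to $\Mod{\SH{\deloop{\wt}}}{\KGL_{\deloop{\wt}}}$ and Bott periodization (\Cref{prop:Bott-periodization-is-localization}), all Thom twists collapse to the unit, so $s_{\ast}\KGL_{\base}$ acquires a cellular presentation purely in terms of finite-\'etale pushforwards $q_{\ast}\KGL_{Y}$. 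On each such building block finite-\'etale semi-additivity (\Cref{lem: fet semi-add}) identifies $q_{\sharp}$ with $q_{\ast}$, so that the base change and projection formulas become special cases of the smooth $(-)_{\sharp}$ projection formula already available from \Cref{thm: lower sharp proj formula}. A cellular induction over the Artin--Tate filtration then propagates both equivalences from the finite-\'etale cells to $s_{\ast}\KGL_{\base}$ itself, yielding the Eilenberg--Moore pushout.

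The main obstacle of the proof is thus concentrated in \Cref{prop:Artin-Tate-intro}: one must exhibit an explicit Artin--Tate cell structure on $\pt\to\deloop{\wt}$ using crucially that $\wt$ is Weil-restricted from a split torus, following the inductive strategy of Merkurjev--Panin. Everything else in the argument is formal six-functor manipulation combined with Bott periodicity and the semi-additivity of the finite-\'etale locus.
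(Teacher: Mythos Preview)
Your overall architecture matches the paper's: lift the square to $\SH{\deloop{\wt}}$, establish the K\"unneth equivalence there, and then push down along $p_{\ast}$. Your treatment of the inner square via the Artin--Tate cellular decomposition of $s_{\ast}\KGL_{\base}$, Bott periodicity, and finite-\'etale ambidexterity is essentially the content of the paper's \Cref{prop: Kunneth} and \Cref{cor: torus is artin}.

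The genuine gap is in your second step. You assert that ``$p_{\ast}$ preserves this pushout'' is ``routine using $\KGL_{\base}$-linearity of $p_{\ast}$ together with \Cref{prop:Bott-periodization-is-localization}''. This is not routine, and the premise is false: the structure map $p\colon\deloop{\wt}\to\base$ is not representable, so no projection formula for $p_{\ast}$ is available from the six-functor package, and $p_{\ast}$ is only lax monoidal. What you need is precisely that the comparison map
\[
p_{\ast}(s_{\ast}s^{\ast}\KGL_{\deloop{\wt}})\otimes_{p_{\ast}\KGL_{\deloop{\wt}}}p_{\ast}(j_{\ast}j^{\ast}\KGL_{\deloop{\wt}})\too p_{\ast}\bigl(s_{\ast}s^{\ast}\KGL_{\deloop{\wt}}\otimes_{\KGL_{\deloop{\wt}}}j_{\ast}j^{\ast}\KGL_{\deloop{\wt}}\bigr)
\]
is an equivalence, and this is the substance of the paper's abstract Eilenberg--Moore result \Cref{EM-for-R-compactifiable}. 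The same Artin cellular reduction you used for the K\"unneth step is applied again here, but it requires an additional geometric input you omitted: the stack $\deloop{\wt}$ is \'etale simply connected over $\base$ (\Cref{lem: pi1 of BT}). This guarantees that every finite \'etale cover $q\colon\fY\to\deloop{\wt}$ arising in the Artin decomposition is pulled back from a finite \'etale cover of $\base$, which is exactly the hypothesis needed to invoke proper base change and the proper projection formula (\Cref{prop: em proper transfer}) cell by cell. Without \'etale simple connectedness, the finite-\'etale building blocks $q_{\ast}q^{\ast}\KGL_{\deloop{\wt}}$ have no a priori reason to lie in $\cC(N,p)$, and your cellular induction does not get off the ground for the $p_{\ast}$ step.
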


The proof of \cref{algEM} will be the topic of \cref{ssec: EM}. Before we delve into this,
we observe that this immediately results in our main theorem.

\begin{thm}\label{thm: mainthm}
Let $F$ be a field and write $\base = \Spec{\baf}$. For any torus $\t$ with character
lattice $\cow{\t}$, the motivic Fourier transform 
of \cref{defn: mot fourier} gives a natural equivalence
\[ \Four^{\Sigma \cow{\t}}_\rm{mot} \colon \KGL_{\base}[\Sigma \cow{\t}]
\xrightarrow{\sim} \KGL_{\base}^{\t}\,,\]
of commutative $\KGL_{\base}$-algebras in $\SH{\base}$. 
\end{thm}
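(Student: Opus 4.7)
The plan is to combine three results already collected in this section via the naturality of the motivic Fourier transform. Namely, \cref{topEM} presents $\KGL_{\base}[\Sigma \cow{\t}]$ as a pushout of group rings of character lattices; \cref{algEM} (to be proved below) presents $\KGL_{\base}^{\t}$ as the corresponding pushout of cohomologies of classifying stacks; and \cref{thm: Fourier computation KGL} identifies the motivic Fourier transform on the character-lattice terms as equivalences. The naturality of $\Four_{\mathrm{mot}}^{(-)}$ in $M \in \ZMod{\base}$, built into \cref{defn: mot fourier}, glues these pieces together.

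Concretely, I would fix any splitting extension $\base^{\prime} \to \base$ and form the associated Weil resolution $0 \to \kt \to \wt \to \t \to 0$ as in \cref{cons: Weil resolution}. By \cref{cofiber sequence lat} this produces a cofiber sequence $\cow{\wt} \to \cow{\kt} \to \Sigma \cow{\t}$ in $\nsSpcn(\base)$, and by \cref{lem: torus dual computation} the shifted Cartier duals of these three objects are representable respectively by the stacks $\deloop{\wt}$, $\deloop{\kt}$, and $\t$; in particular, the motivic Fourier transform is defined on each term. Applying $\Four_{\mathrm{mot}}^{(-)}$, together with the map of zero objects, yields a morphism of commutative squares in $\CAlg_{\KGL_{\base}}(\SH{\base})$ from the pushout square of \cref{topEM} to the pushout square of \cref{algEM}.

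At three of the four corners, this morphism is already known to be an equivalence: the corners $\Four_{\mathrm{mot}}^{\cow{\wt}}$ and $\Four_{\mathrm{mot}}^{\cow{\kt}}$ by \cref{thm: Fourier computation KGL}, and the corner $\Four_{\mathrm{mot}}^{0} \colon \KGL_{\base} \to \KGL_{\base}$ which is manifestly the identity, since $0^{\vee} = 0$ and $\KGL_{\base}[0] = \KGL_{\base} = \KGL_{\base}^{\pt}$. Since the formation of pushouts preserves equivalences, the induced map on pushouts $\Four_{\mathrm{mot}}^{\Sigma \cow{\t}}$ is itself an equivalence in $\CAlg_{\KGL_{\base}}(\SH{\base})$, proving the theorem.

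The substance of the argument is thus entirely packaged into \cref{topEM} and \cref{algEM}. The former is already in hand, while the main obstacle remaining in the section is \cref{algEM}: an Eilenberg--Moore formula for a pullback square whose vertical leg $\pt \to \deloop{\wt}$ is not proper, and so not directly amenable to the smooth-and-proper base change of \cref{thm: base change formulas}. This is where the Artin--Tate cellularity property of \cref{prop:Artin-Tate-intro} will enter as the crucial geometric input.
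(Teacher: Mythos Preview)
Your proposal is correct and follows essentially the same approach as the paper's proof: choose a Weil resolution, use the naturality of $\Four_{\mathrm{mot}}^{(-)}$ to obtain a map of squares between the pushout of \cref{topEM} and that of \cref{algEM}, and conclude from \cref{thm: Fourier computation KGL} (plus the trivial corner) that the induced map on pushouts is an equivalence. The paper merely displays this as an explicit cube diagram rather than describing it verbally.
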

\begin{proof}
First note that by \cref{lem: torus dual computation} we have 
$(\Sigma \cow{\t})^{\vee}\simeq \t$, so the motivic Fourier transform
for $\Sigma\cow{\t}$ is defined and has target $\KGL_{\base}^{\t}$.
For any choice of splitting field $E/\baf$, the associated Weil resolution
combined with the naturality of the Fourier transform provides us with a commutative
diagram in the category $\CAlg_{\KGL}(\SH{\base})$ of the form:
\[\begin{tikzcd}
	&&& {\KGL_{\base}^{\deloop{\wt}}} && \KGL_{\base} \\
	{\KGL_{\base}[\cow{\wt}]} && \KGL_{\base} & {\KGL_{\base}^{\deloop{\kt}}} && {\KGL_{\base}^{\t}} \\
	{\KGL_{\base}[\cow{\kt}]} && {\KGL_{\base}[\Sigma\cow{\t}]}
	\arrow[from=1-4, to=1-6]
	\arrow[from=1-4, to=2-4]
	\arrow[from=1-6, to=2-6]
	\arrow["{\Four_{\rm{mot}}^{\cow{\wt}}}", from=2-1, to=1-4]
	\arrow[from=2-1, to=2-3]
	\arrow[from=2-1, to=3-1]
	\arrow["\id"{pos=0.3}, from=2-3, to=1-6]
	\arrow[from=2-3, to=3-3]
	\arrow[""{name=0, anchor=center, inner sep=0}, from=2-4, to=2-6]
	\arrow["{\Four_{\rm{mot}}^{\cow{\kt}}}"{pos=0.4}, from=3-1, to=2-4]
	\arrow[""{name=1, anchor=center, inner sep=0}, from=3-1, to=3-3]
	\arrow["{\Four_{\rm{mot}}^{\Sigma\cow{\t}}}"', from=3-3, to=2-6]
\end{tikzcd}\]
Now \cref{topEM} and \cref{algEM} tell us that the front and back planar squares are pushouts, respectively.
Moreover, by \cref{thm: Fourier computation KGL}, we know that the Fourier 
transforms $\Four_{\rm{mot}}^{\cow{\wt}}$ and $\Four_{\rm{mot}}^{\cow{\kt}}$ are
equivalences and so the lower right map 
\[\Four_{\rm{mot}}^{\Sigma\cow{\t}}\colon \KGL_{\base}[\Sigma\cow{\t}]\too \KGL_{\base}^{\t}\] 
is an equivalence as well, as claimed.
\end{proof}

Note that, for a split torus, this recovers the highly structured version of Quillen's
computation \cite[Theorem 8]{QuillenKTheory} of the algebraic $K$-groups of the scheme $\Gm$: 
Let $\baf$ be a field and $t\in \sO(\Gm)$ be a choice of coordinate Considering $t$ as an
automorphism of the trivial line bundle on $\Gm$, we learn that the associated map
   \[ t\colon S^1 \too \rm{QCoh}(\Gm)\] 
   induces an equivalence of commutative $K(\baf)$-algebras
    \[  K(\baf)[S^1]\xrightarrow{\sim} K(\Gm) \,.\]    
In \cref{sec: Kthy top mirror}, we leverage 
\cref{thm: mainthm} computationally by transferring our result into the world of 
equivariant homotopy theory and obtain a spectral sequence computing $K(\t)$.
Before we can reap those rewards, we must prove \cref{algEM}. To this end,
we need to introduce some further machinery.

\subsection{Artin--Tate motives}\label{ssec: EM}

The crucial technical input for \cref{algEM} will be that, for a Weil restricted
torus $\wt$, the zero section $\base \to \deloop{\wt}$ satisfies a strong
cellularity condition. Namely, it is \emph{Artin--Tate} in the sense
of~\cite[\href{https://arxiv.org/pdf/2010.10325v2\#nul.1.5}{Definition 1.5}]{artin_tate_recons}. We review the notions of Artin(--Tate) motives\footnote{Our definition is a small category version of the definition in \cite{artin_tate_recons}.}
and discuss in \Cref{EM-for-R-compactifiable} how, for an stack $p\colon \fX\to \base$ such that $p$ induces an equivalence
on $\pi_1^{\et}$, any pullback diagram of smooth representable stacks over $\fX$
gives an Eilenberg--Moore formula for $\KGL^{\fX}\in \SH{\base}$, if one of the legs is Artin--Tate.

Let $\fX$ be a stack and $R\in \CAlg(\SH{\fX})$ be a motivic commutative ring spectrum.
A \tdef{virtual vector bundle} on $\fX$ is a class $\sE\in \pi_0K(\fX)$.
Recall that we denote by 
\[ \mdef{R\twist{-}}\colon  \pi_0K(\fX)\too \Pic(\Mod{\SH{\fX}}{R}) \]
the motivic Thom-construction.

\begin{defn}\label{def:comp}
    Let $R \in \CAlg(\SH{\fX})$ a motivic commutative ring spectrum over $\fX$.
    We say that an object of $\Mod{\SH{\stack}}{R}$ is:
    \begin{enumerate}
        \item \tdef{Artin--Tate} if it contained in the thick subcategory generated by the objects of the form 
        \[\psi_\sharp ((\psi^\ast R)\twist{\sE}),\]
    where $\psi \colon \fY \to \fX$ is finite \'{e}tale and $\sE$ is a virtual
    vector bundle on $\fY$.
        \item \tdef{Artin} if it is contained in the thick subcategory generated by objects of the form 
        \[\psi_\sharp \psi^\ast R,\] 
        where $\psi \colon \fY \to \fX$ is finite étale.
    \end{enumerate}
    Moreover, we say that a smooth representable stack $\varphi\colon\fY \to \fX$ is 
    \tdef{Artin--Tate over $R$} (or \tdef{Artin over $R$}, respectively) if 
    $\varphi_\sharp \varphi^\ast R\in \Mod{\SH{\stack}}{R}$ is Artin--Tate
    (or Artin, respectively). If $R$ is the unit, we omit the phrase \enquote{over $R$}.
\end{defn}
\begin{rem}
Clearly, every Artin motive is Artin--Tate. Moreover, recall that, for a finite
\'etale map $\psi$, we have a natural equivalence
$\psi_\sharp \simeq \psi_*$, hence we can freely replace $\psi_\sharp$ by
$\psi_\ast$ when working with \Cref{def:comp}.
\end{rem}

Our goal is to show that if $W$ is the Weil restriction of a split torus along a finite 
\'etale map, then the $W$-equivariant motive of $W$ itself, i.e.~the smooth map of
stacks $\pt \too \deloop{W}$ is Artin--Tate and hence Artin over $\KGL_{\deloop{W}}$ 
by \cref{lem:Artin-Tate-implies-KGL-Artin}. To do this, let us first gather some general facts about Artin and Artin--Tate motives.

\begin{lem}\label{prop:Artin-Tate-are-dualizable}
All Artin--Tate motives are dualizable.
\end{lem}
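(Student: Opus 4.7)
The plan is to exhibit dualizability for each of the generators in \cref{def:comp}, and then invoke the fact that dualizable objects form a thick subcategory to conclude. Recall that in any stable symmetric monoidal $\infty$-category, the dualizable objects are closed under retracts, shifts, and cofibers, and hence form a thick subcategory. Therefore it is enough to show that every object of the form $\psi_\sharp((\psi^*R)\twist{\sE})$ with $\psi\colon \fY \to \fX$ finite \'etale and $\sE$ a virtual vector bundle on $\fY$ is dualizable in $\Mod{\SH{\fX}}{R}$.

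For the first step, one observes that the motivic Thom construction factors through $\Pic(\SH{\fY})$, so that $\SS_{\fY}\twist{\sE}$ is $\otimes$-invertible with inverse $\SS_{\fY}\twist{-\sE}$. Base changing to $R$-modules via $\psi^*$, this immediately implies that $(\psi^*R)\twist{\sE}$ is a $\otimes$-invertible object of $\Mod{\SH{\fY}}{\psi^*R}$, and in particular dualizable with dual $(\psi^*R)\twist{-\sE}$.

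For the second step, apply Atiyah duality (\cref{thm: Atiyah Duality}). Since $\psi$ is finite \'etale, it is smooth, representable, and proper, with trivial cotangent complex $\mathcal{L}_\psi \simeq 0$; in particular the natural transformation $\psi_\sharp\twist{-\mathcal{L}_\psi} \to \psi_*$ is the identification $\psi_\sharp \simeq \psi_*$ of \cref{lem: fet semi-add}. \Cref{thm: Atiyah Duality}(2) then shows that $\psi_\sharp$ carries dualizable objects to dualizable objects, with dual of $\psi_\sharp A$ given by $\psi_\sharp(A^\vee)$. Applying this to $A = (\psi^*R)\twist{\sE}$ yields the desired dualizability, with explicit dual $\psi_\sharp((\psi^*R)\twist{-\sE})$.

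The only minor subtlety is that Atiyah duality is stated in \cref{thm: Atiyah Duality} for $\SH{\fX}$ rather than for $R$-modules, but the proof goes through verbatim in the $R$-linear setting: the smooth and proper projection formulas of \cref{thm: lower sharp proj formula} and \cref{thm: proper proj formula} are $R$-linear, so the string of natural equivalences in the proof of \cref{thm: Atiyah Duality} transports to $\Mod{\SH{\fX}}{R}$.
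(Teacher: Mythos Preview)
Your proof is correct and follows essentially the same approach as the paper: reduce to the generators via the thick subcategory of dualizables, use invertibility of Thom spectra, and invoke Atiyah duality (\cref{thm: Atiyah Duality}) to see that $\psi_\sharp$ preserves dualizable objects for finite \'etale $\psi$. Your treatment is slightly more detailed than the paper's terse version, in particular your explicit remark on lifting the projection-formula argument to the $R$-linear setting is a welcome addition.
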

\begin{proof}
Note that Thom spectra are invertible. Since dualizable objects form a 
thick subcategory closed under tensor products, it suffices to show that, for a
finite \'etale map $\psi$, the functor $\psi_{\sharp}$ preserves dualizable objects,
which is immediate from Atiyah duality \cref{thm: Atiyah Duality}.
\end{proof}

\begin{lem}\label{lem:artin-tate-base change}
    Let $f\colon R \to S$ be a morphism in $\CAlg(\SH{\fX})$. Then the base change along
    $f$ preserves the class of Artin--Tate and Artin objects.
\end{lem}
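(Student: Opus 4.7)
The plan is to argue that base change is a symmetric monoidal left adjoint, and then simply compute its effect on the generating objects of \Cref{def:comp} using the smooth projection formula.

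First, I would observe that the extension-of-scalars functor
\[ (-)\otimes_R S \colon \Mod{\SH{\fX}}{R} \too \Mod{\SH{\fX}}{S} \]
is a symmetric monoidal left adjoint. In particular, it is an exact functor between stable categories and preserves arbitrary colimits, so it sends the thick subcategory generated by a set of objects $G\subseteq \Mod{\SH{\fX}}{R}$ into the thick subcategory generated by the set $\{g\otimes_R S : g\in G\}$ inside $\Mod{\SH{\fX}}{S}$. It thus suffices to check that generators of the Artin--Tate (resp.~Artin) class for $R$ are sent to generators of the same type for $S$.

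To this end, fix a finite \'etale map $\psi\colon \fY\to \fX$ and a virtual vector bundle $\sE$ on $\fY$. Since $\psi$ is smooth and representable, the smooth projection formula of \Cref{thm: lower sharp proj formula} gives a natural identification
\[ \psi_{\sharp}\bigl((\psi^{\ast}R)\twist{\sE}\bigr)\;\simeq\;\psi_{\sharp}\bigl(\psi^{\ast}R\otimes \SS_{\fY}\twist{\sE}\bigr)\;\simeq\; R\otimes \psi_{\sharp}\SS_{\fY}\twist{\sE}, \]
which in particular exhibits this object as the free $R$-module on $\psi_{\sharp}\SS_{\fY}\twist{\sE}\in \SH{\fX}$. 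Tensoring over $R$ with $S$ and applying the projection formula once more yields
\[ \psi_{\sharp}\bigl((\psi^{\ast}R)\twist{\sE}\bigr)\otimes_R S \;\simeq\; S\otimes \psi_{\sharp}\SS_{\fY}\twist{\sE} \;\simeq\; \psi_{\sharp}\bigl((\psi^{\ast}S)\twist{\sE}\bigr), \]
which is the corresponding Artin--Tate generator over $S$. Specializing to $\sE=0$ handles the Artin case.

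There is no substantive obstacle here; the argument is essentially a bookkeeping exercise combining monoidality of $(-)\otimes_R S$ with the projection formula for finite \'etale maps. The only point that one needs to be mildly careful about is the natural $R$-module structure on $\psi_\sharp((\psi^\ast R)\twist{\sE})$, which is encoded precisely by the projection formula isomorphism above and is visibly compatible with base change.
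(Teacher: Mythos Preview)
Your proposal is correct and follows essentially the same approach as the paper: both use the smooth projection formula of \Cref{thm: lower sharp proj formula} to identify the base change of a generator $\psi_\sharp((\psi^\ast R)\twist{\sE})$ with the corresponding generator over $S$, and then conclude via exactness of base change and thickness of the subcategories. The only cosmetic difference is that you pass through the free-module identification $\psi_\sharp((\psi^\ast R)\twist{\sE})\simeq R\otimes \psi_\sharp\SS_\fY\twist{\sE}$ in $\SH{\fX}$, whereas the paper applies the projection formula directly at the level of $R$-modules; both amount to the same computation.
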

\begin{proof}
    Let $\phi \colon \fY \to \fX$ be finite \'etale and $\sE$ be a virtual vector
    bundle on $\fY$. 
    Then, by the projection formula for the adjunction $\phi_{\sharp}\dashv \phi^{\ast}$
    of \cref{thm: lower sharp proj formula}, we obtain an equivalence in
    $\Mod{\SH{\fX}}{S}$ of the form
\[
    S \otimes_R \phi_\sharp \phi^\ast R\twist{\sE} 
    \simeq \phi_\sharp (\phi^\ast S \otimes_{\phi^{\ast} R} \phi^\ast R\twist{\sE})
    \simeq \phi_\sharp \phi^\ast S \twist{\sE}
\]
    and so $S\otimes_R \phi_{\sharp} \phi^{\ast} R\twist{\sE}$ is Artin--Tate.
    Thus, the claim follows, as base change is exact and the respective subcategories
    are thick.
\end{proof}

\begin{lem}\label{lem:Artin-Tate-implies-KGL-Artin}
 Over the K-theory spectrum $\KGL$, a motive is Artin if and only if it is Artin--Tate.
 In particular, if $\fY\in \SmStk{\fX}$ is Artin--Tate, then it is Artin over $\KGL_{\fX}\in \SH{\fX}$.
\end{lem}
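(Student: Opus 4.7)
The plan is to exploit Bott periodicity for $\KGL$, which collapses the Tate twists appearing in the definition of Artin--Tate motives. Concretely, recall from the discussion after \cref{war: completed KGL} that for any stack $\fX$ and any virtual vector bundle $\sE \in K_0(\fX)$, there is a canonical equivalence $\KGL_\fX \twist{\sE} \simeq \KGL_\fX$ in $\Mod{\SH{\fX}}{\KGL_\fX}$.

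First, I would note that the Artin $\Rightarrow$ Artin--Tate direction is immediate from \cref{def:comp}, since Artin generators are the special case of Artin--Tate generators with $\sE = 0$. For the converse, it suffices to show that each generator of the Artin--Tate subcategory over $\KGL_\fX$ is already Artin over $\KGL_\fX$. So let $\psi \colon \fY \to \fX$ be finite étale and $\sE$ a virtual vector bundle on $\fY$. Using the smooth projection formula of \cref{thm: lower sharp proj formula} applied to $\psi$, together with the identification $\psi^\ast \KGL_\fX \simeq \KGL_\fY$ from \cref{defn: KGL}, we get
\[
\psi_\sharp\bigl((\psi^\ast \KGL_\fX)\twist{\sE}\bigr) \simeq \psi_\sharp\bigl(\KGL_\fY \twist{\sE}\bigr) \simeq \psi_\sharp \KGL_\fY \simeq \psi_\sharp \psi^\ast \KGL_\fX,
\]
where the middle equivalence uses Bott periodicity on $\fY$. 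This exhibits the generator as Artin, and since the Artin motives form a thick subcategory, the equality of the two subcategories follows.

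For the \enquote{in particular} statement, suppose $\varphi \colon \fY \to \fX$ is smooth representable with $\varphi_\sharp \varphi^\ast \SS_\fX$ Artin--Tate in $\SH{\fX}$. Applying \cref{lem:artin-tate-base change} to the unit map $\SS_\fX \to \KGL_\fX$ shows that
\[
\KGL_\fX \otimes \varphi_\sharp \varphi^\ast \SS_\fX \simeq \varphi_\sharp \varphi^\ast \KGL_\fX
\]
(using the smooth projection formula again) is Artin--Tate over $\KGL_\fX$. By the first part of the lemma, it is therefore Artin over $\KGL_\fX$, as desired.

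There is no real obstacle here: the argument is essentially a one-line consequence of Bott periodicity for $\KGL$ combined with the smooth projection formula, and the subtlety is only in correctly tracking how the Thom twists interact with base change from $\SS$ to $\KGL$.
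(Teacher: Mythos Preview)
Your proposal is correct and follows essentially the same route as the paper: the paper's proof invokes Bott periodicity $\KGL_{\fX}\twist{\sE}\simeq\KGL_{\fX}$ to obtain the first claim and then appeals to \cref{lem:artin-tate-base change} for the second, which is exactly what you do (with a bit more detail on how the twist on the finite \'etale cover collapses). One minor remark: the smooth projection formula is not actually needed in your first chain of equivalences---only the identification $\psi^\ast\KGL_\fX\simeq\KGL_\fY$ and Bott periodicity on $\fY$ are used there---though it is genuinely needed in the second part.
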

\begin{proof}
    By~\cite[\href{https://arxiv.org/pdf/2106.15001\#equation.10.7}{Thm.~10.7(ii)}]{kr24} we have, for any virtual vector bundle $\sE$ on $\fX$
    a canonical Bott-periodicity isomorphism
    \[\KGL_{\fX}\twist{\sE} \simeq \KGL_{\fX}\in \SH{\fX},\]
    which implies the first statement and the second is immediate by
    \cref{lem:artin-tate-base change}.
\end{proof}

\begin{lem}[Pullback]\label{lem:comp_pullback}
Let $\phi \colon \fX' \to \fX$ be a morphism of stacks. Then $\phi^*\colon \SH{\fX} \to \SH{\fX^\prime}$ preserves the subcategory of Artin--Tate objects. 
In particular, if $\fZ\in \SmStk{\fX}$ is Artin--Tate, then the pullback
$\phi^\ast \fZ\in \SmStk{\fX^\prime}$ is Artin--Tate.
\end{lem}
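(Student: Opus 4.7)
The plan is to reduce to checking what $\phi^*$ does on generators of the Artin--Tate subcategory, and then invoke base change. Since $\SH{\fX'}$ is stable and $\phi^*$ is an exact symmetric monoidal left adjoint, $\phi^*$ preserves arbitrary thick subcategories through their generators. Thus it suffices to show that for every finite \'etale $\psi\colon \fY\to \fX$ and every virtual vector bundle $\sE\in K_0(\fY)$, the object $\phi^*\psi_\sharp(\SS_\fY\twist{\sE})$ lies in the Artin--Tate subcategory of $\SH{\fX'}$.

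Form the pullback square
\[
\begin{tikzcd}
\fY' \ar[r,"\phi'"]\ar[d,"\psi'"'] & \fY \ar[d,"\psi"] \\
\fX' \ar[r,"\phi"'] & \fX\,,
\end{tikzcd}
\]
and note that $\psi'$ is again finite \'etale, hence in particular smooth and representable. Since $\psi$ is smooth and representable, \Cref{thm: base change formulas} (the $(-)_\sharp$ base change) gives a natural equivalence $\phi^*\psi_\sharp \simeq \psi'_\sharp(\phi')^*$. Moreover, pullback is symmetric monoidal and so commutes with motivic Thom constructions, i.e.~$(\phi')^*\SS_\fY\twist{\sE}\simeq \SS_{\fY'}\twist{(\phi')^*\sE}$. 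Combining the two gives
\[
\phi^*\psi_\sharp(\SS_\fY\twist{\sE}) \simeq \psi'_\sharp\bigl(\SS_{\fY'}\twist{(\phi')^*\sE}\bigr)\,,
\]
which is visibly a generator of the Artin--Tate subcategory of $\SH{\fX'}$. This proves the first claim.

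For the \enquote{in particular}, suppose $\varphi\colon\fZ\to\fX$ is smooth representable and Artin--Tate, so that $\varphi_\sharp\varphi^*\SS_\fX \simeq \SS_\fX[\fZ]$ is Artin--Tate in $\SH{\fX}$. By the first part, $\phi^*(\SS_\fX[\fZ])$ is Artin--Tate in $\SH{\fX'}$. Applying smooth base change once more to the pullback square defining $\phi^*\fZ\to\fX'$ and using $\phi^*\SS_\fX\simeq\SS_{\fX'}$ identifies this object with $\SS_{\fX'}[\phi^*\fZ]$, finishing the proof.

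The only mild obstacle is bookkeeping: we need the chosen base change formula to actually apply (which it does since finite \'etale and smooth representable maps are closed under pullback), and we need compatibility of $\phi^*$ with Thom spectra. Both are formal consequences of \Cref{thm: base change formulas} and the symmetric monoidality of $\phi^*$, so no further geometric input is required.
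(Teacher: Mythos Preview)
Your proof is correct and follows essentially the same route as the paper: reduce to generators by exactness, form the pullback square, apply $\sharp$-base change and compatibility of $\phi^*$ with Thom twists. One small bookkeeping slip: the $\sharp$-base change you need is recorded in \Cref{thm: lower sharp proj formula}, not in \Cref{thm: base change formulas} (which concerns proper $(-)_*$).
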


\begin{proof}
Let $\psi \colon \fY \to \fX$ be a finite \'{e}tale morphism $\sE$ be a virtual vector
bundle on $\fY$ and consider the pullback diagram
\[
    \begin{tikzcd}[sep=huge]
        \fY^\prime \ar[r,"{\phi'}"] \ar[d,"{\psi'}"]  & \fY\ar[d,"\psi"] \\
        \fX^\prime \ar[r,"\phi"]  & \fX.
    \end{tikzcd}
\]
Then, by $\sharp$-base change (\Cref{thm: lower sharp proj formula}
), we have equivalences
\[
\phi^*\psi_\sharp (\one_{\fY} \twist{\sE}) \simeq \psi'_\sharp {\phi'}^*( \one_{\fY}\twist{\sE}) \simeq \psi'_\sharp(\one_{\fY'} \twist{{\phi'}^* \sE}) 
\]
Since the right hand term is Artin--Tate by definition, the claim follows by virtue
of $\phi^\ast$ being exact and the subcategory of Artin--Tate motives being thick.
\end{proof}

\begin{lem}[Pushforward]\label{lem:comp_push}
Let $\phi \colon \fX' \to \fX$ be a finite \'{e}tale morphism of stacks. Then $\phi_\sharp \colon \SH{\fX^\prime} \to \SH{\fX}$ preserves the subcategory of Artin--Tate objects. 
\end{lem}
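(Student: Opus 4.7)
The proof plan is short: by \Cref{def:comp} the Artin--Tate subcategory of $\SH{\fX'}$ is the thick subcategory generated by objects of the form $\psi_\sharp(\psi^\ast \one_{\fX'}\twist{\sE})$, where $\psi\colon \fY\to \fX'$ ranges over finite \'etale morphisms and $\sE$ over virtual vector bundles on $\fY$. Since $\phi_\sharp$ is a left adjoint between stable categories, it is exact and preserves retracts, so it carries thick subcategories to thick subcategories. It therefore suffices to show that $\phi_\sharp$ sends each such generator to an Artin--Tate object in $\SH{\fX}$.

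For this, observe that finite \'etale morphisms are stable under composition, so the composite $\gamma\coloneq \phi\circ\psi\colon \fY\to \fX$ is again finite \'etale. Moreover, by functoriality of $(-)_\sharp$ we have $\phi_\sharp \psi_\sharp \simeq \gamma_\sharp$, and pullback of the unit is the unit, so $\psi^\ast \one_{\fX'} \simeq \gamma^\ast \one_{\fX}$. Combining these two observations yields a natural equivalence
\[
    \phi_\sharp\bigl(\psi_\sharp(\psi^\ast \one_{\fX'}\twist{\sE})\bigr)
    \simeq \gamma_\sharp(\gamma^\ast \one_{\fX}\twist{\sE}) \in \SH{\fX},
\]
and the right-hand side is precisely one of the generators of the Artin--Tate subcategory of $\SH{\fX}$. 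Hence $\phi_\sharp$ sends generators to generators and therefore preserves Artin--Tate objects, as claimed. There is no real obstacle here beyond unwinding the definitions; the entire content is the elementary fact that finite \'etale morphisms compose and that $(-)_\sharp$ is functorial in such compositions.
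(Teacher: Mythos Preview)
Your proof is correct and is exactly the argument the paper has in mind: the paper's own proof is the single sentence ``This follows immediately from the definitions, using that finite \'etale morphisms are closed under composition,'' and you have simply unpacked that sentence. There is nothing to add.
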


\begin{proof}
This follows immediately from the definitions, using that finite \'{e}tale morphisms are closed under composition.
\end{proof}

Unfortunately, being Artin--Tate is not closed under gluing of smooth $\fX$-stacks, but 
we have the following partial result which will suffice for our needs. 

\begin{lem}[Dévissage]\label{lem:comp_two_out_of_three}
    Suppose we have a diagram of smooth stacks
    \[ \begin{tikzcd}[sep=huge]
        \fZ \ar[r,"i"] \ar[d,swap,"\alpha"] & \fX \ar[d,"{\phi}"]  & \fU \ar[l,swap,"j"] \\ 
        \fY^\prime \ar[r,"{\beta}"] & \fY 
    \end{tikzcd} \]
    in which $\phi$ and $\alpha$ are smooth, $\beta$ is finite \'etale and $i$ is a closed
    immersion with complement $j$. Assume that the class of the normal bundle
    $[\sN_{\fZ/\fX}]$ is in the image of the map $\alpha^*\colon K_0(\fY') \to K_0(\fZ)$. Moreover, assume that $\alpha$ is Artin--Tate.
    Then, $\phi \circ j$ is Artin--Tate if and only if $\phi$ is Artin--Tate.
\end{lem}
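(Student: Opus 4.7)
The plan is to apply the motivic Gysin/purity cofiber sequence associated to the smooth closed pair $(\fZ \hookrightarrow \fX)$ over $\fY$, and then show that the resulting ``error term'' is Artin--Tate in $\SH{\fY}$. Once this is done, two-out-of-three in the thick subcategory of Artin--Tate motives finishes the lemma.

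First, I would observe that both $\phi$ and $\phi \circ i = \beta \circ \alpha$ are smooth (the latter as a composite of the smooth $\alpha$ with the finite étale $\beta$), so that $\fX$ and $\fZ$ are both smooth over $\fY$. Applying $\phi_\sharp$ to the localization cofiber sequence $j_\sharp \one_\fU \to \one_\fX \to i_* \one_\fZ$ in $\SH{\fX}$ and invoking motivic purity to rewrite $\phi_\sharp i_* \one_\fZ$, one obtains a cofiber sequence
\[
(\phi j)_\sharp \one_\fU \longrightarrow \phi_\sharp \one_\fX \longrightarrow (\phi i)_\sharp \bigl(\one_\fZ \twist{\sN_{\fZ/\fX}}\bigr)
\]
in $\SH{\fY}$. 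It therefore suffices to prove that the rightmost term is Artin--Tate.

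Next, I would unpack this term using the normal bundle hypothesis. By assumption there exists $[\mathcal F] \in K_0(\fY')$ with $\alpha^*[\mathcal F] = [\sN_{\fZ/\fX}]$. Since Thom spectra commute with pullback, $\one_\fZ \twist{\sN_{\fZ/\fX}} \simeq \alpha^*(\one_{\fY'}\twist{\mathcal F})$, and the projection formula of \Cref{thm: lower sharp proj formula} applied to $\alpha$ yields
\[
\alpha_\sharp\bigl(\one_\fZ \twist{\sN_{\fZ/\fX}}\bigr) \simeq (\alpha_\sharp \one_\fZ) \otimes \one_{\fY'}\twist{\mathcal F}.
\]
By hypothesis $\alpha_\sharp \one_\fZ$ is Artin--Tate in $\SH{\fY'}$, and tensoring with the invertible Thom spectrum $\one_{\fY'}\twist{\mathcal F}$ preserves the Artin--Tate subcategory: for a generator $\psi_\sharp(\psi^*\one_{\fY'}\twist{\sE})$ with $\psi\colon \fV \to \fY'$ finite étale, a second application of the projection formula rewrites the tensor product as $\psi_\sharp\bigl(\psi^*\one_{\fY'}\twist{\sE + \psi^*\mathcal F}\bigr)$, which is again of the generating form. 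Applying $\beta_\sharp$ and invoking \Cref{lem:comp_push}, we conclude that
\[
(\phi i)_\sharp\bigl(\one_\fZ\twist{\sN_{\fZ/\fX}}\bigr)\simeq \beta_\sharp\bigl((\alpha_\sharp \one_\fZ) \twist{\mathcal F}\bigr)
\]
is Artin--Tate in $\SH{\fY}$, completing the argument.

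The only non-formal input is the availability of the purity/Gysin triangle for smooth closed pairs of stacks within the Khan--Ravi six-functor formalism; after that, everything reduces to bookkeeping with the projection formula and the stability properties already established in \Cref{lem:artin-tate-base change}--\Cref{lem:comp_push}. The precise form of the twist (by $\sN_{\fZ/\fX}$, its dual, or with a sign shift) is immaterial, since the image of $\alpha^*\colon K_0(\fY') \to K_0(\fZ)$ is closed under negation and duality.
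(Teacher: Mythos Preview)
Your proposal is correct and follows essentially the same route as the paper: apply $\phi_\sharp$ to the localization cofiber sequence, use purity to identify $\phi_\sharp i_*\one_\fZ$ as $(\beta\alpha)_\sharp$ of a Thom twist, pull the twist outside $\alpha_\sharp$ via the normal-bundle hypothesis and the projection formula, and finish with \Cref{lem:comp_push}. The paper's twist appears as $\sN^\vee$ rather than $\sN$, but you already noted this discrepancy is harmless.
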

\begin{proof}
By \cite[\href{https://arxiv.org/pdf/2106.15001\#equation.5.6}{Remark 5.6}]{kr24} we have a cofiber sequence 
  \[
  j_\sharp \one_{\fU} \too \one_{\fX}  \too i_*\one_{\fZ}. 
  \]
  Applying $\phi_\sharp$ we thus get a cofiber sequence 
  \[
  (\phi\circ j)_\sharp \one_\fU \too \phi_\sharp \one_\fX \too \phi_\sharp i_* \one_\fZ. 
  \]
  As the Artin--Tate subcategory is 
  closed under extensions and fibers, it is enough to show that the right hand term
  $\phi_\sharp i_* \one_\fZ$ is also Artin--Tate.
  Let $\sE \in K(\fY')$ be a preimage of $\sN_{\fX/\fZ}$.
  By \cite[\href{https://arxiv.org/pdf/2106.15001\#equation.6.6}{Theorem 6.6}]{kr24}, we have 
  \[
  \phi_\sharp i_*\one_\fZ \simeq (\phi\circ i)_\sharp (\one_\fZ \twist{\sN_{\fX/\fZ}}^{\vee}) \simeq \beta_\sharp \alpha_\sharp (\one_\fZ \twist{\sN_{\fX/\fZ}^{\vee}}) \simeq \beta_\sharp((\alpha_\sharp \one_\fZ) \twist{\sE^{\vee}}).
  \]
  Since $\alpha_\sharp \one_{\fZ} \in \SH{\fY'}$ is Artin--Tate, the Thom-spectrum
  $\alpha_\sharp \one_{\fZ} \twist{\sE^{\vee}}$ is as well.
  By \Cref{lem:comp_push} and the assumption that $\beta$ is finite \'{e}tale, we deduce
  that $\beta_\sharp(\alpha_\sharp \one_{\fZ} \twist{\sE^{\vee}}) \in \SH{\fY}$ is Artin--Tate
  and the result follows.
\end{proof}

\subsubsection{Eilenberg--Moore and Künneth}

We now deduce formulas for the cohomology of pullback square of stacks where 
one leg is Artin--Tate.

\begin{prop}[Künneth]\label{prop: Kunneth}
Let $\fX$ be a stack, $R\in \CAlg(\SH{\fX})$ and suppose we are given two 
smooth, representable maps $s\colon \fA\to \fX$ and $i\colon \fB \to \fX$. Denote by
$\delta\colon \fZ \to \fX$ their fiber product in $\Sm{\fX}$. The K\"unneth map 
   \[\ez_*\colon s_\ast s^\ast R \otimes_R i_\ast i^\ast R
   \too \delta_\ast \delta^\ast R ,\]
   is an equivalence if $s$ is Artin--Tate.
\end{prop}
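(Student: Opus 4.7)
The plan is to identify the Künneth map with the canonical comparison between internal mapping objects and then invoke dualizability of Artin--Tate motives. Write $p_\fA\colon \fZ \to \fA$ and $p_\fB\colon \fZ \to \fB$ for the two projections from the fiber product, so that $\delta = s\circ p_\fA = i\circ p_\fB$ is smooth and representable. Working throughout in the $R$-linear category $\Mod{\SH{\fX}}{R}$, the first step is to apply the weak duality discussed immediately after \cref{thm: lower sharp proj formula} to obtain, for each smooth representable $\varphi$ over $\fX$, a natural equivalence $\varphi_\ast \varphi^\ast R \simeq \map_R(\varphi_\sharp \varphi^\ast R,\, R)$. This rewrites both the source and the target of the Künneth map as internal mapping spectra into $R$.

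To compare them, I first identify $\delta_\sharp \delta^\ast R$ with a tensor product. Combining the smooth base change $s^\ast i_\sharp \simeq (p_\fA)_\sharp p_\fB^\ast$ (applied to $i$ smooth via \cref{thm: lower sharp proj formula}) with the $s_\sharp$ projection formula, we compute
\[
s_\sharp s^\ast R \otimes_R i_\sharp i^\ast R \simeq s_\sharp s^\ast(i_\sharp i^\ast R) \simeq s_\sharp (p_\fA)_\sharp p_\fB^\ast i^\ast R \simeq \delta_\sharp \delta^\ast R.
\]
Setting $R_\fA \coloneq s_\sharp s^\ast R$ and $R_\fB \coloneq i_\sharp i^\ast R$, the Künneth map then becomes the canonical comparison
\[
\map_R(R_\fA, R) \otimes_R \map_R(R_\fB, R) \too \map_R(R_\fA \otimes_R R_\fB,\, R),
\]
which is an equivalence whenever one of $R_\fA, R_\fB$ is dualizable in $\Mod{\SH{\fX}}{R}$.

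Finally, the Artin--Tate hypothesis on $s$ says that $s_\sharp \one_\fA$ is Artin--Tate in $\SH{\fX}$, hence dualizable by \cref{prop:Artin-Tate-are-dualizable}; base changing along the unit $\one \to R$ via \cref{lem:artin-tate-base change}, the object $R_\fA \simeq R \otimes s_\sharp \one_\fA$ is Artin--Tate and therefore dualizable in $\Mod{\SH{\fX}}{R}$, completing the proof. The main obstacle will be the coherence check that, under the weak duality identification, the Künneth map of the statement genuinely agrees with the canonical internal-Hom comparison above; however, both maps are adjoint to the same composition of unit/counit maps and projection-formula equivalences, so naturality forces them to coincide.
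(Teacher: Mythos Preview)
Your proof is correct and follows essentially the same approach as the paper: both rewrite $f_\ast f^\ast R$ as $(f_\sharp f^\ast R)^\vee$ via weak duality, identify $\delta_\sharp\delta^\ast R \simeq s_\sharp s^\ast R \otimes_R i_\sharp i^\ast R$, and reduce the Künneth map to the canonical comparison $A^\vee \otimes_R B^\vee \to (A\otimes_R B)^\vee$, which is an equivalence because $s_\sharp s^\ast R$ is dualizable by the Artin--Tate hypothesis. Your explicit derivation of the tensor decomposition via smooth base change and the projection formula is exactly what the paper summarizes as ``by definition of the monoidal structure''.
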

\begin{proof}
Recall from the discussion of \cref{thm: lower sharp proj formula} that, for any
smooth representable map of stacks $f$, the functors $f_{\sharp}$ and $f_{\ast}$ are 
weakly dual to each other, that is we have
$(f_{\sharp}f^{\ast})^{\vee}\simeq f_{\ast}f^{\ast}$. Under
this identification, we may factor the map $\ez_{\ast}$ as the composite
\[ s_\ast s^\ast R \otimes_R i_\ast i^\ast R 
\simeq (s_\sharp s^\ast R)^{\vee} \otimes_R (i_\sharp i^\ast R)^{\vee}
\to (s_\sharp s^\ast R \otimes_R i_\sharp i^\ast R)^{\vee} 
\xrightarrow{\ez_{\sharp}^{\vee}} (\delta_{\sharp}\delta^{\ast}R)^{\vee}
\simeq \delta_{\ast}\delta^{\ast}R\]
where $\ez_{\sharp}$ is the natural map 
   \[\ez_\sharp \colon \delta_\sharp \delta^*R 
   \xrightarrow{\sim} s_\sharp s^*R\otimes_R i_\sharp i^*R,\]
which is an equivalence by the definition of the monoidal structure on 
$\Mod{\SH{\fX}}{R}$. Thus, $\ez_\ast$ is an equivalence if and only if
the map 
\[ (s_\sharp s^\ast R)^{\vee} \otimes_R (i_\sharp i^\ast R)^{\vee}
\to (s_\sharp s^\ast R \otimes_R i_\sharp i^\ast R)^{\vee} \]
is an equivalence. This holds because $s_{\sharp}s^{\ast}R$
is dualizable as an $R$-module by
\cref{prop:Artin-Tate-are-dualizable}.
\end{proof}

\begin{construction}
Let $p\colon \stack \to \base$ be a stack and $R\in \CAlg(\SH{\stack})$.
    For modules $M,N \in \Mod{\SH{\stack}}{R}$, the lax monoidality of $p_{\ast}$
    provides us with a comparison map of the relative tensor products
    \begin{equation}\label{eq: EM comparision map}
  \mdef{\tau_{M,N}}: p_\ast M \otimes_{p_\ast R} p_\ast N
  \too p_\ast \left( M \otimes_{R} N \right) \in \Mod{\SH{\base}}{p_\ast R} \,. 
    \end{equation}
  For fixed $N$, we denote by 
  \[\mdef{\cC(N,p)}\subseteq \Mod{\SH{\fX}}{R}\] 
  the full subcategory spanned by those modules $M$ for which $\tau_{M,N}$ is an equivalence.
\end{construction}

Note that, clearly we have $R\in \cC(N,p)$. Moreover, the category $\cC(N,p)$ is thick, 
as $p_\ast$ is exact. Moreover, the following proposition shows that transfers of $R$ 
along proper maps pulled back from the base, also lie in $\cC(N,p)$.

\begin{prop}\label{prop: em proper transfer}
    Let $R \in \CAlg(\SH{\stack})$ and $N \in \Mod{\SH{\stack}}{R}$.
    For a proper map of schemes $f \colon S^\prime \to S$, consider the pullback diagram
    \[
    \begin{tikzcd}[sep=huge]
        \stack^\prime \ar[d, "g"] \ar[r, "q"]  & \base^\prime \ar[d, "f"]\\
        \stack \ar[r, "p"] & \base .
    \end{tikzcd} 
    \]
     Suppose we have $M \in \Mod{\SH{\stack^\prime}}{g^\ast R}$, such that
     $M \in \cC(g^\ast N,q)$, then also $g_\ast M \in \cC(N,p)$. In particular, we have that $g_\ast g^\ast M \in \cC(N,p)$.
\end{prop}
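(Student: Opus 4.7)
I will show that the K\"unneth map $\tau^p_{g_\ast M, N}\colon p_\ast g_\ast M \otimes_{p_\ast R} p_\ast N \to p_\ast(g_\ast M \otimes_R N)$ is an equivalence by chaining together the two proper projection formulas (for $g$ and for $f$), the hypothesis on $M$, and proper base change. A key preliminary observation is that since $f$ is proper, so is its base change $g$, so both $f_\ast$ and $g_\ast$ are $\SH{}$-linear, commute with colimits, and satisfy the proper projection formula (\Cref{thm: proper proj formula}). In addition, properness of $f$ gives the Beck--Chevalley equivalence $p^\ast f_\ast \simeq g_\ast q^\ast$ from \Cref{thm: base change formulas}.

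\textbf{Rewriting the target.} First apply the proper projection formula for $g$ to get $g_\ast M \otimes_R N \simeq g_\ast(M \otimes_{g^\ast R} g^\ast N)$. Using $p_\ast g_\ast = f_\ast q_\ast$ and the hypothesis $M\in \cC(g^\ast N, q)$, the target becomes
\[
p_\ast(g_\ast M \otimes_R N) \simeq f_\ast q_\ast(M \otimes_{g^\ast R} g^\ast N) \simeq f_\ast\bigl(q_\ast M \otimes_{q_\ast g^\ast R} q_\ast g^\ast N\bigr).
\]

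\textbf{Rewriting the source.} The Beck--Chevalley transformation supplies a natural map $f^\ast p_\ast R \to q_\ast g^\ast R$, through which $q_\ast M$ becomes an $f^\ast p_\ast R$-module. Applying the proper projection formula for $f$ in the relative $p_\ast R$-linear setting, the source becomes
\[
p_\ast g_\ast M \otimes_{p_\ast R} p_\ast N = f_\ast q_\ast M \otimes_{p_\ast R} p_\ast N \simeq f_\ast\bigl(q_\ast M \otimes_{f^\ast p_\ast R} f^\ast p_\ast N\bigr).
\]

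\textbf{Comparison.} Under these identifications, $\tau^p_{g_\ast M, N}$ becomes $f_\ast$ of the natural map
\[
q_\ast M \otimes_{f^\ast p_\ast R} f^\ast p_\ast N \longrightarrow q_\ast M \otimes_{q_\ast g^\ast R} q_\ast g^\ast N
\]
induced by the Beck--Chevalley transformations on both $R$ and $N$. Using that the $f^\ast p_\ast R$-action on $q_\ast M$ factors through its $q_\ast g^\ast R$-action, we rewrite the left hand side as $q_\ast M \otimes_{q_\ast g^\ast R}\bigl(q_\ast g^\ast R \otimes_{f^\ast p_\ast R} f^\ast p_\ast N\bigr)$, so the comparison map reduces to showing that the canonical map $q_\ast g^\ast R \otimes_{f^\ast p_\ast R} f^\ast p_\ast N \to q_\ast g^\ast N$ is an equivalence in $\Mod{\SH{\base^\prime}}{q_\ast g^\ast R}$. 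This last equivalence is a module-level consequence of proper base change for $f$: after extension of scalars along $f^\ast p_\ast R \to q_\ast g^\ast R$, the Beck--Chevalley map for $N$ becomes an equivalence.

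\textbf{Main obstacle.} The nontrivial point is this last step -- verifying that the module-theoretic comparison is genuinely an equivalence. I expect this to follow cleanly from the proper base change equivalence $p^\ast f_\ast \simeq g_\ast q^\ast$, passed to its module-theoretic shadow via adjunction, but making the bookkeeping precise (especially matching it with $\tau^p_{g_\ast M, N}$ under the preceding identifications) is the delicate part. Once it is in place, the final statement ``$g_\ast g^\ast M \in \cC(N, p)$'' follows by taking $M$ to be $g^\ast M$, since $g^\ast M \in \cC(g^\ast N, q)$ via pullback and the fact that $g^\ast$ is strong symmetric monoidal in the $R$-linear setting.
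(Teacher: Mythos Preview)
Your strategy is the same as the paper's: apply the projection formula for $g$ to the target, use the hypothesis $M\in\cC(g^\ast N,q)$ after pushing along $f_\ast$, and apply the projection formula for $f$ to the source. The paper dispatches your ``main obstacle'' more directly than you do: rather than reducing to a module-level comparison, it simply invokes the base-change equivalence $q_\ast g^\ast \simeq f^\ast p_\ast$, so that $q_\ast g^\ast R\simeq f^\ast p_\ast R$ and $q_\ast g^\ast N\simeq f^\ast p_\ast N$, and then the projection formula for $f$ finishes immediately (the paper then leaves to the reader the check that the resulting composite is $\tau_{g_\ast M,N}$).

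One caution: the equivalence $q_\ast g^\ast \simeq f^\ast p_\ast$ is \emph{not} the proper base change $p^\ast f_\ast \simeq g_\ast q^\ast$ of \Cref{thm: base change formulas}, and it does not follow from the latter by passing to adjoints or ``module-theoretic shadows'' as you suggest. It is rather the smooth base change (coming from the $f_\sharp\dashv f^\ast$ adjunction), which needs $f$ smooth. In the paper's only application $f$ is finite \'etale, so both base changes hold and the point is moot; but your expectation that the needed step follows from $p^\ast f_\ast\simeq g_\ast q^\ast$ alone is not justified.

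Finally, your argument for the ``in particular'' is not right: strong monoidality of $g^\ast$ says nothing about whether $g^\ast M\in\cC(g^\ast N,q)$, which is a condition on the lax monoidal structure of $q_\ast$. The intended case (and the one used in the sequel) is $M=g^\ast R$, for which membership in $\cC(g^\ast N,q)$ is automatic since $\tau^q_{g^\ast R,-}$ is the identity.
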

\begin{proof}
    By assumption, we know that the natural map
    \begin{equation}\label{eq: proper em comp}
     f_\ast \left( q_\ast M \otimes_{q_\ast g^\ast R} q_\ast g^\ast N \right)
    \xrightarrow{\sim} f_\ast q_\ast \left( M \otimes_{g^\ast R} g^\ast N \right) 
    \end{equation}
    is an equivalence.
    Using the proper base change formula $q_\ast g^\ast \simeq f^\ast p_\ast$ and 
    then the proper projection formula for $f^\ast \dashv f_\ast$, we obtain a 
    natural equivalence between the left hand side and
    \[ f_\ast q_\ast M \otimes_{p_\ast R} p_\ast N 
    \simeq p_\ast g_\ast M \otimes_{p_\ast R} p_\ast N \,. \] 
    Moreover, using that $f_\ast q_\ast \simeq p_\ast g_\ast$ 
    and applying the proper projection formula for $g^\ast \dashv g_\ast$,
    we see that the right hand side is naturally equivalent to
    \[ p_\ast \left( g_\ast M \otimes_R N \right) \,. \]
    Under these identifications~\eqref{eq: proper em comp} thus becomes
    an equivalence
    \[ p_\ast g_\ast M \otimes_{p_\ast R} p_\ast N
    \xrightarrow{\sim} p_\ast \left( g_\ast M \otimes_R N \right),\]
    and we leave it to the reader to verify that this is indeed the natural map
    $\tau_{g_{\ast}M,N}$.
\end{proof}

We say that a stack $p\colon \stack\to \base$ is \tdef{\'etale simply connected}
if the map $p$ induces an equivalence on \'etale fundamental groups, i.e.~if every
finite \'etale cover $\stack$ is pulled back from $\base$.

\begin{cor}[Abstract Eilenberg--Moore]\label{EM-for-R-compactifiable}
    Let $p\colon \fX \to \base$ be an \'etale simply connected stack and let $R\in \CAlg(\SH{\fX})$.
     Then, if $A\in \Mod{\SH{\stack}}{R}$ is Artin, the natural transformation
     \[ p_\ast A \otimes_{p_\ast R} p_\ast(-)
  \too p_\ast \left( A \otimes_{R} - \right)\]
  is an equivalence.
\end{cor}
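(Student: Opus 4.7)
The plan is to fix an arbitrary $N\in\Mod{\SH{\fX}}{R}$ and show that every Artin module $A$ lies in $\cC(N,p)$. Since $p_\ast$ is exact, $\cC(N,p)$ is automatically a thick subcategory of $\Mod{\SH{\fX}}{R}$, and it contains the unit $R$ because $\tau_{R,N}$ reduces tautologically to the identity of $p_\ast N$. By the very definition of Artin motives it will then suffice to check that each generator $\psi_\sharp\psi^\ast R$, with $\psi\colon\fY\to\fX$ finite étale, belongs to $\cC(N,p)$.

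To bring such a generator within reach of the proper base change machinery, I would first invoke the étale simply connected hypothesis on $p$ to realize $\psi$ as the base change of some finite étale map $f\colon\base'\to\base$ along $p$. This produces a pullback square
\[
\begin{tikzcd}
\fY \ar[d, "\psi"'] \ar[r, "q"] & \base' \ar[d, "f"] \\
\fX \ar[r, "p"] & \base
\end{tikzcd}
\]
which, since finite étale maps are proper, matches the setup of \Cref{prop: em proper transfer} with $g=\psi$. Moreover, as $\psi$ is finite étale, \Cref{lem: fet semi-add} supplies an equivalence $\psi_\sharp\simeq\psi_\ast=g_\ast$, so the object whose membership in $\cC(N,p)$ we need to verify rewrites as $g_\ast g^\ast R$.

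The final step will be to apply \Cref{prop: em proper transfer} to the module $M=g^\ast R\in\Mod{\SH{\fY}}{g^\ast R}$. Its hypothesis $M\in\cC(g^\ast N,q)$ is trivially satisfied because the comparison map $\tau_{g^\ast R,g^\ast N}$ unfolds to the identity of $q_\ast g^\ast N$; the conclusion is then precisely that $g_\ast g^\ast R\simeq\psi_\sharp\psi^\ast R$ lies in $\cC(N,p)$, which closes the argument by thickness.

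The crux of the proof is conceptual rather than computational: the main obstacle is recognising that the étale simply connected hypothesis on $p$ is exactly what one needs in order to push every finite étale cover of $\fX$ down to the base, thereby feeding it through the proper base change mechanism encoded in \Cref{prop: em proper transfer}. Without this assumption the argument would break at the very first generator, as finite étale covers of $\fX$ that do not descend to $\base$ would leave us no handle for the relative tensor product on the left hand side.
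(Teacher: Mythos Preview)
Your proposal is correct and follows essentially the same route as the paper's proof: reduce to the Artin generators $\psi_\sharp\psi^\ast R$ by thickness of $\cC(N,p)$, use the \'etale simply connected hypothesis to write $\psi$ as the pullback of a finite \'etale map over $\base$, and then invoke \Cref{prop: em proper transfer}. Your account is in fact more detailed than the paper's, which compresses the verification that $g^\ast R\in\cC(g^\ast N,q)$ and the identification $\psi_\sharp\simeq\psi_\ast$ into the phrase ``the claim follows from \Cref{prop: em proper transfer}.''
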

\begin{proof} 
It suffices to show that, for any $M\in \Mod{\SH{\fX}}{R}$ we have $A\in \cC(M,p)$.
Since $\cC(M,p)$ is thick, and $A$ is Artin, it suffices to prove this for 
$A=g_{\sharp}g^{\ast}R$ where $g\colon \stack^{\prime}\to \stack$ is any finite \'etale map.
By assumption on $\stack$, $g$ is pulled back from $\base$, so the claim
follows from \cref{prop: em proper transfer}.
\end{proof}

\subsection{Eilenberg--Moore for algebraic tori}\label{ssec: EM for tori}

We finally come to the proof of \cref{algEM} and begin with showing that the
zero section $\base\to \deloop{\wt}$ of a Weil restricted torus $\wt$ is Artin--Tate. The cell decomposition appearing in the proof was already used by Merkurjev and Panin in their computation of $K_0(\t)$ of an algebraic torus $\t$ \cite{mp97}.

Let $\baf$ be a field and let $G$ be its absolute Galois group.
For a finite set $I$ with an action on $G$ and a variety $X$ over $\baf$, let $X^I$ be
the Weil restriction of $X$ along the map $I\to \pt$, regarded as morphism of finite $\baf$-varieties. 
In other words,
\[
  X^I\times_{\baf}{\cl{\baf}} = (X\times_\baf \cl{\baf})^I
\]
with the action of the Galois group given by
\[
  \gamma(x)_i = \gamma(x_{\gamma^{-1}(i)}) \quad \forall x = (x_i)_{i\in I} \in (X\times_\baf \cl{\baf})^I.
\]

\begin{prop}\label{prop:point_to_BG_comp}
Let $\baf$ be a field with Galois group $G$ and let $I$ be a finite $G$-set. 
Then, the motive of the canonical smooth section
$e\colon \Spec{\baf} \to  \deloop{\GG_m^I}$ is Artin--Tate in $\SH{\deloop{\Gm^I}}$.
\end{prop}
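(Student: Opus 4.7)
\medskip

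\noindent\textbf{Plan.} I will argue by induction on $n = |I|$, with the trivial case $n=0$ where $\Gm^I = \pt$ and $e$ is the identity. For the inductive step, I first reduce to the case that $I$ is a single transitive $G$-orbit: if $I = J \sqcup J'$ as $G$-sets with both pieces nonempty, then $\Gm^I = \Gm^J \times \Gm^{J'}$, $\deloop{\Gm^I} = \deloop{\Gm^J} \times \deloop{\Gm^{J'}}$, and $e$ is the external product $e_J \boxtimes e_{J'}$. Since the Artin--Tate subcategory is closed under external tensor products (via the projections and \cref{lem:artin-tate-base change}), the inductive hypothesis closes this case.

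\medskip

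\noindent For the single-orbit case $I = G/H$, set $E = \cl{\baf}^H$ so that $\wt := \Gm^I = \Weilres{E}{\baf}{\Gm_E}$, and consider the smooth projective $\wt$-equivariant compactification $\wt \hookrightarrow X := (\PP^1)^I = \Weilres{E}{\baf}{\PP^1_E}$. The plan is to obtain the result by iterated dévissage using \cref{lem:comp_two_out_of_three}. Using the cellular decomposition $\PP^1 = \{\infty\} \sqcup \mathbb{A}^1$ componentwise, the $\baf$-cells of $X$ are indexed by $G$-orbits of functions $f\colon I \to \{\infty, \mathbb{A}^1\}$; for such a $G$-orbit with representative $A = f^{-1}(\mathbb{A}^1)$ and stabilizer $H_A \leq G$, the corresponding cell is
\[
    S_{[A]} = \Weilres{\cl{\baf}^{H_A}}{\baf}{(\mathbb{A}^1)^A},
\]
which is a $\wt$-equivariant vector bundle over the finite étale scheme $\Spec(\cl{\baf}^{H_A})$. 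Its motive in $\SH{\deloop{\wt}}$ is thus the pushforward along a finite étale cover of the Thom spectrum of a $\wt$-equivariant vector bundle, hence Artin--Tate by definition.

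\medskip

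\noindent Applying \cref{lem:comp_two_out_of_three} iteratively along the filtration of $X$ by closed unions $Z_k$ of closures of cells of dimension $\leq k$, and noting that each step adds a smooth closed cell whose motive has just been shown to be Artin--Tate, yields that $X$ itself is Artin--Tate over $\deloop{\wt}$. A further dévissage step for the open immersion $\mathbb{A}^I \hookrightarrow X$ with closed complement $D_\infty$ (itself built from smaller cells by the same argument, hence Artin--Tate) gives $\mathbb{A}^I$ Artin--Tate. We then repeat the procedure within $\mathbb{A}^I$ using the decomposition $\mathbb{A}^1 = \{0\} \sqcup \Gm$ componentwise; the top-dimensional open cell here is precisely $\wt$, and the lower-dimensional strata are handled either by the above Thom argument (where applicable) or, for the torus-strata $\Gm^{I \setminus A}$, by the inductive hypothesis combined with the identification $[\Gm^{I \setminus A} / \wt] \simeq \deloop{\Gm^{A}}$ and pushforward along $\deloop{\Gm^A} \to \deloop{\wt}$.

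\medskip

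\noindent The main technical obstacle is the verification of the normal bundle hypothesis in each application of \cref{lem:comp_two_out_of_three}: for a closed immersion of cells $S_{[A']} \hookrightarrow \overline{S_{[A]}}$ with $A' \subsetneq A$, the normal bundle decomposes over $\cl{\baf}$ as $\bigoplus_{i \in A \setminus A'} \cL_i$, where $\cL_i$ is the line bundle attached to the $i$-th character of $\wt$. Since $H_{A'}$ preserves $A \setminus A'$ set-wise, the sum descends to a $\wt$-equivariant vector bundle on $\Spec(\cl{\baf}^{H_{A'}})$, realizing the normal bundle as a pullback along the finite étale cover $\Spec(\cl{\baf}^{H_{A'}}) \times_\baf \deloop{\wt} \to \deloop{\wt}$ as required. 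The second delicate point is the identification of the torus-strata motives over $\deloop{\wt}$ in the second round of dévissage, where some care is needed to bootstrap the inductive hypothesis (for smaller Weil-restricted tori) to the required Artin--Tate statement over the larger base $\deloop{\wt}$.
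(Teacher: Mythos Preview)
Your core idea in the ``second round'' — the dévissage inside $\AA^I$ using the stratification by the locus where at least $k$ coordinates are nonzero — is exactly what the paper does, and it is all that is needed. The detour through $(\PP^1)^I$ is unnecessary: the map $\AA^I//\GG_m^I \to \deloop{\GG_m^I}$ is an affine bundle, so its motive is the unit and hence trivially Artin--Tate. You can start there directly and skip the entire first half of your plan. (The reduction to a single orbit is also superfluous; the paper runs the induction over all fields and all finite $G$-sets simultaneously, and the strata that appear naturally force you into multi-orbit $G$-sets over intermediate fields anyway.)

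More seriously, your proposed filtration of $(\PP^1)^I$ by the closed subvarieties $Z_k$ does not feed into \cref{lem:comp_two_out_of_three}: the unions of cell closures are not smooth, so the normal-bundle hypothesis is not even well-posed. The dévissage must run through smooth ambients, which is why the paper (and your second round) uses the \emph{open} filtration $U_0 \supset U_1 \supset \cdots$ inside $\AA^I$, peeling off one smooth closed stratum at a time. Also, your formula $S_{[A]} = \Weilres{\cl{\baf}^{H_A}}{\baf}{(\AA^1)^A}$ is not right: the cell is a scheme \emph{over} $\cl{\baf}^{H_A}$ (an affine space there), not its Weil restriction back to $\baf$; your next sentence shows you understand this, so this is a notation slip, but it matters for counting dimensions.

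Finally, the inductive bootstrap you describe at the end is misdirected. The stratum $\GG_m^{I\setminus A}//\GG_m^I$ does identify with $\deloop{\GG_m^A}$ over $\baf_A$, but the map $\deloop{\GG_m^A}\to \deloop{\GG_m^I}$ is not finite étale, so pushforward along it does not preserve Artin--Tate. The correct move (and the one the paper makes) is to observe that $\GG_m^{I\setminus A}//\GG_m^I \to \deloop{\GG_m^I}$ is the \emph{pullback} of the zero section $\Spec{\baf_A} \to \deloop{\GG_m^{I\setminus A}}$ along the projection $\deloop{\GG_m^I}\to \deloop{\GG_m^{I\setminus A}}$; since $|I\setminus A|<|I|$, the inductive hypothesis applies to that zero section, and \cref{lem:comp_pullback} finishes. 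With this correction and the $(\PP^1)^I$ material deleted, your second round becomes precisely the paper's proof.
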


\begin{proof}
We will prove the result simultaneously over all fields and by induction on $|I|$.  
For $I = \varnothing$ the map $e$ is the identity and the claim is trivially true. 

Now assume we have proven the claim for all fields $F$ and all finite $G$-sets
of cardinality $<n$ and let $I$ be a finite $G$-set with $|I|=n$.
Consider the $\GG_m^I$-equivariant embedding $\GG_m^I \into (\AA^1_F)^I\simeq \AA^{|I|}_F$. 
For a subset $J\subseteq I$ we let
  \[ U_{J} \subseteq (\AA^1_F)^I \]
  be the $\GG_{m,F}^I$-invariant $\cl{\baf}$-subvariety consisting of tuples
  $(a_i)_{i\in I} \in (\AA^1_F)^I$ for which $a_i \ne 0$ for $i\in J$. Moreover, for
  any $0 \le k \le |I|$, set
  \[
  U_k := \bigcup_{J\subseteq I,|J|=k} U_J \subseteq (\AA^1_F)^I, 
  \]
  that is, $U_k$ is the open subvariety consisting of tuples with at least $k$
  non-zero entries. 

  We will prove by another induction on $k$ that the smooth $B\GG_{m,F}^I$-stack
  $U_k//\GG_{m,F}^I$ is Artin--Tate, which yields our claim for $k=n=|I|$, as we have a canonical
  $\GG_{m,F}^{I}$-equivariant identification $U_{|I|}= \Gm^I$.
  For $k=0$, $U_0 = (\AA^1_F)^I$ is an affine space, so that its motive agrees with the unit. 

  For $k > 0$, consider the $\GG_{m,F}^I$-equivariant embedding $U_{k} \into U_{k-1}$, and denote
  by $Z_k$ the complement.
The group $G$ acts on the set $\binom{I}{k-1}$ of subsets of size $k-1$ of $I$.
Let $J_1,\dots,J_\ell$ be a complete set of representatives for the $G$-orbits on
$\binom{I}{k-1}$ and let $G_t$ be the stabilizer of $J_t$.
Let $\baf_t=\bar \baf^{G_t}$ be the fixed field of $G_t$. 
Then, we have an open and closed subvariety $Z_{J_t}\subseteq Z_k \times_{F} F_t$ consisting of tuples $(a_i)_{i\in I}$ such that $a_i \ne 0 \iff i\in J_t$. It fits into a diagram
\[ \begin{tikzcd} \coprod_{t=1}^l Z_{J_t}//\GG_{m,F_t}^I \ar[r] \ar[d] & U_{k-1}//\GG_{m,F}^I  \ar[d] & U_{k}//\GG_{m,F}^I \ar[l] \\
\coprod_{t=1}^l B\GG_{m,F_t}^I\ar[r] & B\GG_{m,F}^I \end{tikzcd} \] in which the upper row is an smooth open-closed decomposition and the lower horizontal arrow is finite étale.
Using the induction hypothesis for $k$ and \Cref{lem:comp_two_out_of_three}, we see that $U_k//\GG_{m,F}^I \to B\GG_{m,F}^I$ is Artin--Tate if 
the following hold:
\begin{enumerate}
\item The map $ Z_{J_t} //\GG_{m,F_t}^I  \to B\GG_{m,F_t}^I$ is Artin--Tate. 
\item The normal bundle of the map 
$Z_{J_t} // \GG_{m,F_t}^I \to U_{k-1} // \GG_{m,F}^I$ is pulled back from 
$ B\GG_{m,F_t}^I$.  
\end{enumerate}

For $(1)$, note that $Z_{J_t} \cong \GG_{m,F_t}^{J_t} \times (\AA_{F_t})^{|I \setminus J_t|}$ is acted on by
$\GG_{m,F_t}^I = \GG_{m,F_t}^{J_t} \times \GG_{m,F_t}^{I\setminus J_t}$ factor-wise, i.e. $\GG_{m,F_t}^{J_t}$ acts on the first factor freely and $\GG_{m,F_t}^{I \setminus J_t}$ acts on $(\AA_{F_t})^{|I \setminus J_t|}$. By $\AA^1$-invariance, the motive is the same as just the first factor, i.e. the motive of $\GG_{m,F_t}^{J_t} // \GG_{m,F_t} \to B\GG_{m,F_t}^I$
This morphisms of schemes sits in a pullback
\[
\begin{tikzcd}
 \GG_{m,F_t}^{J_t}//\GG_{m,F_t}^I \ar[r] \ar[d]& B\GG_{m,F_t}^I \ar[d] \\
 \Spec{F_t}\ar[r]                 & B\GG_{m,F_t}^{J_t}  
\end{tikzcd}
\]
The lower horizontal map is Artin--Tate by the inductive hypothesis, as $|J_t|=k-1<n=|I|$. 
Therefore, so is the upper horizontal map by \Cref{lem:comp_pullback}. 

For $(2)$, the normal bundle in question is the restriction of scalars of the (equivariant) normal bundle of $Z_{J_t}$ inside $U_{k-1} \times_{F} F_t$. 
Then, since $Z_{J_t}$ is an open subvariety of $\Aff_{F_t}^{J_t}$ which embeds linearly and equivariantly in $\Aff_{F_t}^{I}$, the normal bundle of $Z_{J_t}$ identifies with the pullback of the algebraic representation $F_t^{I\setminus J_t}$ of $\GG_{m,F_t}^{I\setminus J_t}$, along the map $Z_{J_t}//\GG_{m,F_t}^I \to B\GG_{m,F_t}^I \to B\GG_{m,F_t}^{I\setminus J_t}$. This yields the result.
\end{proof}

\begin{cor}\label{cor: torus is artin}
    Let $\baf$ be a field, write $\base = \Spec{\baf}$ and suppose $\wt$ is the Weil restriction
    of a split torus along a finite \'etale extension of $\base$. Then, the section
    $s\colon \base \too \deloop{\wt}$ is Artin over $\KGL_{\deloop{\wt}}$.
\end{cor}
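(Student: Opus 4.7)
The plan is to reduce directly to the previously-established \Cref{prop:point_to_BG_comp}, using that any Weil-restricted split torus is of the form $\Gm^I$ for a suitable finite $G$-set $I$. Concretely, if $W = \Weilres{\base^{\prime}}{\base}{\Gm^n}$ for a finite étale cover $\base^{\prime} \to \base$, then pulling back to the separable closure gives $\base^{\prime}_{\bar{\baf}} = \coprod_{J} \Spec \bar{\baf}$ for the finite $G$-set $J = \Hom_{\baf}(\base^{\prime}, \bar{\baf})$, and hence $W_{\bar{\baf}} \simeq \Gm^{I}$ with $I = J \times \{1,\dots,n\}$ regarded as a finite $G$-set via the action on the first factor. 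Under this identification, $W$ is exactly the algebraic torus denoted $\Gm^I$ in \Cref{prop:point_to_BG_comp}.

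With this identification, the first step is to apply \Cref{prop:point_to_BG_comp} to conclude that the zero section $s \colon \base \to \deloop{W}$ is Artin--Tate in $\SH{\deloop{W}}$; equivalently, the object $s_\sharp \one_{\base} \simeq s_\sharp s^\ast \one_{\deloop{W}}$ is Artin--Tate over the motivic sphere.

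Next, we use the smooth projection formula of \Cref{thm: lower sharp proj formula} to rewrite
\[
s_\sharp s^\ast \KGL_{\deloop{W}} \;\simeq\; s_\sharp (\one_{\base} \otimes s^\ast \KGL_{\deloop{W}}) \;\simeq\; \KGL_{\deloop{W}} \otimes s_\sharp \one_{\base}.
\]
In other words, $s_\sharp s^\ast \KGL_{\deloop{W}}$ is the base change of $s_\sharp \one_{\base}$ along $\one_{\deloop{W}} \to \KGL_{\deloop{W}}$. By \Cref{lem:artin-tate-base change}, base change along a morphism of commutative algebras preserves Artin--Tate objects, so $s_\sharp s^\ast \KGL_{\deloop{W}}$ is Artin--Tate in $\Mod{\SH{\deloop{W}}}{\KGL_{\deloop{W}}}$.

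Finally, \Cref{lem:Artin-Tate-implies-KGL-Artin} asserts that over $\KGL$, the classes of Artin--Tate and Artin motives coincide (this is just Bott periodicity $\KGL \twist{\sE} \simeq \KGL$, which trivializes the Thom-twists appearing in the definition of Artin--Tate). Therefore $s_\sharp s^\ast \KGL_{\deloop{W}}$ is Artin over $\KGL_{\deloop{W}}$, which is exactly the claim. The only nontrivial ingredient is \Cref{prop:point_to_BG_comp}, which has already been proved; everything else is formal manipulation, so there is no real obstacle once the identification $W \simeq \Gm^I$ is made explicit.
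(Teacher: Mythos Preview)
Your proof is correct and follows the same approach as the paper. The paper's proof is simply the two-line observation that \Cref{prop:point_to_BG_comp} gives Artin--Tate, and \Cref{lem:Artin-Tate-implies-KGL-Artin} then gives Artin over $\KGL$; your explicit identification $W \simeq \Gm^I$ just unpacks the hypothesis of \Cref{prop:point_to_BG_comp}, and your intermediate steps via the projection formula and \Cref{lem:artin-tate-base change} are already subsumed by the ``in particular'' clause of \Cref{lem:Artin-Tate-implies-KGL-Artin}, so those steps are redundant but harmless.
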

\begin{proof}
    By \cref{prop:point_to_BG_comp} the map is Artin--Tate and hence Artin 
    over $\KGL_{\deloop{\wt}}$ by \cref{lem:Artin-Tate-implies-KGL-Artin}.
\end{proof}
Finally, we need the following lemma which will be used in order to verify the
condition appearing in \Cref{EM-for-R-compactifiable}.

\begin{lem}\label{lem: pi1 of BT}
    For any torus $\t$ over $S=\Spec{\baf}$, the stack $\deloop{\t}$ is \'etale simply
    connected relative to $S$, namely the structure map $p\colon \deloop{\t} \to S$ induces an isomorphism of the \'{e}tale fundamental groups.
\end{lem}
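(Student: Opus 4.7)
The plan is to unpack the statement, via the Galois correspondence, as saying that pullback
\[ p^{\ast}\colon \mathrm{FinEt}(\base) \too \mathrm{FinEt}(\deloop{\t}) \]
is an equivalence of categories. Compatible geometric base points can be chosen using the unit section $s\colon \base \to \deloop{\t}$, and $\deloop{\t}$ is connected because it admits a section from the connected scheme $\base$, so passing to $\pi_1^{\et}$ is harmless.

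To describe the target of $p^{\ast}$ concretely, I will use descent along the smooth $\t$-torsor $s\colon \base \to \deloop{\t}$: this identifies $\mathrm{FinEt}(\deloop{\t})$ with the category of finite \'etale $\base$-schemes $Y$ equipped with a $\t$-action, and under this identification $p^{\ast}$ corresponds to equipping $Y$ with the trivial action. Full faithfulness of $p^{\ast}$ is then immediate, since a $\t$-equivariant morphism between two covers carrying the trivial action is nothing but a morphism of $\base$-schemes.

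The main work will be showing essential surjectivity, namely that every $\t$-action on a finite \'etale cover $Y \to \base$ is trivial. I plan to rephrase such an action as a morphism of $\base$-group schemes $\varphi\colon \t \to \underline{\mathrm{Aut}}_\base(Y)$ and then invoke the standard fact that $\underline{\mathrm{Aut}}_\base(Y)$ is itself a finite \'etale $\base$-group scheme. Its identity section is therefore open and closed, so the preimage $\varphi^{-1}(e) \subseteq \t$ is an open and closed subgroup scheme containing the identity section of $\t$. Since $\t$ has geometrically connected fibers over $\base$, this forces $\varphi^{-1}(e) = \t$, and hence $\varphi$ is the trivial homomorphism.

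The only delicate point is really the descent identification in the first step, which is a routine fact about smooth atlases of classifying stacks and the fpqc-local nature of the property of being finite \'etale. The genuine content of the lemma is the familiar rigidity statement that there are no nontrivial group scheme morphisms from a torus to a finite \'etale group scheme.
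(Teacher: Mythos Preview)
Your argument is correct and complete. It differs from the paper's proof, which is considerably terser: the paper invokes the homotopy exact sequence
\[
1 \to \pi_1^{\et}(\deloop{\t}_E) \to \pi_1^{\et}(\deloop{\t}) \to \mathrm{Gal}(E/F) \to 1
\]
for a separable closure $E/F$ and then observes that the left-hand term vanishes because $\t$ is connected and the delooping is \'etale. Your route avoids this exact sequence entirely, working instead directly with the Galois correspondence and descent along the atlas $s\colon \base \to \deloop{\t}$ to reduce to the concrete rigidity statement that a torus admits no nontrivial homomorphism to a finite \'etale group scheme. Both proofs ultimately rest on the same geometric input---connectedness of $\t$---but yours is more self-contained and does not require the reader to know the homotopy exact sequence for \'etale fundamental groups of stacks, at the cost of being longer. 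The paper's proof is a two-line appeal to a standard structural fact; yours unpacks that fact in the special case at hand.
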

\begin{proof}
For any choice of separably closure $\baf \to E$ we have a short
exact sequence of \'etale fundamental groups
\[1 \to  \pi_1^{\et}(\deloop{\t}_{E})\to \pi_1^{\et}(\deloop{\t})
\to \mathrm{Gal}(E/\baf) \to 1\,. \]
As $\t$ is a connected group scheme and the delooping is in the \'etale topology,
the left hand term vanishes and the claim follows.
\end{proof}

With all this in hand, the proof of the desired Eilenberg--Moore formula is straightforward.

\begin{proof}[Proof of \Cref{algEM}]
Let $q\colon \deloop{\kt}\to \base $
and $p\colon \deloop{\wt} \to \base$ denote the unique maps to the terminal object. For the pullback diagram 
of stacks over $\base$
\[\begin{tikzcd}
	T & \base \\
	{\deloop{\kt}} & {\deloop{\wt}} \\
	& \base
	\arrow[from=1-1, to=1-2]
	\arrow[from=1-1, to=2-1]
	\arrow["\lrcorner"{anchor=center, pos=0.125}, draw=none, from=1-1, to=2-2]
	\arrow[from=1-2, to=2-2]
	\arrow[from=2-1, to=2-2]
	\arrow["q"', from=2-1, to=3-2]
	\arrow["p", from=2-2, to=3-2]
\end{tikzcd}\]
we want to show that the induced comparison map
\begin{equation}\label{eq: Final EM comparsion map}
       q_{\ast}\KGL_{\deloop{\kt}} \otimes_{p_{\ast}\KGL_{\deloop{\wt}}} \KGL
      \too \KGL^{\t} \in \CAlg_{\KGL}(\SH{\base}).
\end{equation}
is an equivalence. First, note that the left hand side is given by
\[  q_\ast \KGL_{\deloop{\kt}} \otimes_{p_{\ast} \KGL_{\deloop{\wt}}} \KGL
    \simeq p_\ast \KGL_{\deloop{\wt}}^{\deloop{\kt}} 
    \otimes_{p_\ast \KGL_{\deloop{\wt}}} p_\ast \KGL_{\deloop{\wt}}^{\base},\]
and similarly for the right hand side we have
 \[ \KGL^{\t} \simeq p_\ast \KGL_{\deloop{\wt}}^{\t}.\]
Under these identifications, the map~\eqref{eq: Final EM comparsion map}
can be written as a composite 
\begin{equation*}
 p_\ast \KGL_{\deloop{\wt}}^{\deloop{\kt}} \otimes_{p_\ast \KGL_{\deloop{\wt}}}
 p_\ast \KGL_{\deloop{\wt}}^{\base}
    \xrightarrow{\tau} p_\ast \left( \KGL_{\deloop{\wt}}^{\deloop{\kt}}
    \otimes_{\KGL_{\deloop{\wt}}} \KGL_{\deloop{\wt}}^{\base} \right)
    \xrightarrow{\mu}  p_\ast (\KGL^{\t}_{\deloop{\wt}})\in \SH{\base}.
\end{equation*}
As $\wt$ is the Weil restriction of a split torus, we know by 
\cref{cor: torus is artin} that the motive
$\KGL^{\base}_{\deloop{\wt}}\in \Mod{\SH{\deloop{\wt}}}{\KGL_{\deloop{\wt}}}$
is Artin, and so the map $\tau$ is an equivalence by
\cref{EM-for-R-compactifiable} and \cref{lem: pi1 of BT}. 
Moreover, the morphism $\mu$ is precisely the image under $p_\ast$ of the K\"unneth map 
\[\ez_{\ast}\colon \KGL_{\deloop{\wt}}^{\deloop{\kt}} 
\otimes_{\KGL_{\deloop{\wt}}} \KGL_{\deloop{\wt}}^{\base}
\too\KGL^{\t}_{\deloop{\wt}} \in \SH{\deloop{\wt}}\]
and thus an equivalence by \cref{prop: Kunneth}, so we are done.
\end{proof}

\section{K-Theory of tori via the topological mirror}\label{sec: Kthy top mirror}

Let $\t$ be an algebraic torus over a field $\baf$.
We want to leverage \cref{thm: mainthm} to actually give a presentation of $K$-theory spectrum $K(\t)$
in terms of the spectra $K(L)$ where $\baf \to L \to E$ runs over intermediate extensions.
This is done via the motivic--to--equivariant comparison functors which we reviewed
in \cref{ssec: motivic-vs-equivariant}. Let $\GalG$ denote the absolute Galois group of $\baf$.

We start in \cref{ssec: top tori} by showing that any topological torus with an action of $G$ is contained in the subcategory of Borel $\GalG$-spaces and use this to give a topological description of the delooped character lattice $\Sigma \Lambda^\ast(\t)$ in \Cref{ssec: homotopy type of BLambda}

With this in hand, we deduce \Cref{thm: thm A intro} as 
\Cref{cor: K theory as fixed points}, by applying the comparison functor 
$\SH{\baf}\to \Sp_{\GalG}$ to the equivalence of \cref{thm: mainthm} and showing that a
certainly assembly map is an equivalence of $\GalG$-spectra.

In \cref{ssec: G computation}, we record the Atiyah--Hirzebruch spectral
sequence associated to the equivalence of \cref{cor: K theory as fixed points}, 
which gives a tool for actually computing the $K$-\emph{groups} of $\t$. 
As a special case, we recover the formula for $K_0(\t)$ of Merkujev--Panin via the
generic formula for Bredon homology as a coend.
Finally, we illustrate how our formula recovers a computation of the $K$-theory
of algebraic tori of rank $1$ due to Swan.

\subsection{Topological \texorpdfstring{$\GalG$}{G}-tori} \label{ssec: top tori}

The goal of this section is to explain how to associate a topological torus with
$\GalG$-action to a lattice with a $\GalG$-action and to prove that the homotopy
type of this torus is Borel. 

\begin{construction}
Let $\mdef{\TopTor}$ denote the topological category of \tdef{topological tori},
i.e.~the category of commutative, connected, compact Lie groups. Moreover, let 
$\mdef{\Lattices}$ denote the category of \tdef{lattices}, i.e.~the full subcategory
of $\Ab$ spanned by the finitely generated free abelian groups. For a lattice
$\Lambda \in \Lattices$, write $\Lambda_{\RR}\coloneq \Lambda \otimes_{\ZZ}\RR$ 
for the base change considered as a topological space with discrete subspace
$\Lambda \subseteq \Lambda_{\RR}$. Then there is an equivalence of categories 
\[ \Lattices \xrightarrow{\sim} \TopTor \] 
given by sending a lattice $\Lambda$ to the topological torus $\Lambda_\R / \Lambda$ 
and inverse given by taking the first homology group $H_1(-,\mathbb{\Z})$. 
In particular, for any profinite group $\GalG$, we get an induced equivalence of $G$-equivariant categories and denote by
\[ \mdef{\mathbb{T}(-)}\colon \Lattices^{\deloop{G}}
\xrightarrow{\sim} \TopTor^{\deloop{G}} \,. \]
Here we adopt our convention from \Cref{ssec: motivic-vs-equivariant}, defining the category of objects with an action of a pro-finite group as the limit of the categories of its finite quotients.
\end{construction}

\begin{prop} \label{prop:torus-is-Borel}
    Let $\Lambda \in \Lattices^{\deloop{\GalG}}$ be a lattice with $\GalG$-action for $G$ a finite group.
    Then, the associated torus $\mirr{\Lambda}$ is Borel, i.e.~the natural map 
    \[ {\mirr{\Lambda}}^G \longrightarrow {\mirr{\Lambda}}^{hG} \] 
    from the fixed points to the homotopy fixed points is an equivalence.
\end{prop}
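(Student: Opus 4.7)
The plan is to realize $\mirr{\Lambda}$ as a quotient and reduce the Borel property via a five-lemma argument. Since the discrete abelian group $\Lambda$ acts freely, properly discontinuously, and $G$-equivariantly on $\Lambda_{\RR}$ by translation, the quotient map $\Lambda_{\RR} \to \mirr{\Lambda}$ yields a fiber sequence
\[ \Lambda \longrightarrow \Lambda_{\RR} \longrightarrow \mirr{\Lambda} \]
of topological spaces carrying a continuous $G$-action, with $\Lambda$ discrete and $\Lambda_\RR$ equipped with the $G$-action coming from base change along the linear $G$-action on $\Lambda$.

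Next, I would take both strict and homotopy $G$-fixed points of this fiber sequence. Since both functors are limits, they preserve fiber sequences, producing a commuting ladder
\[ \begin{tikzcd}
\Lambda^{G} \ar[r] \ar[d] & (\Lambda_{\RR})^{G} \ar[r] \ar[d] & \mirr{\Lambda}^{G} \ar[d] \\
\Lambda^{hG} \ar[r] & (\Lambda_{\RR})^{hG} \ar[r] & \mirr{\Lambda}^{hG}
\end{tikzcd} \]
whose vertical maps are the natural comparisons from fixed points to homotopy fixed points. By the standard fact that in a map of fiber sequences, any two comparison maps being equivalences forces the third, it suffices to verify that the two leftmost vertical maps are equivalences.

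For $\Lambda$: since $\Lambda$ is a discrete $G$-set and $EG$ is connected, every continuous $G$-equivariant map $EG \to \Lambda$ is constant with value in $\Lambda^{G}$, so $\Lambda^{hG} = \Lambda^{G}$. For $\Lambda_{\RR}$: the space is contractible, so its homotopy fixed points are contractible; meanwhile $(\Lambda_{\RR})^{G}$ is the $\RR$-linear $G$-invariant subspace of $\Lambda_{\RR}$, hence also contractible. The map between contractible spaces is automatically an equivalence.

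The only subtle point I anticipate is ensuring that the covering sequence truly forms a fiber sequence in the appropriate $\infty$-category of $G$-spaces, so that strict and homotopy limits behave as claimed. This, however, is immediate from the explicit presentation of $\mirr{\Lambda}$ as the orbit space of the free and properly discontinuous $G$-equivariant action of $\Lambda$ on $\Lambda_{\RR}$.
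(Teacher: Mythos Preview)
Your argument has a genuine gap at $\pi_0$. The ``standard fact'' you invoke --- that in a map of fiber sequences of spaces, equivalences on the fiber and total space force an equivalence on the base --- is false when the bases are disconnected. The long exact sequence only gives isomorphisms on $\pi_n$ for $n\ge 1$; the map $\pi_0 E\to\pi_0 B$ need not be surjective, so the five-lemma gives you nothing about $\pi_0$ of the base.

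This is exactly what happens here. Take $G=C_2$ acting on $\Lambda=\ZZ$ by sign. Then $\Lambda^{G}=0$ and $(\Lambda_\RR)^{G}=0$, but $\mirr{\Lambda}^{G}=\{0,\tfrac12\}$ has two components. Your top row becomes $\{0\}\to\{0\}\to\{0,\tfrac12\}$, which is only a fiber sequence over the identity component; the other component of $\mirr{\Lambda}^{G}$ is invisible to the sequence. So even though both of your auxiliary comparisons are correct, the ladder cannot detect that $\pi_0\mirr{\Lambda}^{G}\to\pi_0\mirr{\Lambda}^{hG}$ is a bijection --- and that bijection is precisely the nontrivial content of the proposition.

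The paper repairs this by remembering that $0\to\Lambda\to\Lambda_\RR\to\mirr{\Lambda}\to 0$ is a short exact sequence of abelian $G$-groups, so the fixed-point sequence continues into group cohomology. The key extra input is $H^{1}(G;\Lambda_\RR)=0$ (as $\Lambda_\RR$ is a rational $G$-representation and $G$ is finite), which yields $\pi_0\mirr{\Lambda}^{G}\cong H^{1}(G;\Lambda)\cong\pi_0(B\Lambda)^{hG}$. Your fiber-sequence idea is the right skeleton, but it needs this cohomological step to close the $\pi_0$ gap.
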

\begin{proof} 
    Both spaces in question are $1$-truncated, because ${\mirr{\Lambda}}$ is so. 
    Moreover, as ${\mirr{\Lambda}}$ is a group-like commutative monoid, 
    so are both fixed points, and it suffices to show that the map between them
    is an isomorphism on $\pi_0$ and $\pi_1$ at the canonical base point. 
    Consider the exact sequence 
    \[ 0 \to \Lambda \to \Lambda_\RR \to {\mirr{\Lambda}} \to 0.\] 
    Since $H^1(G;\Lambda_\RR) = 0$ we obtain an exact sequence of abelian Lie groups 
    \[ 0\to \Lambda^G \to \Lambda_\RR^G \to {\mirr{\Lambda}}^G 
    \to H^1(G;\Lambda) \to 0, \]
    or equivalently, a short exact sequence
    \[ 0\to \mathbb{T}(\Lambda^G) \to {\mirr{\Lambda}}^G \to H^1(G;\Lambda) \to 0.\]
    The Lie group on the right is a finite discrete group and the Lie group on the
    left is connected. It follows that  
    \[ \pi_0{\mirr{\Lambda}}^G \iso H^1(G;\Lambda)
    \simeq \pi_0(B\Lambda)^{hG} = \pi_0T(\Lambda)^{hG}\]
    while
    \[ \pi_1{\mirr{\Lambda}}^G \simeq \pi_1\mathbb{T}(\Lambda^G)\simeq \Lambda^G
    \simeq \pi_1(B\Lambda)^{hG} = \pi_1{\mirr{\Lambda}}^{hG}.\]
    It is now easy to check that these two equivalences are the ones induced from
    the map ${\mirr{\Lambda}}^G\to {\mirr{\Lambda}}^{hG}$, yielding the result. 
\end{proof}

We can summarize this result as follows: 
 \begin{cor}\label{comparison topological tori}
     For any profinite group $\GalG$, the following diagram commutes:
     \[ \begin{tikzcd}
        \Lattices^{\deloop{G}} \ar[r] \ar[d,"\mathbb{T}(-)"] & \Spcn({\deloop{G}}) \ar[r,"\Sigma"] & \Spcn({\deloop{G}}) \ar[d,"\beta"] \\
       \TopTor^{\deloop{G}} \ar[rr] & & \Spcn(\cO_{\GalG}^{\op})
   \end{tikzcd}\]
    where the unlabeled horizontal morphisms are forgetful functors, only remembering the underlying (Borel) $G$-space together with its multiplicative structure.
    \end{cor}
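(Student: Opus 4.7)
The plan is to exhibit a natural equivalence between the two composites in the diagram, first on underlying Borel $G$-spaces and then after passing to the genuine refinement via $\beta$. Since the categories of (Borel and genuine) $G$-objects for a profinite $G$ are defined as limits over finite quotients, and all functors in the diagram are compatible with these limits, I will immediately reduce to the case of a finite group $G$.

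For finite $G$, the key input is the natural short exact sequence of abelian Lie groups with $G$-action
\[ 0 \too \Lambda \too \Lambda_{\RR} \too \mirr{\Lambda} \too 0. \]
Here $\Lambda_\RR$ is, for each finite $G$, an $\RR[G]$-module whose underlying space is a $G$-equivariantly contractible real vector space; it is therefore equivariantly nullhomotopic when viewed as a Borel $G$-space. Taking the forgetful image of the sequence in $\Spcn({\deloop{G}})$ and deducing a fiber sequence shows that the underlying Borel $G$-space of $\mirr{\Lambda}$ is naturally identified with $\Sigma \Lambda$. This naturality identifies the two composites in the diagram already after post-composing with the forgetful map $\Spcn(\cO_G^{\op}) \to \Spcn({\deloop{G}})$ induced by the inclusion of Borel $G$-spaces, i.e.\ it equates the underlying Borel $G$-spectra of both sides.

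It remains to check that the resulting identification respects the \emph{genuine} refinement. The genuine refinement produced by the top composite is by definition $\beta(\Sigma\Lambda)$, whose value at an orbit $G/H$ is the homotopy fixed points $(\Sigma\Lambda)^{hH} \simeq \mirr{\Lambda}^{hH}$. On the other hand, the genuine $G$-space underlying $\mirr{\Lambda}$ evaluates on $G/H$ to the honest fixed points $\mirr{\Lambda}^H$. The comparison map between these two refinements is, by construction, the canonical map from fixed points to homotopy fixed points
\[ \mirr{\Lambda}^H \too \mirr{\Lambda}^{hH}, \]
which is precisely the map shown to be an equivalence in \cref{prop:torus-is-Borel} (applied to the finite group $H$ acting on the restriction $\Lambda|_H$). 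This gives the desired natural equivalence between the two composites as genuine $G$-spaces.

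The main obstacle is not any single computation but the verification that the identifications assembled above are genuinely natural in $\Lambda$ and compatible with the limit decomposition over finite quotients of $G$; however, each step of the construction is manifestly functorial (the exact sequence $0 \to \Lambda \to \Lambda_\RR \to \mirr{\Lambda} \to 0$ is natural in $\Lambda$, contractibility of $\Lambda_\RR$ is preserved under restriction, and the Borel-to-genuine comparison is a natural transformation), so this is routine bookkeeping rather than a genuine difficulty.
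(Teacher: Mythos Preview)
Your proof is correct and follows essentially the same approach as the paper: reduce to finite $G$, then verify that the genuine $G$-connective spectrum underlying $\mirr{\Lambda}$ agrees with $\beta(\Sigma\Lambda)$ by invoking \cref{prop:torus-is-Borel}. You are more explicit than the paper about the identification of the underlying Borel object (via the contractibility of $\Lambda_\RR$), whereas the paper takes this as evident from $\mirr{\Lambda}$ being a $K(\Lambda,1)$; conversely, the paper packages your orbit-by-orbit fixed-point check into the single observation that Borel-ness of a connective $G$-spectrum can be tested on its underlying $G$-space.
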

\begin{proof}
    As all categories in question are formally defined as limits of the finite quotients
    of $G$, we may assume that $G$ is finite. 
    We have to verify that the underlying homotopy type of $\mirr{\Lambda}$ is Borel, as a connective spectrum. As limits of connective spectra are computed on underlying spaces, it is enough to verify that it is a Borel space. This precisely is the content of \Cref{prop:torus-is-Borel}. 
\end{proof}

\subsection{Deducing the \texorpdfstring{$\GalG$}{G}-equivariant description}\label{ssec: homotopy type of BLambda}
Throughout this section, we let $\baf$ be a field with absolute Galois-group 
$\GalG$. 

Recall from the discussion in \cref{ssec: motivic-vs-equivariant} that
we have adjunctions
\[ C_{\et} \colon \Spcn(\deloop{\GalG}) \rightleftarrows  \Spcn_{\et}(\baf) \noloc U_{\et}\]
\[ C_{\Nis} \colon \Spcn( \cO_{\GalG}^{\op}) \rightleftarrows \Spcn_{\Nis}(\baf) \noloc U_{\Nis}\,,\]
such that the right adjoints commute with the natural inclusions
$\beta \colon \Spcn(\deloop{G})\to \Spcn(\cO_{\GalG}^{\op})$ and
$\nu_{\ast}\colon \etSpcn(\baf) \to \nsSpcn(\baf)$. 
For a torus $\t$, the considerations of \cref{ssec: top tori} allow us to 
give a more explicit description of the $\GalG$-space
$U_{\Nis}(\Sigma{\cow{\t}})$ as follows.

\begin{cor}\label{cor: top tori compact}
Let $\t$ be a torus over $\baf$ and let 
$\Lambda = U_{\et}\cow{\t} \in \Lattices^{\deloop{\GalG}}$ denote the underlying
lattice with $\GalG$-action of $\cow{\t}$. Then, we have a natural equivalence of commutative groups in genuine $G$-spaces:
\[U_{\Nis}(\Sigma{\cow{\t}}) \simeq \mirr{\Lambda}  \]
\end{cor}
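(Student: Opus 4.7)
The plan is to chain three pre-existing inputs: the commutative square relating $\nu_\ast$, $U_{\Nis}$, $U_{\et}$, and $\beta$ from \cref{ssec: motivic-vs-equivariant}; the fact that $U_{\et}$ preserves colimits and hence commutes with $\Sigma$, as recorded in \cref{lem:C-fully-faithful}; and the topological identification $\beta(\Sigma \Lambda) \simeq \mirr{\Lambda}$ of \cref{comparison topological tori}.

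Following the convention of the rotated Weil resolution~\eqref{eq:exact_seq_char}, I will read $\Sigma \cow{\t}$ as the suspension computed in $\etSpcn(\baf)$, tacitly forgotten to $\nsSpcn(\baf)$ via $\nu_\ast$ so that $U_{\Nis}$ can be applied. The commuting square on connective spectra from \cref{ssec: motivic-vs-equivariant} then gives
\[ U_{\Nis}(\Sigma \cow{\t}) \;\simeq\; \beta\bigl(U_{\et}(\Sigma \cow{\t})\bigr) \;\simeq\; \beta\bigl(\Sigma\, U_{\et}(\cow{\t})\bigr) \;=\; \beta(\Sigma \Lambda), \]
where the middle equivalence uses colimit preservation of $U_{\et}$ from \cref{lem:C-fully-faithful} and the last equality is just the definition of $\Lambda$. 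A final appeal to \cref{comparison topological tori} identifies $\beta(\Sigma \Lambda)$ with $\mirr{\Lambda}$ as commutative groups in genuine $\GalG$-spaces, yielding the desired equivalence. Naturality in $\t$ is automatic from the naturality of each step above.

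The whole chain is essentially formal once the preliminaries are granted. The genuine content is hidden in \cref{comparison topological tori}, and ultimately in the Borelness assertion of \cref{prop:torus-is-Borel}: that is what explains why $\beta(\Sigma \Lambda)$ --- a priori only the Borel replacement of a would-be genuine $\GalG$-torus --- actually agrees on the nose with the topological torus $\mirr{\Lambda}$. Beyond invoking those two results, no further obstacles arise.
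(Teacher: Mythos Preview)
Your proof is correct and follows essentially the same approach as the paper: both chain the identification $U_{\Nis}\circ\nu_\ast \simeq \beta\circ U_{\et}$ from the commuting square in \cref{ssec: motivic-vs-equivariant}, the colimit preservation of $U_{\et}$ from \cref{lem:C-fully-faithful}, and the identification of \cref{comparison topological tori}. Your attribution of the individual steps is in fact slightly more precise than the paper's, which cites \cref{lem:C-fully-faithful} for the whole first display.
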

\begin{proof}
    We know from \Cref{lem:C-fully-faithful} that we have
    \[ U_{\Nis}(\Sigma{\cow{\t}}) = \beta U_{\et} (\Sigma{\cow{\t}}) \simeq \beta \Sigma U_{\et} (\cow{\t}) \,. \]
    The right-hand side is by definition the Borel $G$-space associated to
    $\Sigma U_{\et} (\cow{\t})$ which is precisely is the delooping of the character lattice and agrees with
    $\mirr{\Lambda}$ by \Cref{comparison topological tori}, so we conclude.
\end{proof}

Put differently, all ways to associate a $\GalG$-equivariant topological torus 
to an algebraic torus $\t$ agree. Accordingly, this construction gets a name.

\begin{defn}
Let $\t/\baf$ be a n algebraic torus and let $\Lambda$ be the underlying lattice with 
$\GalG$-action of $\cow{\t}$. We call
\[\mdef{{\topTor{\t}}}\coloneq \mirr{\Lambda} \]
the \tdef{topological mirror} of $\t$. Note that, by \cref{cor: top tori compact}, 
the underlying $G$-space of $\topTor{\t}$ is \emph{compact} in $\Spc_{\GalG}$.
\end{defn}

Recall from \cref{prop: c-commutes-with-suspension} that the adjunction 
$C_{\Nis}\dashv U_{\Nis}$ on the level of sheaves of spectra further refines to an adjunction 
\[ \cC\colon \GSp{\GalG} \adj \SH{\baf}\colon \cU,\]
where $\cC$ is symmetric monoidal and $\cU$ is lax monoidal.

Our goal is to use this to describe the algebraic $K$-theory of $\t$ in terms 
of the equivariant homology of the mirror ${\topTor{\t}}$. 
To this end, let us first observe the following general fact.

\begin{lem} \label{lem: projection-formula-c-U}
    Let $X\in \GSp{\GalG}$ be dualizable and $E \in \SH{\baf}$ be arbitrary. 
    The natural map 
    \[ \mathcal U(E) \otimes X \too \mathcal U(E \otimes \cC(X)) \] 
    is an equivalence of $\GalG$-spectra.
\end{lem}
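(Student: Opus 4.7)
The plan is to give the standard proof of this projection formula. First I would unwind the construction of the natural map. Since $\cC \dashv \cU$ is a symmetric monoidal adjunction (with $\cC$ strong monoidal and $\cU$ lax monoidal), the map in question is mate to the composite
\[ \cC(\cU(E) \otimes X) \xrightarrow{\sim} \cC\cU(E) \otimes \cC(X) \xrightarrow{\varepsilon_E \otimes \id} E \otimes \cC(X), \]
where the first equivalence uses strong monoidality of $\cC$ and the second uses the counit of the adjunction. This construction is natural in both $E$ and $X$ and matches the canonical projection formula map.

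The key observation is that $\cC$ being symmetric monoidal preserves duals, so if $X$ is dualizable in $\GSp{\GalG}$ with dual $X^{\vee}$, then $\cC(X)$ is dualizable in $\SH{\baf}$ with dual naturally identified with $\cC(X^{\vee})$. With this in hand I would verify the claim by the Yoneda lemma: for any test object $Z \in \GSp{\GalG}$ we have a chain of natural equivalences
\begin{align*}
    \Map_{\GSp{\GalG}}(Z, \cU(E) \otimes X)
    &\simeq \Map_{\GSp{\GalG}}(Z \otimes X^{\vee}, \cU(E)) \\
    &\simeq \Map_{\SH{\baf}}(\cC(Z \otimes X^{\vee}), E) \\
    &\simeq \Map_{\SH{\baf}}(\cC(Z) \otimes \cC(X)^{\vee}, E) \\
    &\simeq \Map_{\SH{\baf}}(\cC(Z), E \otimes \cC(X)) \\
    &\simeq \Map_{\GSp{\GalG}}(Z, \cU(E \otimes \cC(X))),
\end{align*}
using dualizability of $X$, the adjunction $\cC \dashv \cU$, strong monoidality of $\cC$, and dualizability of $\cC(X)$.

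The only step that requires any real care is confirming that this composite equivalence is precisely the one induced by the projection formula map, rather than some other natural transformation. This is a straightforward but somewhat tedious diagram chase, which amounts to unwinding the definition of the map via the counit $\varepsilon$ and checking compatibility with the evaluation and coevaluation maps exhibiting the duality $X \dashv X^{\vee}$. This is the only potential obstacle, and it is entirely formal; there is no genuine geometric or homotopical content beyond the fact that $\cC$ is symmetric monoidal.
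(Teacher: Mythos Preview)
Your proof is correct and follows essentially the same approach as the paper: both verify the claim by a Yoneda argument using dualizability of $X$, the adjunction $\cC \dashv \cU$, and strong monoidality of $\cC$. The paper's proof is slightly terser in that it does not explicitly address the compatibility check you flag at the end, but otherwise the two are identical.
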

\begin{proof}
   This follows by general nonsense about monoidal adjunctions. Indeed, since $\cC$
   is symmetric monoidal, we have for any $A\in \GSp{\GalG}$ natural equivalences
   \begin{align*}
       \Map_{\GSp{\GalG}}(A, \mathcal{U}(E) \otimes X) 
       &\simeq \Map_{\GSp{\GalG}}(A \otimes X^{\vee}, \mathcal{U}(E))\\ 
       &\simeq \Map_{\SH{\baf}}(\cC(A) \otimes \cC(X)^{\vee}, E)\\
       &\simeq \Map_{\SH{\baf}}(\cC(A), E\otimes \cC(X))\\
       &\simeq \Map_{\GSp{\GalG}}(A, \mathcal{U}(E\otimes \cC(X))) 
   \end{align*}
   and so the claim follows by Yoneda.
\end{proof}

We want to apply this projection formula in the case where $X$ is the $G$-equivariant
suspension spectrum $\SS_{\GalG}[{\topTor{\t}}]= \SS_{\GalG}[U_{\Nis}(\Sigma\cow{\t})]$. To do this, we compute the following.

\begin{prop}\label{prop: Blattice-is-topological} 
    Let $\t$ be an algebraic torus over $\baf$ with character lattice $\cow{\t}$.
    The natural comparison map 
    \[ \cC (\SS_{\GalG}[U_{\Nis}(\Sigma \cow{\t})])
    \too \SS_{\baf}[\Sigma \cow{\t}] \in \SH{\baf}\]
    is an equivalence of motivic spectra.
\end{prop}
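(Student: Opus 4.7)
The plan is to use the commutative diagram of \Cref{prop: c-commutes-with-suspension} to identify the comparison map with the image under $\SS_{\baf}[-]$ of a certain counit, and then reduce to the case of character lattices of tori via a Weil resolution.

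More precisely, by \Cref{prop: c-commutes-with-suspension} (applied to connective spectra via the group algebra factorization of $\SS[-]$), the comparison map in question is obtained by applying the colimit preserving functor $\SS_{\baf}[-]\colon \nsSpcn(\baf)\to \SH{\baf}$ to the counit
\[
\epsilon_{\Sigma\cow{\t}} \colon c_{\Nis}U_{\Nis}(\Sigma\cow{\t}) \to \Sigma\cow{\t}
\]
of the adjunction $c_{\Nis}\dashv U_{\Nis}$ in $\nsSpcn(\baf)$. Hence it suffices to show that this counit is itself an equivalence of Nisnevich sheaves of connective spectra.

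To this end, fix a finite Galois splitting field $E/\baf$ of $\t$ and consider the associated Weil resolution $0\to \kt \to \wt \to \t \to 0$ from \Cref{cons: Weil resolution}. By \Cref{cofiber sequence lat} we have a cofiber sequence
\[
\cow{\wt} \to \cow{\kt} \to \Sigma\cow{\t}
\]
in $\nsSpcn(\baf)$. Since $U_{\Nis}$ preserves colimits by \Cref{lem:C-fully-faithful} and $c_{\Nis}$ is a left adjoint, the composite $c_{\Nis}U_{\Nis}$ preserves this cofiber sequence, so naturality of $\epsilon$ reduces our task to showing that $\epsilon_{\cow{\wt}}$ and $\epsilon_{\cow{\kt}}$ are equivalences in $\nsSpcn(\baf)$.

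For this, we use that both $\cow{\wt}$ and $\cow{\kt}$ are represented by étale group schemes over $\baf$ (being character lattices of tori), hence are étale-locally constant with finitely generated stalks. For any Nisnevich henselian local $\baf$-algebra $R$ with residue field $k$, henselian lifting for étale schemes yields $\cow{\wt}(R) = \cow{\wt}(k)$, and similarly for $\cow{\kt}$. Since $\cow{\wt}(k)$ and $\cow{\kt}(k)$ are determined by the Galois action on the stalks at a separable closure, both sheaves are Nisnevich-locally obtained by left Kan extension from the subsite $\Fin_{\GalG}\subseteq \Sm{\baf}$, and therefore lie in the essential image of the fully faithful functor $c_{\Nis}$ of \Cref{lem:C-fully-faithful}. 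By the triangle identities, the counit of a fully faithful left adjoint is an equivalence on every object in its essential image, so $\epsilon_{\cow{\wt}}$ and $\epsilon_{\cow{\kt}}$ are equivalences, completing the argument. The main technical input is the identification of Nisnevich stalks of étale group schemes with Galois data via henselian lifting, everything else being formal manipulation of the adjunctions and the Weil resolution.
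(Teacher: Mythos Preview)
Your proposal is correct and follows essentially the same route as the paper: reduce to the counit of $C_{\Nis}\dashv U_{\Nis}$, apply the Weil resolution cofiber sequence from \Cref{cofiber sequence lat}, and use that $C_{\Nis}U_{\Nis}$ preserves colimits to reduce to the undelooped lattices $\cow{\wt}$ and $\cow{\kt}$. The only difference is in the last step: where you argue via henselian lifting and Nisnevich stalks that these sheaves lie in the essential image of $c_{\Nis}$, the paper simply invokes \Cref{lem:lattice-is-finite-etale}, which exhibits any character lattice as a coproduct of representable finite \'etale $\baf$-schemes and hence manifestly in the essential image of the fully faithful $C_{\Nis}$---this is cleaner and avoids having to analyze stalks of $c_{\Nis}u_{\Nis}$ at transcendental residue fields.
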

\begin{proof}
The map in question factors as the composite 
\[ \cC( \SS_{\GalG} [U_{\Nis}(\Sigma\cow{\t})]) 
\xrightarrow{\sim} \SS_{\baf}[C_{\Nis}U_{\Nis}(\Sigma \cow{\t})] \to \SS_{\baf}[\Sigma \cow{\t}],\]
where the left hand morphism is an equivalence by \cref{prop: c-commutes-with-suspension}. For the 
right hand side, we claim that the unit map
    \[ \left( C_{\Nis} \circ  U_{\Nis} \right) (\Sigma \Lambda^\ast (\t)) \too \Sigma \Lambda^\ast(\t) \] 
is an equivalence of Nisnevich sheaves valued in connective spectra. 
Consider the resolution of $\t$ via tori
\[ \kt \to \wt \to \t\]
as in~\eqref{eq:exact_seq_tori}. Then, by \cref{cofiber sequence lat}, we have a cofiber
sequence of Nisnevich connective spectra
\[ \cow{\wt} \to \cow{\kt} \to \Sigma \cow{\t}.\]
Since the composite $C_{\Nis} \circ {U}_{\Nis}$ preserves colimits by 
\cref{lem:C-fully-faithful} we obtain a map of cofiber sequences connective Nisnevich spectra
of the form
\[\begin{tikzcd}
	{\cow{\wt}} & {\cow{\kt}} & {\Sigma \cow{\t}} \\
	{(C_{\Nis} \circ {U}_{\Nis})(\cow{\wt})} & {(C_{\Nis} \circ {U}_{\Nis})(\cow{\kt})} & {(C_{\Nis} \circ {U}_{\Nis})(\Sigma\cow{\t})}\, .
	\arrow[from=1-1, to=1-2]
	\arrow[from=1-2, to=1-3]
	\arrow[from=2-1, to=1-1]
	\arrow[from=2-1, to=2-2]
	\arrow[from=2-2, to=1-2]
	\arrow[from=2-2, to=2-3]
	\arrow[from=2-3, to=1-3]
\end{tikzcd}\]
It follows from \Cref{lem:lattice-is-finite-etale} (together with \Cref{lem:C-fully-faithful} that the 
first two vertical maps are equivalences and so the right hand map is an equivalence
as well.
\end{proof}

This allows us to use \cref{lem: projection-formula-c-U} to import \cref{thm: mainthm}.
Recall from \cref{defn: G equiv K theory} that for any $X\in \Sm{\baf}$, the 
$\GalG$-equivariant algebraic $K$-theory spectrum of $X$ is defined as 
$K_{\GalG}(X)= \cU (\KGL_{\baf}^{X}) \in \CAlg(\GSp{\GalG})$.
As usual, we denote by $K_{\GalG}(\baf)[-]= \SS[-] \otimes K_{\GalG}(\baf)$ the 
symmetric monoidal enhancement of the homology functor. 

\begin{thm}\label{equivariant mainthm}
    Let $\t$ be an algebraic torus defined over a field $F$ with absolute Galois
    group $\GalG$. There is a natural equivalence of commutative $K_{\GalG}(\baf)$-algebras
    \[ K_{\GalG}(\baf)[{\topTor{\t}}] \xrightarrow{\sim} K_{\GalG}(\t) 
    \in \CAlg_{K_\GalG(\baf)}(\GSp{\GalG}) \]
    between the equivariant homology of the topological mirror $\topTor{\t}$ with coefficients in $K_{\GalG}(\baf)$ and the $\GalG$-equivariant algebraic $K$-theory of $\t$.
 \end{thm}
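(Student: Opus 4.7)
The plan is to deduce this equivariant statement from the motivic \Cref{thm: mainthm} by applying the lax symmetric monoidal right adjoint
\[ \cU \colon \SH{\baf} \too \GSp{\GalG} \]
from \Cref{prop: c-commutes-with-suspension} and then rewriting both sides in purely equivariant terms. Since $\cU$ is lax symmetric monoidal, it takes the commutative $\KGL_\baf$-algebra equivalence $\Four_{\rm{mot}}^{\Sigma\cow{\t}}\colon \KGL_\baf[\Sigma\cow{\t}] \xrightarrow{\sim}\KGL_\baf^\t$ to an equivalence of commutative $\cU(\KGL_\baf)$-algebras. The right-hand side is tautologically $\cU(\KGL_\baf^\t) = K_\GalG(\t)$ by \Cref{defn: G equiv K theory}, while $\cU(\KGL_\baf) = K_\GalG(\baf)$. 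It therefore suffices to identify $\cU(\KGL_\baf[\Sigma\cow{\t}])$ naturally with $K_\GalG(\baf)[\topTor{\t}]$ as commutative $K_\GalG(\baf)$-algebras.

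For this identification, combine \Cref{prop: Blattice-is-topological} with \Cref{cor: top tori compact} to obtain a natural equivalence of commutative group objects in motivic spaces
\[ \SS_\baf[\Sigma \cow{\t}] \simeq \cC(\SS_\GalG[\topTor{\t}]). \]
Writing $\KGL_\baf[\Sigma \cow{\t}] \simeq \KGL_\baf \otimes \SS_\baf[\Sigma \cow{\t}]$ and applying the projection formula of \Cref{lem: projection-formula-c-U} with $E = \KGL_\baf$ and $X = \SS_\GalG[\topTor{\t}]$ then yields an equivalence
\[ K_\GalG(\baf) \otimes \SS_\GalG[\topTor{\t}] \xrightarrow{\sim} \cU(\KGL_\baf \otimes \cC(\SS_\GalG[\topTor{\t}])) \simeq \cU(\KGL_\baf[\Sigma\cow{\t}]), \]
whose source is by definition $K_\GalG(\baf)[\topTor{\t}]$. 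Since $\topTor{\t}$ is a commutative group object, $\SS_\GalG[\topTor{\t}]$ is a commutative algebra and the projection formula map is an equivalence of commutative $K_\GalG(\baf)$-algebras; chaining with $\cU$ applied to $\Four_{\rm{mot}}^{\Sigma\cow{\t}}$ produces the asserted equivalence, and naturality in $\t$ follows because every construction in sight is manifestly functorial.

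The one substantive hypothesis to verify is that $\SS_\GalG[\topTor{\t}]$ is dualizable in $\GSp{\GalG}$, as required in order to invoke \Cref{lem: projection-formula-c-U}. Here the main geometric input is \Cref{prop:torus-is-Borel}: the $\GalG$-action on $\topTor{\t}$ factors through the finite quotient $G = \Gal(E/\baf)$ for any splitting field $E$, and with respect to this quotient the topological mirror $\topTor{\t}$ is a smooth compact $G$-manifold, hence admits a finite $G$-CW structure. Consequently, $\SS_G[\topTor{\t}]$ is dualizable in $\GSp{G}$ via equivariant Atiyah duality, and dualizability then transfers to $\SS_\GalG[\topTor{\t}] \in \GSp{\GalG}$ through the defining limit $\GSp{\GalG} = \lim_{N \triangleleft \GalG} \GSp{\GalG/N}$, since pullback functors along finite quotients are symmetric monoidal. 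This dualizability step is the only place where the specific geometry of algebraic tori enters the deduction; everything else is formal transport along the monoidal adjunction $\cC \dashv \cU$.
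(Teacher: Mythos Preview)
Your proof is correct and follows essentially the same route as the paper: apply $\cU$ to the motivic equivalence of \Cref{thm: mainthm}, then identify $\cU(\KGL_\baf[\Sigma\cow{\t}])$ with $K_\GalG(\baf)[\topTor{\t}]$ by combining \Cref{prop: Blattice-is-topological}, \Cref{cor: top tori compact}, and the projection formula of \Cref{lem: projection-formula-c-U}. Two cosmetic remarks: the displayed equivalence $\SS_\baf[\Sigma\cow{\t}]\simeq\cC(\SS_\GalG[\topTor{\t}])$ lives in $\SH{\baf}$, not in ``motivic spaces''; and the fact that the $\GalG$-action on $\topTor{\t}$ factors through a finite quotient is not the content of \Cref{prop:torus-is-Borel} (which asserts Borelness) but rather a direct consequence of the character lattice being a finite-rank lattice with continuous $\GalG$-action.
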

\begin{proof}
Applying the lax monoidal functor $\mathcal U$ to the map of \Cref{thm: mainthm}, yields
\[ \mathcal U(\Four^{\Sigma \cow{\t}}) \colon \mathcal U( \KGL[\Sigma\cow{\t}]) 
\xrightarrow{\sim} \mathcal U(\KGL^{\t})=K_{\GalG}(\t) 
\in \CAlg_{K_{\GalG}(\baf)}(\GSp{\GalG})\]
and so it suffices to show that we have a natural equivalence
\begin{equation}\label{eq: G ass}
 K_{\GalG}(F)[{\topTor{\t}}] \simeq \mathcal U( \KGL[\Sigma\cow{\t}])\,.
\end{equation}
Recall that ${\topTor{\t}}\simeq U_{\Nis}(\Sigma\cow{\t}) \in \Spc_{\GalG}$ is Borel
and compact by \Cref{cor: top tori compact}, which implies that the spectrum 
$\SS[U_{\Nis}(\Sigma \cow{\t})]\in \GSp{\GalG}$ is dualizable.
Thus, we may use the projection formula of \cref{lem: projection-formula-c-U} to see that 
the left hand side of~\eqref{eq: G ass} is given by
\[ K_{\GalG}(F)[{\topTor{\t}}]= \cU(\KGL) \otimes \SS[U_{\Nis}(\Sigma\cow{\t})]
\simeq \cU (\KGL \otimes \cC(\SS[U_{\Nis}(\Sigma\cow{\t})])).\]
Finally, by \cref{prop: Blattice-is-topological}, we have a natural equivalence
\[ \cU (\KGL \otimes \cC(\SS[U_{\Nis}(\Sigma\cow{\t})]))
\simeq \cU(\KGL \otimes \SS[\Sigma \cow{\t}])= \cU(\KGL[\Sigma \cow{\t}])\]
so we conclude.
\end{proof}

In particular, we immediately get the following by taking $\GalG$-fixed points.

\begin{cor}\label{cor: K theory as fixed points}
Let $\t/\baf$ be a torus with topological mirror ${\topTor{\t}}$. 
The algebraic $K$-theory spectrum of $\t$ is computed by the genuine fixed points
\[ C_{\ast}^{G}(\topTor{\t};K_{\GalG}(\baf))=\left( K_{\GalG}(\baf) \otimes {\topTor{\t}} \right)^{\GalG} 
\xrightarrow{\sim} K(\t) \in \CAlg(\Sp).\]
\end{cor}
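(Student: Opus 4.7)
The plan is to simply take genuine $\GalG$-fixed points on both sides of the equivalence furnished by \Cref{equivariant mainthm}, so the proof amounts to identifying what the two sides become.

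First I would address the right-hand side. By \Cref{defn: G equiv K theory}, we have $K_{\GalG}(\t) = \mathcal{U}(\KGL_{\baf}^{\t})$, and the $\GalG$-fixed points of $\mathcal{U}(E)$ for any $E \in \SH{\baf}$ are naturally identified with global sections: using the adjunction $\mathcal{C} \dashv \mathcal{U}$ from \Cref{prop: c-commutes-with-suspension} and the fact that $\mathcal{C}(\SS_{\GalG}) \simeq \SS_{\baf}$ (which follows because $\GalG/\GalG$ corresponds to $\Spec \baf$ under the embedding of \eqref{eq: G-sets into schemes}), we compute
\[ \mathcal{U}(E)^{\GalG} \simeq \Map_{\GSp{\GalG}}(\SS_{\GalG}, \mathcal{U}(E)) \simeq \Map_{\SH{\baf}}(\SS_{\baf}, E) \simeq \Gamma(E). \]
Applied to $E = \KGL_{\baf}^{\t} = f_{\ast} \KGL_{\t}$ where $f\colon \t \to \Spec \baf$ is the structure map, and using that $\t$ is smooth over the (regular) field $\baf$ so that $\KH(\t) \simeq K(\t)$ by the discussion preceding \Cref{defn: KGL}, this yields
\[ K_{\GalG}(\t)^{\GalG} \simeq \Gamma(f_{\ast}\KGL_{\t}) \simeq \KH_{\baf}(\t) \simeq K(\t) \in \CAlg(\Sp). \]

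For the left-hand side, the identification is essentially by definition: by construction, $K_{\GalG}(\baf)[\topTor{\t}] = K_{\GalG}(\baf) \otimes \Sigma^{\infty}_{+}\topTor{\t}$ as a $K_{\GalG}(\baf)$-algebra in $\GSp{\GalG}$, and its $\GalG$-fixed points are precisely
\[ \bigl( K_{\GalG}(\baf) \otimes \topTor{\t} \bigr)^{\GalG} = C^{\GalG}_{\ast}(\topTor{\t}; K_{\GalG}(\baf)), \]
which is by definition the equivariant Bredon homology spectrum of $\topTor{\t}$ with coefficients in $K_{\GalG}(\baf)$.

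I do not anticipate any serious obstacle here: once \Cref{equivariant mainthm} is in place, the corollary is just a matter of unwinding the definition of the fixed points functor and applying it to a known equivalence of $\EE_\infty$-algebras. The only thing one has to be slightly careful about is that the fixed points functor is lax symmetric monoidal, which is enough to upgrade the resulting equivalence of spectra to an equivalence in $\CAlg(\Sp)$ since we are applying it to an equivalence of $\EE_\infty$-algebras.
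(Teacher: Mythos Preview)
Your proposal is correct and follows exactly the approach the paper indicates: the paper simply states that the corollary follows ``by taking $\GalG$-fixed points'' of \Cref{equivariant mainthm}, and you have correctly unwound what this means on both sides, including the identification $K_{\GalG}(\t)^{\GalG} \simeq K(\t)$ via the adjunction $\mathcal C \dashv \mathcal U$.
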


\begin{rem}
    Let $K$ be a genuine $G$-spectrum and $X$ a genuine $G/N$-spectrum. Writing $\mathrm{infl}_{G/N}^{G}$ for the inflation functor from genuine $G/N$-spectra to genuine $G$-spectra, there is a natural equivalence of genuine $G/N$-spectra:
    \[ K^N \otimes X \simeq (K \otimes \mathrm{infl}^{G}_{G/N} X)^N.\] 

    Choose a splitting field $E$ of the torus $\t$, corresponding to a finite index normal subgroup $N<G$. 
    Then $\Lambda(\t)$ will be inflated from $G/N$ and so will be $\topTor{\t}$. 
    Applying $N$-fixed points to \Cref{equivariant mainthm}, we can use the above formula $K=K_{\GalG}$ and $X$ the $G/N$-topological mirror of~$\t$ to obtain the presentation of \Cref{thm: thm A intro} from the introduction.
\end{rem}

\subsection{Applications}\label{ssec: G computation}

We now illustrate how more careful analysis of the topological mirror 
can be used to leverage \cref{cor: K theory as fixed points} to
give explicit formulas. We first
record that, as a further immediate corollary, we get an Atiyah--Hirzebruch
spectral sequence computing the $K$-groups of a torus.

\begin{cor}\label{spectral sequence}
Let $\t$ be an algebraic torus over a field $F$ with absolute Galois group $\GalG$.
The equivalence of \cref{equivariant mainthm} induces a multiplicative
spectral sequence of the form
\[
E^2_{p,q}=H_p^{\GalG}({\topTor{\t}};\pi_qK_{\GalG}(F)) \implies K_{p+q}(T)\,.
\]
\end{cor}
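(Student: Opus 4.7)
The plan is to build this as a standard equivariant Atiyah--Hirzebruch spectral sequence, using the equivalence of \cref{equivariant mainthm} as input and then running the Postnikov filtration of the coefficient spectrum $K_{\GalG}(\baf)$. By \Cref{cor: K theory as fixed points} we have an identification of ring spectra
\[ K(\t) \simeq \bigl(K_{\GalG}(\baf)\otimes \topTor{\t}\bigr)^{\GalG}, \]
so it suffices to produce a convergent multiplicative spectral sequence computing the homotopy groups of the right-hand side. First I would consider the Postnikov tower of $K_{\GalG}(\baf)$ in $\GSp{\GalG}$: this is a tower of $\EE_\infty$-ring $\GalG$-spectra with associated graded $\Sigma^q H\underline{\pi_q K_{\GalG}(\baf)}$, where $\underline{\pi_q K_{\GalG}(\baf)}$ denotes the Mackey functor of equivariant homotopy groups.

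Next I would tensor the Postnikov tower with the (dualizable) suspension spectrum of $\topTor{\t}$ and take $\GalG$-fixed points, obtaining a tower of spectra whose associated graded pieces are
\[ \bigl(\Sigma^q H\underline{\pi_q K_{\GalG}(\baf)} \otimes \topTor{\t}\bigr)^{\GalG}, \]
whose homotopy groups are by definition the Bredon homology groups $H^{\GalG}_p(\topTor{\t};\pi_q K_{\GalG}(\baf))$ (indexed as usual). This gives the desired $E^2$-page. For convergence, note that the $\GalG$-action on $\topTor{\t}$ factors through the finite quotient $\GalG/N=\mathrm{Gal}(E/\baf)$ for a splitting field $E$, and by Illman's theorem the torus $\topTor{\t}$ admits a finite $(\GalG/N)$-CW structure. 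Thus the filtration is eventually constant in each degree and the spectral sequence converges strongly.

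Finally, multiplicativity: the Postnikov filtration of an $\EE_\infty$-ring in $\GSp{\GalG}$ is multiplicative, and $\topTor{\t}$ is itself a commutative group object (its suspension spectrum is an $\EE_\infty$-algebra via the Lie group structure inherited from the torus), so the smash product $K_{\GalG}(\baf)\otimes \topTor{\t}$ carries a compatible $\EE_\infty$-filtration. Taking $\GalG$-fixed points preserves this, and the identification with $K(\t)$ provided by \cref{equivariant mainthm} is one of $\EE_\infty$-rings, so the resulting spectral sequence is multiplicative and converges to the graded ring $K_\ast(\t)$.

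The only real subtlety is the convergence step, where one needs to know that $\topTor{\t}$ is a finite $\GalG$-CW complex so that only finitely many Postnikov layers contribute in any total degree; everything else is formal manipulation of filtered $\EE_\infty$-objects in $\GSp{\GalG}$ and the standard identification of $H\underline{M}$-homology with Bredon homology.
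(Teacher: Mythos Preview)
Your proposal is correct and spells out exactly the standard equivariant Atiyah--Hirzebruch argument that the paper leaves implicit; in the paper this corollary is stated without proof, being recorded merely as ``a further immediate corollary'' of \cref{equivariant mainthm}. The ingredients you use---connectivity of $K_{\GalG}(\baf)$, the finite $\GalG/N$-CW structure on $\topTor{\t}$, and multiplicativity of the Postnikov filtration---are precisely what makes the statement immediate.
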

Note that, since the $K_0$ of any field is given by $\ZZ$, the 
rank map $\pi_0K_{\GalG}(\baf)\to \underline{\ZZ}$ to the constant Mackey functor with 
value $\ZZ$ is an equivalence. In particular, since the spectral sequence is concentrated
in positive degrees, we also learn the following.

\begin{cor}\label{cor: K0 homology}
For any torus $\t/\baf$ we have an equivalence
\begin{equation}\label{eq: K0 homology}
H_0^{\GalG}({\topTor{\t}};\underline{\ZZ}) \xrightarrow{\sim} K_0(\t),
\end{equation}
where the left hand side is Bredon homology with coefficients in the constant
Mackey-functor $\underline{\ZZ}$.
\end{cor}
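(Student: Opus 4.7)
The plan is to deduce this immediately from the multiplicative Atiyah--Hirzebruch spectral sequence of \cref{spectral sequence}, namely
\[
E^2_{p,q}=H_p^{\GalG}({\topTor{\t}};\pi_qK_{\GalG}(F)) \implies K_{p+q}(T),
\]
by showing that the only contribution to $K_0(\t)$ comes from the corner $E^2_{0,0}$, and that this corner is naturally isomorphic to $H_0^{\GalG}(\topTor{\t};\underline{\ZZ})$.

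First I would identify the corner. By \cref{defn: G equiv K theory}, for each open subgroup $H\subseteq \GalG$ the fixed points of the $\GalG$-spectrum $K_{\GalG}(F)$ are given by $K_\GalG(F)^H= K(\bar F^H)$. Since each $\bar F^H$ is a field, $\pi_0 K(\bar F^H)\cong \ZZ$ via the rank map, functorially in restrictions and transfers, so that the unit map of Mackey functors
\[ \underline{\ZZ} \xrightarrow{\sim} \pi_0 K_{\GalG}(F) \]
is an equivalence. This identifies $E^2_{0,0}$ with $H_0^{\GalG}({\topTor{\t}};\underline{\ZZ})$.

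Next I would check that this corner survives and carries all of $K_0(\t)$. For connectivity, the $K$-theory spectrum of any field is connective, so for every open subgroup $H\subseteq \GalG$ one has $\pi_q K_\GalG(F)^H = \pi_q K(\bar F^H)=0$ for $q<0$; hence $\pi_q K_\GalG(F)=0$ as a Mackey functor for $q<0$, and thus $E^2_{p,q}=0$ for $q<0$. Moreover, Bredon homology vanishes in negative degrees, so $E^2_{p,q}=0$ for $p<0$. Therefore the only term on the line $p+q=0$ is $E^2_{0,0}$, and the higher filtration steps $E^\infty_{p,-p}$ for $p>0$ vanish. For the differentials landing in or departing from $E^r_{0,0}$, one has $d_r\colon E^r_{r,1-r}\to E^r_{0,0}$ and $d_r\colon E^r_{0,0}\to E^r_{-r,r-1}$; the source of the first vanishes as $1-r<0$ for $r\geq 2$, and the target of the second vanishes as $-r<0$. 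Thus $E^\infty_{0,0}=E^2_{0,0}$, and the edge map of the spectral sequence yields the claimed natural equivalence
\[
H_0^{\GalG}({\topTor{\t}};\underline{\ZZ}) \xrightarrow{\sim} K_0(\t).
\]

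The only real point to verify is the vanishing of $\pi_q K_\GalG(F)$ for $q<0$ as a Mackey functor, which reduces to connectivity of $K$-theory of fields; every other step is a formal consequence of the spectral sequence and its edge behavior.
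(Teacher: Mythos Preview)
Your argument is correct and matches the paper's approach exactly: the paper deduces the corollary from \cref{spectral sequence} by noting that the rank map identifies $\pi_0 K_{\GalG}(\baf)$ with $\underline{\ZZ}$ and that the spectral sequence is concentrated in non-negative degrees, which is precisely the content of your proof spelled out in detail.
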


The left hand side of \cref{cor: K0 homology} now admits an explicit description in terms
of generators and relations:
For any scheme $X/\baf$, write $\Pic(X)$ for the Picard group of $X$, i.e.~the group
of $\otimes$-invertible objects in $\rm{QCoh}(X)^\heart$.
If $\baf \to E$ is a finite \'etale extension, then restriction of scalars defines a functor 
$ \rm{Res}_{E/\baf}\colon \rm{QCoh}(X_E) \too \rm{QCoh}(X)$.
Restricting to line bundles on $X$ and applying $K_0$ thus gives a map
\[\rm{Res}_{E/\baf}\colon \Pic(X_E) \too K_0(X).\]

With this in hand, we recover the following computation of $K_0(T)$ due to Merkurjev 
and Panin from \cite{mp97}.

\begin{cor}[{\cite[Thm.~9.1]{mp97}}] \label{K_0 computation}
Let $\t$ be a torus over a field $\baf$ with splitting field $L/\baf$.
The map of abelian groups
  \[ \rm{Res}_{E/F} \colon \bigoplus_{F\subseteq E \subseteq L} \ZZ[\Pic(\t_E)]
  \too K_0(\t),\]
  is surjective, with kernel generated by all expressions of the form
  \[[E':E](\sL) - (\sL \otimes_E E'),\] 
   where $\sL\in \Pic(T_{E})$ and $E\into E'$ is a $\baf$-linear finite \'etale extension.\footnote{not necessarily compatible with the embedding to $L$.}
\end{cor}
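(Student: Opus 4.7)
The plan is to combine Corollary~\ref{cor: K0 homology}, which identifies $K_0(T)$ with the Bredon homology $H_0^G(\topTor{T};\underline{\ZZ})$, with a concrete cellular presentation of the latter in terms of Picard groups of the intermediate tori. First I would identify the path components of the fixed-point spaces $\topTor{T}^H$ with the Picard groups: given any subgroup $H \leq G$ with fixed field $E = L^H$, the short exact sequence $0 \to \Lambda \to \Lambda_{\RR} \to \topTor{T} \to 0$ together with $H^1(H;\Lambda_{\RR}) = 0$ (precisely as in the proof of Proposition~\ref{prop:torus-is-Borel}) yields a natural isomorphism $\pi_0(\topTor{T}^H) \cong H^1(H;\Lambda)$. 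Sansuc's computation of the Picard group of an algebraic torus, which is an incarnation of Hilbert~90, then produces a canonical identification $H^1(H;\Lambda) \cong \Pic(T_E)$.

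The next ingredient is the standard coend-type presentation of $H_0^G(X;\underline{\ZZ})$ for a finite $G$-CW complex $X$, by which generators are indexed by components of fixed-point spaces $X^H$ and relations are imposed by morphisms in the orbit category, weighted by the Mackey-functor structure of $\underline{\ZZ}$ (restrictions act as identities, transfers as multiplication by the index). Applied to $X = \topTor{T}$ and combined with the Hilbert~90 identification, this yields generators
$\bigoplus_{F \subseteq E \subseteq L} \ZZ[\Pic(T_E)]$ in the source, mapping to $K_0(T)$ via restriction of scalars. For a subgroup inclusion $H' \leq H$ corresponding to an extension $E \hookrightarrow E'$ of intermediate fields, the associated transfer relation in the coend reads $[H:H'] \cdot \mathcal{L} = \mathrm{res}_{H \to H'}(\mathcal{L})$, which under the identification above translates into exactly the Merkurjev--Panin relation $[E':E] \cdot \mathcal{L} = \mathcal{L} \otimes_E E'$.

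The main obstacle will be the careful Mackey-functor bookkeeping: one must verify that the directions and multiplicities of the transfers appearing in the constant Mackey functor $\underline{\ZZ}$ reproduce precisely the projection-formula relations of Merkurjev and Panin, and that no additional independent relations are contributed by the coend. Surjectivity onto $K_0(T)$ then follows because every class already comes from the $0$-skeleton in any $G$-equivariant CW structure on $\topTor{T}$ whose cell stabilizers realize all relevant subgroups, and the description of the kernel reduces to unwinding the identifications from the previous steps.
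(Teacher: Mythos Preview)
Your proposal is correct and follows essentially the same route as the paper: both arguments invoke Corollary~\ref{cor: K0 homology}, compute $\pi_0(\topTor{\t}^H)\cong H^1(H;\Lambda)\cong \Pic(\t_E)$ via Hilbert~90, and then read off the generators and relations from the coend presentation of Bredon homology with coefficients in $\underline{\ZZ}$. The paper carries out the Hilbert~90 step explicitly via the decomposition $\cO^\times_{\t_{\bar\baf}}\simeq \Lambda \times \bar\baf^\times$ rather than citing Sansuc, and leaves the verification of the coend relations to the reader, but otherwise the arguments coincide.
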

\begin{proof}
 The tensor product of $\SS[{\topTor{\t}}]$ and $K_{\GalG}(\baf)$ is computed by
 the coend formula
 \[ K_{\GalG}(\baf)[{\topTor{\t}}]^G \simeq 
 \int^{\GalG/H\in \cO_{\GalG}} ({\topTor{\t}})^H \otimes K_{\GalG}(\baf)^H.\]
Since $\pi_0$ commutes with colimits and tensor products, \cref{cor: K0 homology}
 implies that $K_0(\t)$ is given by
 \[ H_0^G({\topTor{\t}};\underline{\ZZ}) 
 \simeq \int^{\GalG/H\in\cO_{\GalG}} \ZZ \otimes \pi_0{\topTor{\t}}^{H}
 \simeq \left(\bigoplus_{\GalG/H \in \cO_{\GalG}} 
 \ZZ\otimes \pi_0({\topTor{\t}}^{H})\right)/\sim.\]
 We claim that we have natural equivalence
 \[ \pi_0 ( {\topTor{\t}}^{H} )\simeq \Pic(\t_{E})\]
 where $E= \bar{\baf}^{H}$. Indeed, since $\Pic(\t_{\bar{\baf}})$ is trivial, we have by
 Galois descent that
 \[ \Pic(\t_{E})\simeq H^1(H, \sO^{\times}_{\t_{\bar{\baf}}}),\]
where $\sO_{\t_{\bar{\baf}}}$ denotes the ring of functions. Then, there is a canonical
$\GalG$-equivariant decomposition of the units
\[ \sO^{\times}_{\t_{\bar{\baf}}} \simeq \cow{\t_{\bar{\baf}}} \times \bar{\baf}^{\times} 
=\Lambda \times \bar{\baf}^{\times}.\]
By Hilbert 90, we have $H^1(H,\bar{\baf}^{\times})= 0$. Putting everything together
and recalling that ${\topTor{\t}}\simeq \deloop{\Lambda}$, we obtain equivalences
\[ \Pic(\t_{E}) \simeq H^1(H, \Lambda) \simeq \pi_0({\topTor{\t}}^{H}) \]
as desired. Thus, we have obtained an equivalence 
\[ K_0(\t) \simeq \left(\bigoplus_{\GalG/H\in \cO_{\GalG}} \ZZ[\Pic(\bar{\baf}^{H})]\right)/\sim\]
and leave it to the reader to verify that the relations enforced by the coend formula are the
correct ones.
\end{proof}
As a further application, we recover a computation of the $K$-theory of tori of rank $1$. From now on, assume that $\baf$ is of characteristic different from $2$. 
Every such torus is of the form 
\[
T_a := \{(x,y) |: x^2 - ay^2 = 1\} 
\]
for $a\in \baf^\times$, and two such tori $T_a$ and $T_{a'}$ are isomorphic if and only if $a \equiv a' \mod (\baf^\times)^2$. 
The character lattice of $T_a$ is then a single copy of $\ZZ$ with the action of the Galois group being the inflation of the sign representation of $C_2$ along the map $\GalG \to C_2$, $\gamma \mapsto \frac{\gamma(\sqrt{a})}{\sqrt{a}}$. In other words, it is the homomorphism that classifies the quadratic extension $\baf[\sqrt{a}]/\baf$. 

Since all rank $1$ tori over $\baf$ can be written as affine quadrics, the computation of their $K$-theory is a (very) special case of Swan's computation of the $K$-theory of projective and affine quadratic hypersurfaces \cite{swan1985k}. 

\begin{cor}[\cite{swan1985k}]
Let $\baf$ be a field of characteristic different from $2$, $a\in \baf^\times$ and set 
\[T_a \coloneq \Spec{\baf[x,y]/(x^2-ay^2 -1)}.\] 
We have an equivalence of spectra
\[
K(T_a) \simeq K(\baf)\oplus \mathrm{cofib}(K(\baf[\sqrt{a}])\xrightarrow{\rho} K(\baf)),
\]
where $\rho$ is the map induced by restriction of scalars
along the field extension $\baf \to \baf[\sqrt{a}]$.
\end{cor}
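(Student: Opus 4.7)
The plan is to apply \Cref{cor: K theory as fixed points} to the rank one torus $T_a$ and to unwind the resulting formula by an explicit cellular analysis. When $a$ is a square one has $T_a\simeq\GG_{m,F}$ and the claim reduces to Quillen's theorem, so from now on I assume that $a$ is not a square. Then the character lattice $\cow{T_a}$ is $\ZZ$ with the absolute Galois group $\GalG$ acting through the quotient $C_2=\mathrm{Gal}(F[\sqrt{a}]/F)$ by sign, and the whole situation is inflated from $C_2$. The topological mirror $\topTor{T_a}=\RR/\ZZ$ equipped with the involution $x\mapsto -x$ has two fixed points, and choosing one of them as basepoint identifies it equivariantly with the representation sphere $S^\sigma$ of the sign representation $\sigma$. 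This gives a splitting
\[ \Sigma^\infty_+\topTor{T_a}\simeq \SS\oplus S^\sigma \]
of genuine $C_2$-spectra.

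Tensoring with $K_{C_2}(F)$ and taking $C_2$-fixed points, \Cref{cor: K theory as fixed points} already produces the direct summand $K(F)$ in the desired formula,
\[ K(T_a) \simeq K(F) \oplus (K_{C_2}(F)\otimes S^\sigma)^{C_2}, \]
so it remains to identify the second summand with $\mathrm{cofib}(\rho)$. For this I would use the standard cofibre sequence of pointed $C_2$-spectra
\[ (C_2)_+\too S^0\too S^\sigma, \]
in which the first map collapses the free orbit to the non-basepoint of $S^0$. Tensoring with $K_{C_2}(F)$ and taking $C_2$-fixed points, which is exact on cofibre sequences of genuine $C_2$-spectra, yields a cofibre sequence
\[ (K_{C_2}(F)\otimes (C_2)_+)^{C_2}\too K(F)\too (K_{C_2}(F)\otimes S^\sigma)^{C_2}. \]

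To finish, it remains to identify the leftmost term with $K(F[\sqrt{a}])$ and the connecting map with $\rho$. By the Wirthm\"uller isomorphism, $(K_{C_2}(F)\otimes(C_2)_+)^{C_2}$ is naturally equivalent to the underlying spectrum $K_{C_2}(F)^e=K(F[\sqrt{a}])$, and the first map becomes the transfer associated to the orbit map $C_2\to \ast$. Unwinding the construction of the genuine $C_2$-spectrum $K_{C_2}(F)$ from \Cref{defn: G equiv K theory}, this transfer is precisely the restriction-of-scalars map $\rho\colon K(F[\sqrt{a}])\to K(F)$ along the finite extension $F\subseteq F[\sqrt{a}]$. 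The only non-formal step is this identification of the transfer with $\rho$; I expect it to follow cleanly from the construction of $K_G(-)$ in genuine $G$-spectra, but it is the main piece of $G$-equivariant structure that needs to be pinned down.
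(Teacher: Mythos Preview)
Your argument is correct and is essentially the same approach as the paper's, just organized a little differently. Both proofs identify the topological mirror with $S^1$ carrying the reflection $C_2$-action and compute via its equivariant cell structure; the only difference is that you first split off the fixed basepoint to get $\Sigma^\infty_+\topTor{T_a}\simeq \SS\oplus S^\sigma$ and then use the standard cofibre $(C_2)_+\to S^0\to S^\sigma$, whereas the paper keeps both fixed $0$-cells and the free pair of $1$-cells together in a single cofibre sequence $\GalG/\GalG_0\otimes\SS_\GalG\to \SS_\GalG\oplus\SS_\GalG\to \SS_\GalG[\topTor{T_a}]$ and only afterwards performs the change of coordinates $(a,b)\mapsto(a,a+b)$ to split off the $K(F)$ summand. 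Your packaging avoids that final coordinate change. The identification of the transfer with restriction of scalars~$\rho$, which you correctly flag as the one nonformal point, is exactly what the paper invokes as well (``By definition of $K^F$, this map is given by restriction of scalars''), and it is built into the Mackey-functor structure of $K_\GalG(F)$ from \Cref{defn: G equiv K theory}.
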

\begin{proof}
Let $\GalG$ denote the absolute Galois group of $\baf$. 
 The topological mirror $\topTor{T_a}$ is a circle $S^1$, and the action of 
 $\GalG$ on this circle is given by the composition 
\[
\GalG \to C_2 \to \Aut(S^1)
\]
where the first map is the one described above (classifying $\baf[\sqrt{a}]$) and the
second map is the (geniune) action of $C_2$ on $\topTor{T_a}$ by reflection along an axis. 
The space $\topTor{T_a}$ has an equivariant cell structure with two $0$-cells $x$ and $y$ 
(the fixed points of the $C_2$-action) and two 1-cells $e_1$ and $e_2$, the upper and 
lower semicircles, which get switched by the $C_2$-action, see \Cref{figure}.
6
\begin{figure}[hbt!]
\begin{center}
\begin{tikzpicture}[scale=1.4, line cap=round, line join=round]
  \draw (0,0) circle (1);
  \fill (-1,0) circle (1.2pt);
  \fill ( 1,0) circle (1.2pt);
  \node[below] at (-1.1,0) {$x$};
  \node[below] at ( 1.1,0) {$y$};
  \node at (0,  1.2) {$e_1$};
  \node at (0, -1.2) {$e_2$};
  \node at (0.28, 0.1) {$\sigma$};
  \coordinate (A) at (0.2,  0.95);
  \coordinate (B) at (0.2, -0.95);
  \draw[<->, thick]
    (A) .. controls (0.5, 0.4) and (0.5, -0.4) .. (B);
 \draw[dotted](-2,0)--(2,0);
\end{tikzpicture}
\end{center}
\caption{The equivariant cell structure of $\topTor{\t_a}$}
\label{figure}
\end{figure}

We thus have a cell-attachment cofiber sequence 
\[
\GalG/\GalG_0 \otimes \Sph_\GalG  \too \Sph_\GalG \oplus \Sph_\GalG  \too \Sph_\GalG \otimes \topTor{T_a} \in \Sp_{\GalG}. 
\]
Tensoring with the equivariant $K$-theory spectrum of $F$ we get a cofiber sequence 
\[
K_G(F) \otimes \GalG/\GalG_0 \too K_G(F) \oplus K_G(F) \too K_G(F)[\topTor{T-a}] \in \Sp_{\GalG}. 
\]
By \cref{equivariant mainthm}, the $\GalG$-fixed points of the last term are given by $K(T_a)$. Since taking fixed points preserves cofiber sequences and 
\[ (K_{\GalG}(\baf) \otimes \GalG/\GalG_0)^\GalG \simeq (K_G(\baf))^{\GalG_0} 
    \simeq K(\baf[\sqrt{a}]), \]
we obtain a cofiber sequence of $\GalG$-fixed points spectra 
\[ K(\baf[\sqrt{a}]) \too K(F)\oplus K(F) \too K(T_a). \]
It remains to identify the left hand map.\\
This map is induced from the attaching map of the $1$-cells to the $0$-cells. Since the $1$-cell is induced from $\GalG_0$, the data of this attaching map is the same as that of the $\GalG_0$-equivariant attaching map of $e_1$ to $\{x\} \sqcup \{y\}$. This is the attaching map of an interval to its endpoints, so the map $\Sph \to \Sph \oplus \Sph$ we get is $(1,-1)$. Passing back to the $\GalG$-equivariant maps, we deduce that the first map is given by 
\[
(\tr_{\GalG_0}^\GalG, - \tr_{\GalG_0}^\GalG)\colon K(\baf[\sqrt{a}]) \to K(F), 
\]
where $\tr^{\GalG_0}_\GalG$ is the transfer map from the $\GalG$-fixed points to the $\GalG$-fixed points. By definition of $K^F$, this map is given by restriction of scalars along $\baf[\sqrt{a}] \to \baf$. Finally, performing the change of coordinates $(a,b)\mapsto (a,a+b)$ on the middle term, we see that we have a cofiber sequence 
\[ K(\baf[\sqrt{a}]) \oto{(\mathrm{res}^{\baf[\sqrt{a}]}_{\baf},0)} K(F)\oplus K(F)
\too K(T_a), \]
which proves the claim.
\end{proof}

\printbibliography{}
\end{document}